\newcommand{\R}{{\mathbb{R}}}
\newcommand{\Str}{S}
\newcommand{\ystar}{\mathbf y_*}
\newcommand{\C}{\mathbf c}
\newcommand{\LL}{\mathbf L}
\newcommand{\F}{\mathbf f}
\newcommand{\q}{\hat{\mathbf q}}
\newcommand{\qplain}{{\mathbf q}}
\newcommand{\qt}{\tilde{\mathbf q}}
\newcommand{\Fi}{f}
\newcommand{\Ci}{c}
\newcommand{\Li}{L}
\newcommand{\diffsign}{{\scriptscriptstyle \circ}}
\newcommand{\Fd}{\mathbf f_{\diffsign}}
\newcommand{\Ld}{\mathbf L_{\diffsign}}
\newcommand{\qd}{\mathbf q_{\diffsign}}
\newcommand{\Rd}{R^{\diffsign}_{\scriptscriptstyle x}}
\newcommand{\Sd}[1][c(x)]{S^{\diffsign}_{\scriptscriptstyle #1}}
\newcommand{\Fp}{\mathbf f}
\newcommand{\Lp}{\mathbf L}
\newcommand{\Cp}{\mathbf c}
\newcommand{\rp}{\mathbf r}
\newcommand{\ssp}{\mathbf s}
\newcommand{\Rp}[1][x]{R_{#1}}
\newcommand{\Sp}[1][c(x)]{S_{#1}}
\newtheorem{theo}{Theorem}[section]
\newtheorem{mydef}[theo]{Definition}
\newtheorem{prop}[theo]{Proposition}
\newtheorem{lemma}[theo]{Lemma}
\newtheorem{remark}[theo]{Remark}
\newtheorem{assumption}[theo]{Assumption}
\title{An SQP method for equality constrained optimization on manifolds }
\author{Anton Schiela \& Julian Ortiz}
\begin{document}

\maketitle 
\begin{abstract}
We extend the class of SQP methods for equality constrained optimization to the setting of differentiable manifolds. The use of retractions and stratifications allows us to pull back the involved mappings to linear spaces. We study local quadratic convergence to minimizers. 
In addition we present a composite step method for globalization based on cubic regularization of the objective function and affine covariant damped Newton method for feasibility. We show transition to fast local convergence of this scheme.
We test our method on equilibrium problems in finite elasticity where the stable equilibrium position of an inextensible transversely isotropic elastic rod under dead load is sought.

 \vspace{2ex}
  \noindent\textbf{AMS MSC 2000}:  49M37, 90C55, 90C06
  \vspace{2ex}

  \noindent\textbf{Keywords}: equality constrained optimization, variational problems,
  optimization on manifolds
\end{abstract}

\section{Introduction}

In an important variety of fields, optimization problems benefit from a formulation on nonlinear manifolds. Problems in numerical linear algebra like invariant subspace computations, or low rank 
approximation problems can be tackled using this approach, these problems are the focus of  \cite{absil2009optimization}. Beyond that, plenty of variational problems are posed on infinite dimensional manifolds. 
Nonlinear partial differential equations where the configuration space is 
given by manifolds are found in many applications, for example in liquid crystal physics \cite{prost1995physics} and micro-magnetics \cite{alouges1997new,alouges2012convergent,bartels2007constraint,kritsikis2014beyond,lin1989relaxation}. 
Further are Cosserat materials \cite{badur1989influence} where configurations are maps into the 
space $\mathbb{R}^3\times SO(3)$ which are particularly relevant for shell and rod mechanics.
Similar insights have been successfully
exploited in the analysis of finite strain elasticity and elastoplasticity \cite{ball2002some,mielke2002finite}. 
Further applications of fields with nonlinear codomain are models of topological solitions \cite{manton2004topological}, image processing \cite{tang2000diffusion}, and the
treatment of diffusion-tensor imaging \cite{pennec2006riemannian}. Mathematical literature can be found in \cite{shatah2000geometric} on geometric wave maps, or  \cite{eells1978report} on harmonic
maps. Finally, shape analysis \cite{bauer2010sobolev,ring2012optimization} and shape optimization \cite{schulz2014riemannian} benefits from taking into account the manifold structure of shape spaces. In coupled problems, mixed formulations, or optimal control of the above listed physical models, additional equality constraints occur, and thus one is naturally led to equality constrained optimization on manifolds. 

Unconstrained optimization on manifolds is by now well established, as can be seen in \cite{absil2009optimization,luenberger1972gradient,tang2000diffusion}, where the theory of optimization is covered.  Many things run in parallel to algorithmic approaches on linear spaces. In particular, local (usually quadratic) models are minimized at the current iterate, giving rise to the construction of the 
next step. The main difference between optimization algorithms on a manifold and on linear spaces is how to update the iterates for a given search direction. If the manifold is linear, its tangent
space coincides with the manifold itself and the current iterate can be added to the search direction to obtain the update. If the manifold is nonlinear, the additive update has to be replaced by a 
suitable generalization, a so called \emph{retraction} (cf. e.g. \cite{absil2009optimization}).  Based on these ideas, many algorithms of unconstrained optimization have been carried over to Riemannian manifolds, and 
have been analysed in this framework \cite{huper2004newton,luenberger1972gradient}.
In general, the use of nonlinear retractions enables to exploit given nonlinear problem structure within an optimization algorithm. 

 However, up to now  not much research has been conducted on the construction of algorithms for (equality) constrained optimization on manifolds. A work in the field of shape optimization considers Lagrange-Newton methods on vector bundles \cite{schulz2015towards}. An SQP method for problems on manifolds with inequality constraints was proposed and applied to a problem in robotics, but not analysed in~\cite{Brossette2018}.
 Recently, first order optimality conditions have been derived for equality and inequality constrained optimization on manifolds~\cite{HerzogBergmann2019}. 
  In~\cite{BoumalLiu2019} the extension of known algorithms for constrained optimization to the case where the domain is a manifold has been discussed, which works, if the target space of the constraint mappings is a linear space. The authors consider approaches which allow a reformulation of constrained problems as unconstrained problems on manifolds, such as exact penalty and augmented Lagrangian methods. 
  
\paragraph{Overview.} The main subject of this work is the construction and analysis of SQP methods for equality constrained optimization on manifolds. As our general problem setting we consider smooth Hilbert manifolds $X$ and $Y$ and the problem
\begin{align}\label{eq:mainProblem}
\min_{x\in X}f(x) \,\,\, s.t. \,\,\, c(x)=y_*.
\end{align}
Here $f:X\to \mathbb{R}$ is a twice continuously differentiable functional. The twice continuously differentiable operator $c:X\to Y$  maps from the domain $X$  to the target manifold $Y$, and $y_*\in Y$ is the target point. If $Y$ was a linear space, we could bring $y_*$ to the left hand side and consider the classical constraint $\tilde c(x) := c(x)-y_*=0$. To the best knowledge of the authors, SQP methods have not been considered and analysed in such a setting, so far. 

To define SQP methods for this class of problems we use the popular concept of \emph{retractions}, which map the tangent bundle of a manifold back to the manifold (cf. \cite{absil2009optimization}). As usual, we need a retractions $R:T_x X\to X$ on the domain $X$, but in addition, also the target manifold $Y$ requires, as we will see, a mapping in the other direction $\Str_y :Y \to T_yY$, which we call a \emph{stratification}. 
In this way we pull back both the objective $f$ and the constraint mapping $c$, near a given iterate $x$, to the linear spaces $T_xX$ and $T_{c(x)}Y$. Now, just as in the case of SQP methods on vector spaces, a linear-quadratic model of these pullbacks can be constructed, and a trial correction $\delta x\in T_x X$ can be computed. In Section~\ref{sec:LocalSQP} we will elaborate this approach. In Section~\ref{sec:convergence} we show local quadratic convergence of a general SQP method under suitable assumptions.
Particular emphasis is given to characterize a class of retractions that is non-degenerate close to a local solution. 
Our local theory is based on quanitative assumptions, which can be estimated a-posteriori, and thus provides a theoretical basis for a globalization scheme in the spirit of~\cite{deuflhard2011newton}. 

In the following sections we will consider issues of globalization. Classical globalization concepts are not applicable directly to~\eqref{eq:mainProblem} since evaluation of residuum norms $\|c(x)\|$ is obviously not well defined on a manifold $Y$. A well established algorithmic paradigm that replaces evaluation of residual norms in a natural way is the class of affine covariant Newton methods as discussed in \cite{deuflhard2011newton}. These algorithms dispense with evaluation of norms in the target space and evaluate norms of Newton corrections, instead. 
The concept of affine covariance was carried over from Newton methods to constrained optimization in an \emph{affine covariant composite step method} \cite{lubkoll2017affine} which was used to solve optimal control problems, involving highly nonlinear partial differential equations, such as finite strain elasticity \cite{lubkoll2014optimal}. In this paper we will extend this method to the class of problems, described in \eqref{eq:mainProblem}. 

In this context, we will discuss transition to fast local convergence. 
The subtle issue that arises is, whether the employed globalization strategy can accept full Lagrange-Newton steps, asymptotically. Failure to do so may result in slow local convergence, a behavior that is well known as the \emph{Maratos-effect}. We will study this effect in general for optimization on manifolds and then show that the proposed algorithm does not suffer from this effect. 

Finally, we apply our algorithm to a well known variational problem that is posed on a nonlinear (Hilbert)-manifold: a simple model for an inextensible elastic rod. This serves to illustrate the practical issues of implementation and to demonstrate the feasibility of our approach. 

\section{Local quadratic models and SQP-steps on manifolds}\label{sec:LocalSQP}

In the following, we will consider optimzation problems of the form~\eqref{eq:mainProblem} on smooth Hilbert manifolds $X$ and $Y$ of class $C^2$ (called ``manifolds'' in the following) modelled over Hilbert spaces $\mathbb X$ and $\mathbb Y$. Hilbert manifolds are Banach manifolds, as defined e.g. in \cite[Ch.II]{lang2001fundamentals}, with the special structure that  the domains of their charts are open subsets of Hilbert spaces.  

In the following $T_x X$ denotes the tangent space of $X$ at $x\in X$  
\cite[Ch.II, \S 2]{lang2001fundamentals} which inherits the structure of a Hilbert space from the manifold. 
We denote its zero element $0_x$ and identity mapping $id_{T_x X} : T_xX \to T_x X$. The tangent bundle is denoted by $\pi : TX\to X$ with $T_x X =\pi^{-1}(x)$. We identify $T_{0_x}T_xX\simeq T_xX$, which is possible, because the zero-section of $TX$ can be identified canonically with $X$. Similarly $T_y Y$ is the tangent space of $Y$ at $y\in Y$.

On each tangent space $T_xX$ we introduce a scalar product $\langle \cdot,\cdot,\rangle_x$, which may, but need not necessarily, be defined by a Riemannian metric on $X$, such that $(T_xX,\langle \cdot,\cdot \rangle_x)$ is a Hilbert space, whose topology is compatible with the topology of $X$. By $B^x_\rho:=\{ w\in T_x X : \|w\|_x<\rho\}$ we denote open balls in $T_x X$ around $0_x$. 
We denote the corresponding Riesz-isomorphism by
\begin{align*}
 M_x : T_x X &\to T_x X^*\\
                    v &\mapsto \langle v,\cdot\rangle_x.
\end{align*}
As usual, we have an induced norm $\|v\|_x := \sqrt{\langle v,v\rangle_x}$
and an induced dual norm:
\[
 \|\ell\|_{x,*} := \sup_{v\neq 0} \frac{\ell(v)}{\|v\|_x}=\|M_x^{-1}\ell\|_x. 
\]
Any further structure, like a Riemannian metric or a covariant derivative is not needed for our purpose. 
\subsection{Pull-backs via retractions and stratifications}

Following the approach in~\cite{absil2009optimization} we will introduce the concept of a \emph{retraction} $R_x : T_x X \to X$, which is widely used in unconstrained optimization on manifolds. For the co-domain $Y$ of the constraint mapping $c$ we also  need mappings $\Str_y :Y \to T_y Y$ in the other direction, which we call \emph{stratifications}. On Riemannian manifolds, the exponential map $\exp_x : T_x X \to X$ and the logarithmic map $\log_y : Y\to T_y Y$ are canonical examples.  

\begin{mydef}\label{def:retraction}
 Let $V_{0_x}\subset T_x X$ be a neighbourhood of $0_x \in T_x X$. 
A $C^2$-mapping $R_x : V_{0_x} \to X$ is called a (local) retraction at $x$, if it fulfills:
\begin{itemize}
\item[i)] $R_x(0_x)=x$.
\item[ii)]  $T_{0_x}R_x=id_{T_xX}$.
\end{itemize}
If $V_{x_0}=T_x X$, we call $R_{x}$ globally defined on $T_x X$. 
\vspace{0.2cm}

 Let $U_{y}\subset Y$ be a neighbourhood of $y\in Y$. 
A $C^2$-mapping $\Str_y : U_y \to T_yY$ is called a (local) stratification at $y$, if it fulfills:
\begin{itemize}
\item[i)] $\Str_y(y)=0_y$.
\item[ii)] $T_y \Str_y = id_{T_y Y}$.
\end{itemize}  
If $U_y = Y$, we call $\Str_y$ globally defined on $Y$. 

\end{mydef} 
\begin{remark}
 The original definition of retractions as given e.g. in \cite{absil2009optimization} assumes that each $R_x$ is globally defined. Then $R : TX\to X$ is defined as a smooth map from the tangent bundle to the manifold with the properties, listed above. Be aware that $TR: TTX \to TX$ is already a rather complicated object.  These additional assumptions are, however, not essential for the following and is thus not imposed here. In Section 3 we will discuss regularity assumptions on $R$ with respect to perturbations on $x$ and their relation to smoothness, which will turn out to be sufficient but not necessary for local superlinear convergence. 
\end{remark}
By the inverse mapping theorem  $R_x$ is a local diffeomorphism, 
which means that there is a neighbourhood $V_x^i \subset V_x$, such that
$R_x : V_x^i \to R_x(V_x^i)$ is invertible with continously differentiable inverse $R_x^{-1}$ and:
\[
T_x(R_x^{-1})=(T_{0_x} R_x)^{-1}=id_{T_x X}. 
\]
Thus, locally, the inverse of a retraction $R_x : T_x X \to X$ yields a stratification $\Str_x := R^{-1}_x : U_x \subset X\to T_x X$ and vice versa. 
It also follows that a subset $U_{x}\subset X$ is a neighbourhood of $x$, if and only if there is $\rho >0$, such that $R_{x}(B^{x}_\rho) \subset U_{x}$.

\paragraph{Pullback of a constrained problem.} We define pullbacks of the cost functional $f:X\to \R$ at $x\in X$ via a retraction $R_x$: 
\begin{align*}
\F:T_{x}X&\to \mathbb{R}\\
\F(\delta x)&=(f\circ R_{x})(\delta x)
\end{align*}
and of the constraint $c(x)=y_*$ at $x$ via a retraction $R_x$ and a stratification $\Str_{c(x)}$: 
\begin{align*}
\C:T_{x}X &\to T_{c(x)}Y\\
  \C(\delta x) &:= \Str_{c(x)} \circ c \circ R_{x}(\delta x)-\Str_{c(x)}(y_*).
\end{align*}
In this way, we can now define the pullback of \eqref{eq:mainProblem} to tangent spaces:
\begin{align}\label{eq:pullback_problem}
 \min_{\delta x\in T_x X} \F(\delta x) \quad \mbox{ s.t.}\quad \C(\delta x)=0_{c(x)}, \qquad \C : T_x X \to T_{c(x)} Y.
\end{align}
\begin{prop}
 An element $x_*\in X$ is a local minimizer of~\eqref{eq:mainProblem} if and only if $0_{x_*}$ is a local minimizer of its pullback:
 \begin{equation}\label{eq:pullbackatmin}
  \min_{\delta x \in T_{x_*} X} \F(\delta x)\quad \mbox{ s.t. } \quad \C(\delta x)=0_{c(x_*)}  
 \end{equation}
 \end{prop}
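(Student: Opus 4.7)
The plan is to establish a local diffeomorphic correspondence between feasible points near $x_*$ in $X$ and feasible tangent vectors near $0_{x_*}$ in $T_{x_*}X$, under which the values of $f$ and $\F$ agree. Since local optimality is a purely neighborhood-based notion, both implications of the equivalence then drop out simultaneously.

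First I would fix neighborhoods. By Definition~\ref{def:retraction} combined with the inverse mapping argument recalled after it, $R_{x_*}$ is a local $C^2$-diffeomorphism from some open $V\subset T_{x_*}X$ containing $0_{x_*}$ onto an open neighborhood $U:=R_{x_*}(V)$ of $x_*$ in $X$, with $R_{x_*}(0_{x_*})=x_*$. Similarly $\Str_{c(x_*)}$ is a local $C^2$-diffeomorphism on some neighborhood $W$ of $c(x_*)=y_*$, with $\Str_{c(x_*)}(y_*)=0_{c(x_*)}$ by Definition~\ref{def:retraction}. Using continuity of $c\circ R_{x_*}$ at $0_{x_*}$, shrink $V$ (and hence $U$) so that $c(R_{x_*}(V))\subset W$; this is possible because $c(R_{x_*}(0_{x_*}))=c(x_*)=y_*\in W$.

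Next I would translate feasibility. For $\delta x\in V$, since $\Str_{c(x_*)}(y_*)=0_{c(x_*)}$ the pullback constraint reads
\[
 \C(\delta x)=\Str_{c(x_*)}(c(R_{x_*}(\delta x)))=0_{c(x_*)}.
\]
Because $\Str_{c(x_*)}$ is injective on $W$ and $c(R_{x_*}(\delta x))\in W$, this equality holds iff $c(R_{x_*}(\delta x))=y_*$, i.e.\ iff $x:=R_{x_*}(\delta x)\in U$ satisfies $c(x)=y_*$. Together with $\F(\delta x)=f(R_{x_*}(\delta x))=f(x)$ by definition of the pullback, this yields a bijection (via $R_{x_*}$) between feasible points of \eqref{eq:mainProblem} in $U$ and feasible points of \eqref{eq:pullbackatmin} in $V$, preserving objective values.

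Finally, I would conclude by unpacking the definition of a local minimizer on both sides. If $x_*$ minimizes $f$ over $c^{-1}(y_*)\cap U'$ for some neighborhood $U'\subset U$ of $x_*$, set $V':=R_{x_*}^{-1}(U')\subset V$; this is a neighborhood of $0_{x_*}$, and the bijection above shows $\F(0_{x_*})=f(x_*)\le f(R_{x_*}(\delta x))=\F(\delta x)$ for every feasible $\delta x\in V'$. The reverse direction is symmetric, using $U':=R_{x_*}(V')$. The only subtle point, and the one requiring care, is ensuring the two neighborhoods (where $R_{x_*}$ is a diffeomorphism and where $\Str_{c(x_*)}$ is injective) can be arranged compatibly; this is handled by the shrinking step above via continuity of $c\circ R_{x_*}$ at $0_{x_*}$, and no further regularity of $c$ is required.
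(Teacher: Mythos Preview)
Your proof is correct and follows essentially the same approach as the paper, which dispatches the proposition in a single sentence by invoking local invertibility of retractions and stratifications. You have simply spelled out in detail what the paper leaves implicit: the shrinking of neighborhoods so that $c\circ R_{x_*}$ lands where $\Str_{c(x_*)}$ is injective, the translation of feasibility via that injectivity, and the transport of local minimality through the diffeomorphism $R_{x_*}$.
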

\begin{proof}
 After noting that $c(x_*)=y_*$ is equivalent to $\Str_{c(x_*),i}y_*=0_{c(x_*)}$, the first result follows directly from local invertibility of retractions and stratifications. 
\end{proof}
We can now define a local Lagrangian function via the pullbacks of $f$ and $c$: 
\begin{mydef}
The Lagrangian function of a pullback \eqref{eq:pullback_problem} is given by:
\begin{align}\label{lagrange_par}
\begin{split}
\LL : T_x X \times T_{c(x)}Y^* &\to \R\\
(\delta x,p)&\mapsto \LL(\delta x,p):= \F(\delta x)+{p}\circ \C(\delta x).
\end{split}
\end{align}
\end{mydef}
Thus, we have reduced \eqref{eq:mainProblem} locally to a nonlinear optimization problem on Hilbert spaces, to which techniques of constrained nonlinear optimization on linear spaces can be applied. 
By construction domain and co-domain af $\F$ and $\C$ are Hilbert spaces. Thus, we may to take first and second derivatives of $\F$ and $\C$ in the usual way and obtain bounded linear and bilinear mappings, respectively. Their dependence on the choice of retraction and stratification
will be discussed in the following section.
We will pursue the idea of SQP methods, and thus derive a linearly constrained quadratic model of \eqref{eq:pullback_problem}, which is of the following form:
\begin{align}\label{eq:lin_quad}
 \min_{\delta x\in T_x X} \qplain(\delta x) \quad \mbox{ s.t.}\quad \C'(0_x)\delta x+\C(0_x)=0_y, \qquad \C : T_x X \to T_y Y.
\end{align}
Here $\qplain : T_x X \to \R$ is a quadratic model for 
$\F$, which also will use second order information of the problem. 

\begin{remark}
A different route is taken in \cite{BoumalLiu2019}, where augmented Lagrangian and penalty methods are considered, which transform a constrained problem to an unconstrained problem on manifolds. Then techniques of unconstrained optimization on manifolds are applied.  
\end{remark}

\paragraph{Invariance with respect to retractions and stratifications.}

In differential geometry, quantities which are invariant with respect to changes of charts enjoy particular attention. In this spirit it is natural to ask, which parts the local model \eqref{eq:lin_quad} are invariant against a change of retractions and stratifications. This will turn out to be very useful for the rest of this work. 

Consider a pair $(R_{x,1},R_{x,2})$ of smooth retractions at $x\in X$ and their transition mapping, which is defined on a neighbourhood $V_x$ of $0_x$:
\begin{align*}
 \Phi_x &:= R_{x,1}^{-1}\circ R_{x,2} : V_x \to T_x X.
\end{align*}
Similarly, consider a pair $(\Str_{y,1},\Str_{y,2})$ of smooth stratifications at $y=c(x)\in Y$
and a local transition mapping on a neighbourhood $V_y$ of $0_y$:
\begin{align*}
 \Psi_y &:= \Str_{y,2}\circ \Str_{y,1}^{-1}: V_y \to T_y Y.
\end{align*}
Our first observation is that any two retractions and stratifications coincide up to first order at the origin. Indeed, by the chain rule, we compute:
\begin{align*}
\Phi_x'(0_x)&=(T_{x}R_{x,1}^{-1})\,T_{0_x}R_{x,2}=id_{T_x X},\\
\Psi_y'(0_y)&=(T_{y}\Str_{y,2})\,T_{0_y}\Str_{y,1}^{-1}=id_{T_y Y}.
\end{align*}
As a consequence, there are a couple of quantities, which are invariant against a change of retractions and stratifications.
Defining $\F_i$ and $\C_i$ as the pullbacks of $f$ and $c$ via $R_{x,i}$ and $\Str_{y,i}$, respectively, we  obtain invariance of first derivatives:
\begin{align*}
 \F_2=\F_1\circ \Phi_x \qquad\; \quad &\Rightarrow \quad \F_2'(0_x)=\F_1'(0_x)\Phi_x'(0_x)=\F_1'(0_x)\\
 \C_2=\Psi_y\circ \C_1\circ \Phi_x \quad &\Rightarrow \quad \C_2'(0_x)=\Psi_y'(0_y)\C_1'(0_x)\Phi_x'(0_x)=\C'_1(0_x)
\end{align*}
Due to these observations and to stress invariance, we use non-bold notation:
\begin{align*}
 \Fi(0_x):=\F_1(0_x)=\F_2(0_x),\qquad \qquad \;\; \Fi'(0_x)&:=\F_1'(0_x)=\F_2'(0_x),\\
\Ci'(0_x)&:=\C'_1(0_x)=\C_2'(0_x).
\end{align*}
The definition of $\C_i(0_x)$ involves $\Str_{y,i}(y_*)$. Thus, $\C_i(0_x)$ depends on $\Str_{y,i}$ unless $y= y_*$, but not on $R_{x,i}$. 
To study invariance of the Lagrangian function under changes of retractions, we compute:
\begin{align}\label{change-chart-Lag}
\begin{split}
 \LL_2(\delta x,{p}) &= \F_2(\delta x)+{p}\circ \C_2(\delta x)
 = \F_1\circ \Phi_x(\delta x)+{p}\circ\Psi_y\circ \C_1 \circ \Phi_x(\delta x)\\
 &= \LL_1\circ \Phi_x(\delta x)+{p}\circ (\Psi_y-id_{T_y Y})\circ \C_1 \circ \Phi_x(\delta x).
\end{split}
 \end{align}
Differentiating this identity we obtain invariance of first derivatives (we write $p\Ci'(0_x)$ for the composition $p\circ \Ci'(0_x)$ of linear maps):
 \begin{align*}
 \LL'_2(0_x,{p})=\LL_1'(0_x,{p})=\Fi'(0_x)+p\Ci'(0_x)=:\Li'(0_x,{p}).
\end{align*}
For the second derivative of the Lagrangian, however, we observe a crucial discrepancy:
\begin{lemma}\label{lem:differenceOfLxx}
\begin{equation}\label{eq:differenceOfLxx}
\begin{split}
 (\LL_2''(0_x,{p})&-\LL_1''(0_x,{p}))(v,w)
 =\Li'(0_x,{p})\Phi_x''(0_x)(v,w)+p\,\Psi_y''(0_y)(\Ci'(0_x)v,\Ci'(0_x) w).
 \end{split}
\end{equation}
\end{lemma}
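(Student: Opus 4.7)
The plan is to differentiate identity \eqref{change-chart-Lag} twice in $\delta x$ and evaluate at $0_x$, using only the second-order chain rule
\[
 (F\circ G)''(a)(v,w) = F''(G(a))(G'(a)v,G'(a)w) + F'(G(a))G''(a)(v,w)
\]
together with the normalisations $\Phi_x(0_x)=0_x$, $\Phi_x'(0_x)=id_{T_xX}$, $\Psi_y(0_y)=0_y$ and $\Psi_y'(0_y)=id_{T_yY}$ already established in the preceding paragraphs.

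First I would process the summand $\LL_1\circ\Phi_x$. The chain rule with $\Phi_x'(0_x)=id_{T_xX}$ gives
\[
 (\LL_1\circ\Phi_x)''(0_x,p)(v,w)=\LL_1''(0_x,p)(v,w)+\LL_1'(0_x,p)\Phi_x''(0_x)(v,w),
\]
and the invariance of the first derivative, $\LL_1'(0_x,p)=\Li'(0_x,p)$, converts the second term into the form demanded by \eqref{eq:differenceOfLxx}. Moving $\LL_1''(0_x,p)(v,w)$ to the left-hand side then recovers the first summand of the claim.

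Next I would handle the correction term $p\circ(\Psi_y-id_{T_yY})\circ \C_1\circ \Phi_x$. Setting $h:=\C_1\circ\Phi_x$, we have $h'(0_x)=\Ci'(0_x)$ by the same argument as in the first-derivative invariance computation. A subtlety here is the value of $h(0_x)$: the raw quantity $\C_1(0_x)$ equals $-\Str_{c(x),1}(y_*)$, so one must verify that $\Psi_y$ and its second derivative end up evaluated at $0_y$. The cleanest way is to note that the constant offset $\Str_{c(x),1}(y_*)$ is annihilated by the second derivative, so up to such constants one may replace $\C_1$ by the unshifted pullback $\Str_{c(x),1}\circ c \circ R_{x,1}$, which indeed maps $0_x$ to $0_y$. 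The chain rule applied to $(\Psi_y-id_{T_yY})\circ h$ at $0_x$ then delivers
\[
 \Psi_y''(0_y)(\Ci'(0_x)v,\Ci'(0_x)w) + \bigl(\Psi_y'(0_y)-id_{T_yY}\bigr)\,h''(0_x)(v,w),
\]
whose second summand vanishes because $\Psi_y'(0_y)=id_{T_yY}$; composing with $p$ yields the remaining term of \eqref{eq:differenceOfLxx}.

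The main obstacle I expect is precisely this base-point bookkeeping for $\Psi_y$, which is easy to overlook because the identity \eqref{change-chart-Lag} hides the translation $\Str_{c(x),1}(y_*)$ inside the $\C_i$'s. Once that is settled, the rest is routine: summing the two contributions, recognising $\Fi'(0_x)+p\Ci'(0_x)=\Li'(0_x,p)$, and rearranging gives the claimed identity \eqref{eq:differenceOfLxx} at once.
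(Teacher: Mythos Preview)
Your argument is correct and is essentially the paper's own proof, just organised slightly differently: the paper differentiates $\F_2=\F_1\circ\Phi_x$ and $\C_2=\Psi_y\circ\C_1\circ\Phi_x$ separately to obtain \eqref{change-chart-2der-Lag} and then adds, whereas you differentiate the combined identity \eqref{change-chart-Lag} directly, which makes $\Li'(0_x,p)$ appear at once and renders your final ``recognising $\Fi'+p\Ci'=\Li'$'' step unnecessary. Your observation about the base point of $\Psi_y''$---resolved by passing to the unshifted pullback $\Str_{c(x),1}\circ c\circ R_{x,1}$, which does map $0_x$ to $0_y$---is well taken and in fact supplies a detail the paper leaves implicit.
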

\begin{proof}
We compute by the chain rule and $\Phi_x'(0_x)=id_{T_x X}, \Psi_x'(0_y)=id_{T_y Y}$:
\begin{align}\label{change-chart-2der-Lag}
\begin{split}
 \F_2''(0_x)(v,w) - \F_1''(0_x)(v,w)&=\Fi'(0_x)\Phi_x''(0_x)(v,w)\\
 \C_2''(0_x)(v,w) -\C_1''(0_x)(v,w)
 &=\Psi_y''(0_y)(\Ci'(0_x)v,\Ci'(0_x) w)+\Ci'(0_x)\Phi_x''(0_x)(v,w).
\end{split}
 \end{align}
Adding $\F_i''(0_x)$ and $p\C''_i(0_x)$ yields the desired result. 
 \end{proof} 
We observe that the quadratic model $\q$ in \eqref{eq:lin_quad} may depend on the choice of retractions and stratifications if it involves $\LL''(0_x,p)$. 
 
 \subsection{First and second order optimality conditions}

As a first illustration of the usefulness of invariance considerations, we derive first and second order optimality conditions for our problem on Hilbert manifolds.
For equality and inequality constrained problems on finite dimensional manifolds, where the target spaces of the constraint mappings are linear, \cite{HerzogBergmann2019} have shown first order optimality conditions by different techniques.

 \begin{lemma}\label{lem:StratRet}
 Let $\C : T_x X\to T_{c(x)} Y$ be the pullback of $c:X\to Y$ at $x$ via $R_{x}, \Str_{c(x)}$.
 Assume that $\Ci'(0_x)$ is surjective. Then there is a local retraction $R_{x,0}$, such that
 \begin{equation}\label{eq:stratretr}
   \C_0(\delta x)=\C_0(0_x)+\Ci'(0_x)\delta x  \quad \forall \delta x\in U.
 \end{equation}
\end{lemma}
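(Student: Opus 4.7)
The natural idea is to construct $R_{x,0}$ by composing the given retraction with a local change of variables in the tangent space that ``absorbs'' the nonlinearity of $\C$. Concretely, I look for a local $C^2$-diffeomorphism $\Phi: T_x X \to T_x X$ with $\Phi(0_x)=0_x$ and $\Phi'(0_x)=id_{T_xX}$; then $R_{x,0}:=R_x\circ\Phi$ is automatically a retraction (it is $C^2$, fixes $x$, and its tangent at $0_x$ is the identity), and by the chain rule it yields the new pullback $\C_0=\C\circ\Phi$. Therefore the task reduces to producing $\Phi$ such that
\begin{equation*}
\C(\Phi(\delta x))=\C(0_x)+\Ci'(0_x)\delta x
\end{equation*}
on a neighbourhood of $0_x$; since $\Phi(0_x)=0_x$, evaluating at $0_x$ gives $\C_0(0_x)=\C(0_x)$, so the identity above is exactly \eqref{eq:stratretr}.

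Because we are in a Hilbert setting and $\Ci'(0_x)$ is surjective, the open mapping theorem yields the orthogonal splitting $T_xX=\ker\Ci'(0_x)\oplus W$ with $W:=(\ker\Ci'(0_x))^\perp$, and the restriction $\Ci'(0_x)|_W:W\to T_{c(x)}Y$ is a topological isomorphism. Write each $\delta x\in T_xX$ uniquely as $\delta x=u+w$ with $u\in\ker\Ci'(0_x)$, $w\in W$, and note $\Ci'(0_x)\delta x=\Ci'(0_x)w$. I look for $\Phi$ of the form $\Phi(u+w)=u+h(u,w)$ with unknown $h(u,w)\in W$ solving
\begin{equation*}
F(u,w,h):=\C(u+h)-\C(0_x)-\Ci'(0_x)w=0_{c(x)}.
\end{equation*}
At $(u,w,h)=(0,0,0)$ one has $F=0$ and $\partial_h F(0,0,0)=\Ci'(0_x)|_W$, which is an isomorphism. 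Since $\C$ is $C^2$, the implicit function theorem produces a $C^2$-map $h(u,w)$ on a neighbourhood of $(0,0)$ with $h(0,0)=0$, and differentiating $F(u,w,h(u,w))=0$ at the origin together with injectivity of $\Ci'(0_x)|_W$ (and $\Ci'(0_x)u\equiv 0$) forces $\partial_u h(0,0)=0$ and $\partial_w h(0,0)=id_W$. Consequently $\Phi'(0_x)=id_{T_xX}$, and the inverse function theorem makes $\Phi$ a local $C^2$-diffeomorphism.

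The only genuinely technical point is the application of the implicit function theorem in the Hilbert setting; once the orthogonal decomposition is in place and $\Ci'(0_x)|_W$ is identified as an isomorphism, everything else is a chain-rule verification. The remaining bookkeeping is: (i) checking that $R_{x,0}=R_x\circ\Phi$ is a retraction in the sense of Definition \ref{def:retraction}, which follows from $\Phi(0_x)=0_x$ and $\Phi'(0_x)=id_{T_xX}$; and (ii) observing that on the image of $\Phi$, the defining equation $F\equiv 0$ is literally the affine identity \eqref{eq:stratretr}, possibly after shrinking $U$ so that it is contained in the range of $\Phi$.
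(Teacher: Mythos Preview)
Your proof is correct and follows essentially the same route as the paper. The paper invokes the surjective implicit function theorem (Zeidler, Thm.~4.H) as a black box to obtain a local map $\phi$ with $\C(\phi(\delta x))=\C(0_x)+\Ci'(0_x)\delta x$ and $\phi'(0_x)=id_{T_xX}$, and then sets $R_{x,0}=R_x\circ\phi$; your argument simply carries out the standard proof of that very theorem explicitly, using the orthogonal splitting $T_xX=\ker\Ci'(0_x)\oplus W$ and the ordinary implicit function theorem.
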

\begin{proof}
By the surjective implicit function theorem (cf. e.g. \cite[Thm. 4.H]{ZeidlerI}), which is applicable in Hilbert space, there is a neighbourhood $U$ of $0_x$ and a mapping $\phi :U \to T_x X$, such that $\C(\phi(\delta x))=\C(0_x)+\Ci'(0_x)\delta x$ and $\phi'(0_x)=id_{T_x X}$. Defining 
 $R_{x,0} := \phi\circ R_{x}$ we observe by the chain-rule that $R_{x,0}:U\to T_x X$ is indeed a retraction, locally, which satisfies~\eqref{eq:stratretr} 
\end{proof}

\begin{prop}[First order optimality conditions]
 Let $x_*\in X$ be a local minimizer of~\eqref{eq:mainProblem}, and $\F$, $\C$ be any pullback of $f$ and $c$ at $x_*$. Assume that $\Ci'(0_{x_*})$ is surjective. Then there exists a unique Lagrange-Multiplier $p_*\in T_{c(x_*)}Y^*$ such that:
 \begin{equation}\label{eq:KKTconditions}
  \begin{split}
   \Li'(0_{x_*},p_*)&=0^*_{x_*} \quad\quad \mbox{ in } T_{x_*} X^*\\
  \C(0_{x_*})&=0_{c(x_*)} \quad \mbox{ in } T_{c(x_*)} Y.
  \end{split}
 \end{equation}
\end{prop}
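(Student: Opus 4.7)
The plan is to reduce the problem to a standard Hilbert space Lagrange multiplier argument applied to the pullback problem \eqref{eq:pullbackatmin}, and then invoke the invariance of $\Fi'(0_{x_*})$ and $\Ci'(0_{x_*})$ under the choice of retraction/stratification to conclude that the resulting conditions hold for any pullback.

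First I would dispense with the feasibility condition: by definition of the pullback,
\[
 \C(0_{x_*}) = \Str_{c(x_*)}(c(R_{x_*}(0_{x_*}))) - \Str_{c(x_*)}(y_*) = \Str_{c(x_*)}(c(x_*)) - \Str_{c(x_*)}(y_*) = 0_{c(x_*)},
\]
using $R_{x_*}(0_{x_*})=x_*$ and $c(x_*)=y_*$. Next, by the proposition already established, $0_{x_*}$ is a local minimizer of the pullback problem on the Hilbert space $T_{x_*}X$. The central observation is that, by Lemma~\ref{lem:StratRet}, we may \emph{select} a retraction $R_{x_*,0}$ with respect to which the pullback constraint $\C_0$ becomes \emph{affine}:
\[
 \C_0(\delta x) = \C_0(0_{x_*}) + \Ci'(0_{x_*})\,\delta x, \qquad \delta x \in U.
\]
With this choice, the feasible set of \eqref{eq:pullbackatmin} is locally the affine subspace $\ker \Ci'(0_{x_*}) \subset T_{x_*}X$, and the constraint qualification reduces to surjectivity of $\Ci'(0_{x_*})$, which is the assumption.

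From there, the classical Lagrange multiplier theorem on Hilbert spaces (e.g.\ Lyusternik's theorem or the standard first order KKT theorem for equality constrained optimization, such as in \cite{ZeidlerI}) applies to $\min \F_0$ subject to $\C_0=0_{c(x_*)}$. It yields a (necessarily unique, by surjectivity of $\Ci'(0_{x_*})$ and hence injectivity of the adjoint $p\mapsto p\circ \Ci'(0_{x_*})$) functional $p_*\in T_{c(x_*)}Y^*$ with
\[
 \F_0'(0_{x_*}) + p_*\circ \Ci'(0_{x_*}) = 0^*_{x_*}.
\]
Finally, invoking the invariance identities $\F_0'(0_{x_*})=\Fi'(0_{x_*})$ and $\C_0'(0_{x_*})=\Ci'(0_{x_*})$ established in the previous subsection, this condition reads $\Li'(0_{x_*},p_*)=0^*_{x_*}$ with the same multiplier $p_*$ for \emph{any} choice of pullback $(\F,\C)$; this is precisely~\eqref{eq:KKTconditions}.

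The only non-routine point is the need to pass to the convenient affine pullback before applying the Hilbert space KKT theorem: working with a general retraction, the nonlinear constraint $\C=0_{c(x_*)}$ would still require a surjective implicit function argument to verify the constraint qualification, and Lemma~\ref{lem:StratRet} packages exactly this. Everything else is a transcription of the Hilbert space theory together with the invariance of first derivatives.
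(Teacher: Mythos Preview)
Your proposal is correct and follows essentially the same route as the paper: reduce to the affine-constraint pullback via Lemma~\ref{lem:StratRet}, then apply standard Hilbert space Lagrange multiplier theory and invoke invariance of first derivatives. The only cosmetic differences are that the paper spells out the closed range theorem argument explicitly rather than citing a black-box KKT theorem, and that you additionally verify feasibility and uniqueness (via injectivity of the adjoint), which the paper's proof leaves implicit.
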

\begin{proof}
 Since $\Ci'(0_{x_*})$ is surjective, then \eqref{eq:pullbackatmin} also holds for $R_{x_*,0}$ from Lemma~\ref{lem:StratRet}, so $0_{x_*}$ is the minimizer of
 $\F_0$ on $\ker\,\Ci'(0_{x_*})$. Hence 
 $\Fi'(0_{x_*})v=\F_0'(0_{x_*})v=0$ for all $v\in \ker\,\Ci'(0_{x_*})$,
 so that $-\Fi'(0_{x_*})\in (\ker\,\Ci'(0_{x_*}))^\circ$, the annihilator of $\ker\,\Ci'(0_{x_*})$. Aplication of the  closed range theorem yields $\mathrm{ran}\, \Ci'(0_{x_*})^*=(\ker\,\Ci'(0_{x_*}))^\circ$, so there is an element $p_*\in T_{y_*} Y^*$, such that  $-\Fi'(0_{x_*})=\Ci'(0_{x_*})^*p_*$, or written differently $\Fi'(0_{x_*})+p_* \Ci'(0_{x_*})=0$. 
\end{proof}
 
 \begin{prop}[Second order optimality conditions]
  Assume that $(0_{x_*},p_*)$ satisfy \eqref{eq:KKTconditions} and $\Ci'(0_{x_*})$ is surjective. Then the following implications hold for any pullbacks 
  $\F, \C$ of $f,c$:
  \begin{align}
     \LL''(0_{x_*},p_*)(v,v)&\ge 0 \quad\qquad \forall v\in \ker\,\Ci'(0_{x_*})\quad \Leftarrow \quad x_* \mbox{ is a local minimizer of } \eqref{eq:mainProblem} \label{eq:SNC}    \\    
     \exists \alpha > 0 : \LL''(0_{x_*},p_*)(v,v)&\ge \alpha\|v\|_{x_*}^2 \quad \forall v\in \ker\,\Ci'(0_{x_*})\quad \Rightarrow \quad x_* \mbox{ is a local minimizer of } \eqref{eq:mainProblem}. \label{eq:ellipticSSC}    
  \end{align}
\end{prop}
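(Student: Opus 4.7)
The plan is to exploit Lemma \ref{lem:StratRet} to reduce the problem to classical second-order conditions on a linear subspace, and to use Lemma \ref{lem:differenceOfLxx} to verify that both implications are indeed well-posed for ``any pullbacks.''

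The first step is to observe that although $\LL''(0_{x_*},p_*)$ depends on the choice of retraction and stratification in general, its restriction to $\ker\,\Ci'(0_{x_*})$ at the KKT point is invariant. Indeed, in the difference formula \eqref{eq:differenceOfLxx}, the first term $\Li'(0_{x_*},p_*)\Phi_x''(0_{x_*})(v,w)$ vanishes because $\Li'(0_{x_*},p_*)=0$ by \eqref{eq:KKTconditions}, and the second term $p_*\,\Psi_y''(0_{y_*})(\Ci'(0_{x_*})v,\Ci'(0_{x_*})w)$ vanishes whenever $v,w\in\ker\,\Ci'(0_{x_*})$. Consequently, it is enough to establish both implications for one conveniently chosen pullback; by Lemma \ref{lem:StratRet}, I choose a retraction $R_{x_*,0}$ rendering the pullback constraint affine, so that $\C_0(\delta x)=\Ci'(0_{x_*})\delta x$ (using $\C_0(0_{x_*})=0$ from the KKT feasibility), whence the feasible set of the pullback problem is locally exactly $\ker\,\Ci'(0_{x_*})$ and $\C_0''(0_{x_*})\equiv 0$, i.e.\ $\LL_0''(0_{x_*},p_*)=\F_0''(0_{x_*})$.

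For the necessary condition \eqref{eq:SNC}, I invoke Proposition 3.3 (the equivalence of minimizers under pullback) to conclude that $0_{x_*}$ is a local minimizer of $\F_0$ over $\ker\,\Ci'(0_{x_*})$. For any $v\in\ker\,\Ci'(0_{x_*})$, the curve $t\mapsto tv$ stays feasible, so that $\F_0(tv)\ge\F_0(0_{x_*})$ for small $t>0$. A second-order Taylor expansion of $\F_0$, combined with $\Fi'(0_{x_*})v=-p_*\Ci'(0_{x_*})v=0$, then yields $\F_0''(0_{x_*})(v,v)\ge 0$, which equals $\LL_0''(0_{x_*},p_*)(v,v)$, and the invariance discussed above transports this non-negativity to an arbitrary pullback.

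For the sufficient condition \eqref{eq:ellipticSSC}, assuming coercivity of $\LL''$ on the kernel for some (equivalently any) pullback, I again work with $R_{x_*,0}$. For feasible $\delta x\in\ker\,\Ci'(0_{x_*})$, Taylor expansion around $0_{x_*}$ gives
\[
\F_0(\delta x)=\F_0(0_{x_*})+\Fi'(0_{x_*})\delta x+\tfrac12 \F_0''(0_{x_*})(\delta x,\delta x)+o(\|\delta x\|_{x_*}^2).
\]
The linear term vanishes on the kernel, and $\F_0''(0_{x_*})(\delta x,\delta x)=\LL_0''(0_{x_*},p_*)(\delta x,\delta x)\ge\alpha\|\delta x\|_{x_*}^2$, so $\F_0(\delta x)\ge\F_0(0_{x_*})+(\alpha/2-o(1))\|\delta x\|_{x_*}^2$ for $\delta x$ sufficiently small. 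Hence $0_{x_*}$ is a strict local minimizer of the pullback, and Proposition 3.3 transports this back to $x_*$ in the original problem.

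The only point requiring real care is the invariance argument at the start, which is what makes the ``for any pullbacks'' phrasing meaningful; the remaining Taylor-expansion arguments are entirely classical once the special retraction from Lemma \ref{lem:StratRet} is in hand.
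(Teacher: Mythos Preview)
Your proof is correct and follows essentially the same approach as the paper: reduce to the special retraction $R_{x_*,0}$ from Lemma~\ref{lem:StratRet}, apply a second-order Taylor expansion on the linear feasible set $\ker\,\Ci'(0_{x_*})$, and invoke Lemma~\ref{lem:differenceOfLxx} together with $\Li'(0_{x_*},p_*)=0$ and $\Ci'(0_{x_*})v=0$ to obtain invariance of $\LL''(0_{x_*},p_*)$ on the kernel. The only cosmetic difference is that you front-load the invariance argument and phrase the expansion via $\F_0''=\LL_0''$ (using $\C_0''\equiv 0$), whereas the paper expands $\LL_0$ directly and records invariance at the end; also, your reference to ``Proposition~3.3'' should point to the proposition in Section~2 on equivalence of minimizers under pullback.
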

\begin{proof}
Consider again $R_{x,0}$ from Lemma~\ref{lem:StratRet}, so that the feasible set is $\mathrm{\ker}\,\Ci'(0_{x_*})$. By Taylor expansion we obtain for any $\delta x\in \ker\,\Ci'(0_{x_*})$:
\begin{align*}
 \F_0(\delta x)-\Fi(0_{x_*})&=\LL_0(\delta x,p_*)-\LL(0_{x_*},p_*)=\Li'(0_{x_*},p_*)\delta x+\frac12 \LL_0''(0_{x_*},p_*)(\delta x,\delta x)+o(\|\delta x\|_{x_*}^2)\\
 &= \frac12 \LL_0''(0_{x_*},p_*)(\delta x,\delta x)+o(\|\delta x\|_{x_*}^2).
\end{align*}
Thus, for any $\varepsilon>0$ there is a neighbourhood of $0_{x_*}$, such that
\[
 \frac12 \LL_0''(0_{x_*},p_*)(\delta x,\delta x) -\varepsilon \|\delta x\|_{x_*}^2 \le \F_0(\delta x)-\Fi(0_{x_*})\le \frac12 \LL_0''(0_{x_*},p_*)(\delta x,\delta x) +\varepsilon \|\delta x\|_{x_*}^2. 
\]
Now for $\LL_0''(0_{x_*},p_*)$ \eqref{eq:SNC} follows from the right inequality and \eqref{eq:ellipticSSC} follows from the left inequality. 

Consider an arbitrary $R_x, \Str_{c(x)}$ 
and the the corresponding pullback $\LL(\cdot,p_*)$. By~\eqref{eq:differenceOfLxx} we observe that
\[
 \LL_0''(0_{x_*},p_*)(v,v)= \LL''(0_x,p_*)(v,v) \quad \forall v\in \mathrm{\ker}\,\Ci'(0_{x_*})
\]
since $\Li'(0_{x_*},p_*)=0$ and $\Ci'(0_{x_*})v=0$.
\end{proof}

\subsection{Local quadratic models and the Lagrange-Newton step}

In this section we consider in detail the linear quadratic model~\eqref{eq:lin_quad} of the pullback~\eqref{eq:pullback_problem}. We would like to carry over the ideas of~\cite{absil2009optimization} from unconstrained optimization on Riemannian manifolds to equality constrained optimization on Hilbert manifolds. 

In \cite{absil2009optimization} quadratic models of the objective $f$  are computed independently of the retractions used by the optimization algorithm. First order models use $T_x f$, which coincides with $\Fi'(0_x)$ for any retraction. Second order models are computed by the Riemannian hessian $\mathrm{Hess}\, f$, also known as second covariant derivative. We can view $\mathrm{Hess}\, f$ as a second derivative along geodesics, which is the second derivative $\Fd''(0_x)$ of the pullback $\Fd := f\circ \exp_x$ by the exponential map. This yields a second order quadratic model $\qd$ for $\Fd$. 

If an algorithm is implemented via a retraction $\Rp\neq \Rd$ with pullback $\Fp \neq \Fd$, we see that 
$\Fp''(0_x)\neq \Fd''(0_x)$ in general and thus, $\qd$ is not a second order model for the pullback $\Fp$. In \cite{absil2009optimization} those retractions, for which $\Fd''(0_x)= \Fp''(0_x)$ is guaranteed, are called \emph{second order retractions}. 

From that perspective, steps for unconstrained optimization are computed with the help of \emph{two potentially different retractions}: a natural one $\Rd=\exp_x$, to define a quadratic model $\qd$ and an implemented retraction $\Rp$ to compute an update $\Rp(\delta x)$ from a correction $\delta x\in T_x X$. 

Similarly, in equality constrained optimization, the computation of steps can be split into two parts:
\begin{itemize}
 \item[1.] Optimization algorithms are implemented, using retractions $\Rp$ and  stratifications $\Sp$.  They are needed to evaluate pullbacks $\Fp(\delta x)$ and  $\Cp(\delta x)$ and to compute updates $\Rp(\delta x)\in X$ for $\delta x\in T_xX$.

\item[2.] Linearly constrained quadratic models the nonlinear problem are computed via retractions $\Rd$ and stratifications $\Sd$. If $\Rd=\exp_x$ and $\Sd=\log_{c(x)}$, second covariant derivatives can be used for defining these models. $\Rd$ and  $\Sd$ need not be implemented.
\end{itemize}
As in the unconstrained case, this splitting causes a discrepancy between the pullback and its model. This discrepancy manifests in the transition mappings $\Phi_x$ of $\Rd$, $\Rp$ and $\Psi_{c(x)}$ of $\Sd$,$\Sp$, and in particular in their second derivatives $\Phi_x''$ and $\Psi_{c(x)}''$ via Lemma~\ref{lem:differenceOfLxx} 
\begin{remark}
 This slight shift of perspective, compared to \cite{absil2009optimization} allows us to consider second order methods on manifolds without requiring additional geometric structure (Riemannian metrics, or covariant derivatives) and corresponding advanced concepts of differential geometry on Hilbert manifolds. We hope that this makes our analysis accessible to a wider audience. 
 
 If a Riemannian structure is given, we can cover the purely geometric case, setting $\Rd=\exp_x$ and $\Sd=\log_{c(x)}$. However, we can also cover other cases. For example, if derivatives are computed directly for the implemented pullbacks, we just set $\Rd=\Rp$ and $\Sd=\Sp$.   
\end{remark}
In equality constrained optimization second order quadratic models employ, besides $\F'$ the second derivative of the Lagrangian function, which in our case is $\Ld''(0_x,p_x)$, computed via $\Rd$, $\Sd$. Thus, for some given Lagrange multiplier $p_x\in T_{c(x)}Y^*$ our quadratic model reads:
\begin{align}\label{eq:defq1}
\qd(\delta x):=\Fi(0_{x})+\Fi'(0_{x})\delta x+\frac{1}{2}\Ld''(0_{x},p_x)(\delta x, \delta x).
\end{align}
This leads to the following linearly constrained quadratic optimization problem:
\begin{align}\label{eq:plainQuadraticModel}
 \min_{\delta x\in T_x X} \qd(\delta x) \quad \mbox{ s.t. }\quad  \Ci'(0_x)\delta x+\Cp(0_x)=0.
\end{align}
If a minimizer $\Delta x$ of \eqref{eq:plainQuadraticModel} exists, we call it a \emph{full SQP-step}. 
Adding to $\qd$ any term that is constant on 
$\ker\,\Ci'(0_x)$ does not change $\Delta x$, since all feasible points of \eqref{eq:plainQuadraticModel} differ by an element of $\ker\,\Ci'(0_x)$. So we may add to $\qd$ the term $p_x\Ci'(0_x)$ and thus replace $\Fi'(0_x)\delta x$ by $\Li'(0_x,p_x)\delta x$ in  \eqref{eq:defq1} without changing the minimizer.
Then it follows that a minimizer $\Delta x$ of \eqref{eq:plainQuadraticModel} solves, together with a Lagrange multiplier $\Delta p$, the following system of first order optimality conditions:
\begin{align}\label{eq:Lagrange_Newton}
\left(\begin{array}{cc}
\Ld''(0_x,{p}_x) & \Ci'(0_{x})^{*}\\
\Ci'(0_{x})  & 0
\end{array} \right)
\left(\begin{array}{c}
\Delta x \\
\Delta p
\end{array} \right)
+
\left(\begin{array}{c}
\Li'(0_{x},p_x) \\
\Cp(0_x)
\end{array}\right)
=0.
\end{align}
 We observe, that \eqref{eq:Lagrange_Newton} resembles the Newton system for the first order optimality conditions~\eqref{eq:KKTconditions}, so $\Delta x$ is also called the \emph{Lagrange-Newton step}.

\paragraph{Second order consistency.} Let us specify the case, where $\qd$ is a second order model for $\Fp$ and $\Cp$, which clearly holds, if $\Lp''(0_x,p_x)=\Ld''(0_x,p_x)$. We carry over the definition of second order retractions from~\cite{absil2009optimization} to our setting: 
\begin{mydef}\label{def:pairs}\mbox{}
Pairs of retractions $(\Rd,\Rp)$ and stratifications $(\Sd,\Sp)$, are called second order consistent, if their transition mappings satisfy $\Phi_x''(0_{x})=0$ and $\Psi_y''(0_{y})=0$, respectively.
\end{mydef}
Inserting this definition into of~\eqref{eq:differenceOfLxx} yields:
\begin{prop}
Consider pairs of retractions $(\Rd,\Rp)$ and stratifications $(\Sd,\Sp)$.
\begin{itemize}
 \item[i)] if $(\Rd,\Rp)$ is second order consistent then
 $\Ld''(0_x,{p}_x)\!=\!\Lp''(0_x,{p}_x)$ on $\ker\,\Ci'(0_x)$.
 \item[ii)] if $(\Rd,\Rp)$ and $(\Sd,\Sp)$ are second order consistent, then
 $\Ld''(0_x,{p}_x)=\Lp''(0_x,{p}_x)$ on $T_x X$. 
 \end{itemize}
\end{prop}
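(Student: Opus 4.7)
The plan is to apply Lemma~\ref{lem:differenceOfLxx} directly: the difference of the two Lagrangian Hessians decomposes into exactly two terms, one carrying $\Phi_x''(0_x)$ and one carrying $\Psi_y''(0_y)$, and both parts of the proposition are obtained by killing these terms appropriately.

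For part (i), I would instantiate \eqref{eq:differenceOfLxx} with $\LL_1 = \Ld$ and $\LL_2 = \Lp$, so that the relevant transition map is $\Phi_x = \Rd^{-1}\circ \Rp$. Second order consistency of $(\Rd,\Rp)$ gives $\Phi_x''(0_x)=0$, which eliminates the first summand on the right hand side of \eqref{eq:differenceOfLxx}. For the second summand, I restrict to $v,w \in \ker \Ci'(0_x)$: then $\Ci'(0_x)v = \Ci'(0_x)w = 0$, and the bilinear form $p_x\,\Psi_{c(x)}''(0_{c(x)})(\cdot,\cdot)$ evaluates to zero on these arguments. Hence $(\Lp'' - \Ld'')(0_x,p_x)(v,w)=0$ for all $v,w$ in the kernel, which is exactly the claim.

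For part (ii), I additionally use second order consistency of $(\Sd,\Sp)$, which by definition says $\Psi_{c(x)}''(0_{c(x)})=0$. This makes the second summand in \eqref{eq:differenceOfLxx} vanish identically on $T_x X \times T_x X$, not just on the kernel. Combined with $\Phi_x''(0_x)=0$ from part (i), the entire right hand side of \eqref{eq:differenceOfLxx} is zero, giving $\Ld''(0_x,p_x) = \Lp''(0_x,p_x)$ as bilinear forms on $T_x X$.

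There is no real obstacle here; the work was already done in Lemma~\ref{lem:differenceOfLxx}. The only thing to be careful about is bookkeeping of which transition map plays the role of $\Phi_x$ versus $\Psi_y$, and making sure the pullback of $c$ entering $\Ci'(0_x)$ is consistent with the identification used in the lemma. Since by the invariance discussion preceding Lemma~\ref{lem:differenceOfLxx} the linearization $\Ci'(0_x)$ does not depend on the choice among $(\Rd,\Sd)$ or $(\Rp,\Sp)$, the statement ``on $\ker \Ci'(0_x)$'' is unambiguous.
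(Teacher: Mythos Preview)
Your proposal is correct and matches the paper's own argument exactly: the paper simply states that the proposition follows by inserting Definition~\ref{def:pairs} into~\eqref{eq:differenceOfLxx}, which is precisely what you do. Your bookkeeping of $\Phi_x=\Rd^{-1}\circ\Rp$ and the invariance of $\Ci'(0_x)$ is accurate and needs no adjustment.
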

It is clear from the definition of second order retrations in \cite{absil2009optimization} that the pair $(\exp_x,\Rp)$ is second order consistent
if and only if $\Rp$ is a second order retraction.

\subsection{Orthogonal splitting of the Lagrange-Newton step} 

Next, we will consider an orthogonal splitting of the Lagrange-Newton step. On the one hand, this will admit a convenient analysis of local convergence of an SQP method, on the other hand this gives us the computational basis for a globalization within the class of \emph{composite step methods}, a class of algorithms has become quite popular in nonlinear optimization
\cite{Vardi1985,Omojokun1990,heinkenschloss2014matrix,lubkoll2017affine}.

 The Lagrange-Newton step $\Delta x$, defined by~\eqref{eq:Lagrange_Newton} is split orthogonally via $\langle\cdot,\cdot\rangle_x$  into a \emph{normal step} 
 $\Delta n \in \ker \Ci'(0_x)^\perp$ and a \emph{tangential step} $\Delta t \in \ker \Ci'(0_x)$, so that $\Delta x=\Delta n+\Delta t$. In other words, we introduce an orthogonal splitting of $T_x X$ into $\ker\,\Ci'(0_x)$ and its orthogonal complement $\ker\,\Ci'(0_x)^\perp$. 
 
 These 
notions relate to the normal and tangential space of the subset $c^{-1}(c(x))\subset X$ at $x$, which is locally a submanifold of $X$, if $\Ci'(0_x)$ is surjective.
The normal step $\Delta n$ can be seen as a Newton step for the underdetermined system $c(x)=y_*$, while the tangential step $\Delta t$ serves as a minimization step. 
For globalization (cf. Section~\ref{sec:global}, below) both components will be modified independently with the aim to achieve progress of the resulting algorithm both in feasibility and optimality. 

\paragraph{Normal step.}
For $y=c(x)$ and $g\in T_{y}Y$, consider the following minimal norm problem: 
\begin{align}\label{eq:minimumnorm}
\min_{w \in T_xX} \frac{1}{2}  \langle  w,w \rangle_x \,\, s.t. \,\, \Ci'(0_{x})w +g=0_y.
\end{align}
This is equivalent to finding $w \in \ker \Ci'(0_{x})^{\perp}$ such that $\Ci'(0_{x})w +g=0_y$. 
If $\mathbf{c'}(0_x)$ is surjective, a feasible solution exists, and by the Lax-Milgram theorem (applied on the Hilbert space $\ker \Ci'(0_{x})$) we obtain a unique optimal solution of \eqref{eq:minimumnorm}. 
Clearly, $w$ (together with a Lagrange multiplier $q$) solves the corresponding first order optimality conditions:
\begin{align}\label{eq:minimumnormKKT}
\left(\begin{array}{cc}
M_x & \Ci'(0_{x})^{*} \\
\Ci'(0_{x}) & 0
\end{array}\right) \left(\begin{array}{c}
w \\
q
\end{array}\right)
+\left(\begin{array}{c}
0 \\
g
\end{array}\right)=0.
\end{align} 
Here $M_x : T_x X \to T_x X^*$ denotes the Riesz-isomorphism and $\Ci'(0_{x})^{*}:T_yY^*\to T_x X^*$ the adjoint of $\Ci'(0_{x})$. 
We write in short:
\[
w=-\Ci'(0_{x})^{-}g
\]
with the linear, bijective operator:
\[
   \Ci'(0_{x})^{-} : T_{c(x)}Y \to \ker\,\Ci'(0_x)^\perp. 
\]
We observe that $\Ci'(0_{x})\Ci'(0_{x})^{-}=id_{T_{y}Y}$ so $\Ci'(0_{x})^-$ is a right pseudo-inverse of $\Ci'(0_{x})$. 
Now we can define the full \emph{normal step} as the solution of \eqref{eq:minimumnormKKT} with $g=\Cp(0_{x})$:
\begin{align}\label{eq:normal_step}
\Delta n:=-\Ci'(0_{x})^{-}\Cp(0_{x}) \in \ker \Ci'(0_x)^\perp
\end{align} 
More generally, if we replace in \eqref{eq:minimumnormKKT} $\Ci'(0_x)$ by (surjective) $\Cp'(v_x)$ for some $v_x\in T_xX$ we obtain in the same way
a mapping
\[
\Cp'(v_x)^-: T_{y}Y \to \ker\,\Cp'(v_x)^\perp.
\]

\paragraph{Lagrange multiplier.} 
To be able to compute $\Ld''(0_x,p_x)$ at $x\in X$ we need to compute Lagrange multiplier estimate $p_x \in T_{y}Y^*$, first. 
A standard way is to define $p_x$ (together with the ``projected gradient'' $v$) as the unique solution of the system:
\begin{align}\label{eq:pKKT}
\left(\begin{array}{cc}
M_x & \Ci'(0_{x})^{*} \\
\Ci'(0_{x}) & 0
\end{array}\right)
\left(\begin{array}{c}
v \\
{p}_x
\end{array}\right)+
\left(\begin{array}{c}
\Fi'(0_x) \\
0
\end{array}\right)=0,
\end{align}
which is the system of first order optimality conditions to the minimization problem:
\[
  \min_{v\in T_x X} \Fi'(0_x)v+\frac12 \langle v,v\rangle_x \quad \mbox{ s.t. } \quad \Ci'(0_x)v =0_y.  
\]
Again, by the Lax-Milgram theorem, this problem has a unique solution $v$, and in $\Ci'(0_x)$ is surjective, $p_x$ is the corresponding unique Lagrange multiplier. 
It can be checked easily that ${p}_x$ satisfies 
\begin{align}\label{Computation_Lag_multiplier}
\Li'(0_x,p_x)w = \Fi'(0_{x}) w+{p}_x\Ci'(0_{x})w=0_x^* \quad \forall w\in \ker \Ci'(0_{x})^{\perp}
\end{align}
and we can write in short:
\[
 p_{x}=-\Fi'(0_x)\Ci'(0_x)^-.
\]
Since $\Li'(0_x,p)v=\Fi'(0_x)v$ for $v\in \ker \Ci'(0_x)$ and any $p\in T_y Y^*$, \eqref{Computation_Lag_multiplier} implies a minimum norm property:
\begin{equation}\label{eq:pminnorm}
 \|\Li'(0_x,p_x)\|_{x,*}\le\|\Li'(0_x,p)\|_{x,*} \quad \forall p\in T_y Y^*.
\end{equation}
Since the data involved in \eqref{eq:pKKT} does not depend on the chosen retraction and stratification, $p_x$ is invariant under a change of retractions and stratifications, but it does depend on the chosen norm. Moreover, if $0_x$ satisifies the first order optimality conditions, then $\Li'(0_x,p_x)=0$. 

More generally, for any $v_x\in T_x X$
\[
 \Fp'(v_x)w+p_{v_x}\Cp'(v_x)w=0_x^* \quad \forall w\in \ker\,\Cp'(v_x)^\perp
\]
is equivalent to 
\begin{equation}\label{eq:pxshort}
 p_{v_x}=-\Fp'(v_x)\Cp'(v_x)^-.
\end{equation}
\paragraph{Tangential step.}
For $\delta x\in T_x X$ consider the unique orthogonal splitting $\delta x:= \delta n + \delta t$ with $\delta n \in \ker \Ci'(0_x)^{\perp}$ and $\delta t \in \ker \Ci'(0_x)$. Then our quadratic model reads:
\begin{align*}
\qd(\delta x)&=\Fi(0_{x})+\Fi'(0_{x})(\delta n +\delta t)+\frac{1}{2}\Ld''(0_{x},p_x)(\delta n+\delta t,\delta n+\delta t). 
\end{align*}
Let us now fix $\delta n$ and consider the problem:
\begin{align}\label{eq:tangentmin}
\min_{\delta t} \qd(\delta n +\delta t) \quad  \mbox{s.t}\quad  \Ci'(0_{x})\delta t =0_y,
\end{align}
which, after adding the term ${p}_{x}\Ci'(0_{x})\delta t=0$ and omitting terms independent of $\delta t$  is equivalent to:
\begin{align*}
\min_{\delta t} &\left( \Li'(0_{x},{p}_x)+\Ld''(0_{x},p_x)\delta n\right)\delta t+\frac{1}{2}\Ld''(0_{x},p_x)(\delta t,\delta t)
\quad \mbox{ s.t.} \quad 
\Ci'(0_x)\delta t=0_y.
\end{align*}
 If $\Ld''(0_x,p_x)$ satisfies \eqref{eq:ellipticSSC} a solution $\Delta t$ (called \emph{tangential step}) of \eqref{eq:tangentmin} exists, using the Lax-Milgram theorem a third time. It solves, together with a Lagrange multiplier $\Delta p$, the following  linear system:
\begin{align}\label{Tangential_STP}
\left(\begin{array}{cc}
\Ld''(0_x,{p}_x) & \Ci'(0_{x})^{*}\\
\Ci'(0_{x})  & 0
\end{array} \right)
\left(\begin{array}{c}
\Delta t \\
\Delta p
\end{array} \right)
+
\left(\begin{array}{c}
\Li'(0_{x},{p}_{x})+ \Ld''(0_x,{p}_x) \delta n \\
0
\end{array}\right)
=0.
\end{align}
In particular, adding the normal step $\Delta n$ and the tangential step $\Delta t$ yields the solution $\Delta x = \Delta n+\Delta t$ of the full Lagrange-Newton system~\eqref{eq:Lagrange_Newton} with the same multiplier $\Delta p$. We recapitulate our findings:
\begin{prop}
 Assume that $\Ci'(0_x)$ is surjective. Then a well defined right pseudo-inverse
 $\Ci'(0_x)^-$ of $\Ci'(0_x)$  exists and $\Delta n=-\Ci'(0_x)^-\Cp(0_x)$ and $p_x=-\Fi'(0_x)\Ci'(0_x)^-$ are uniquely defined. 
 
 If in addition $\Ld''(0_x,p_x)$ satisfies \eqref{eq:ellipticSSC}, then $\Delta t$, and $(\Delta x,\Delta p)$ are uniquely defined as solutions of~\eqref{Tangential_STP} and \eqref{eq:Lagrange_Newton}, respectively.
\end{prop}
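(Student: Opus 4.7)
My plan is to view this proposition as a summary that bundles together the existence/uniqueness statements that were already established piecemeal in the construction of $\Delta n$, $p_x$, $\Delta t$, and $\Delta x$, so the proof mostly amounts to pointing to the right earlier arguments and verifying that the pieces fit together.

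First, for the existence of $\Ci'(0_x)^-$, I would observe that surjectivity of $\Ci'(0_x):T_x X\to T_{c(x)}Y$ is exactly the hypothesis used when deriving the minimum norm problem~\eqref{eq:minimumnorm}: for any right-hand side $g\in T_{c(x)}Y$ the affine set $\{w:\Ci'(0_x)w+g=0_y\}$ is nonempty, and Lax--Milgram applied to the restriction of $\langle\cdot,\cdot\rangle_x$ to the closed Hilbert subspace $\ker\,\Ci'(0_x)^\perp$ gives a unique minimizer $w=-\Ci'(0_x)^- g$, linear in $g$. This defines $\Ci'(0_x)^-:T_{c(x)}Y\to\ker\,\Ci'(0_x)^\perp$, bijective by construction. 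Taking $g=\Cp(0_x)$ yields the unique normal step $\Delta n=-\Ci'(0_x)^-\Cp(0_x)$, and the Lagrange multiplier $p_x$ is uniquely characterized by the same block-matrix system~\eqref{eq:pKKT} with $\Fi'(0_x)$ in the primal slot, giving $p_x=-\Fi'(0_x)\Ci'(0_x)^-$ as spelled out around~\eqref{Computation_Lag_multiplier}. Uniqueness of $p_x$ follows from injectivity of $\Ci'(0_x)^*$, which is equivalent to surjectivity of $\Ci'(0_x)$.

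Second, for the tangential step, I would apply Lax--Milgram a third time, now on the closed Hilbert space $\ker\,\Ci'(0_x)$: by the ellipticity hypothesis~\eqref{eq:ellipticSSC}, the bilinear form $\Ld''(0_x,p_x)(\cdot,\cdot)$ is bounded and coercive on $\ker\,\Ci'(0_x)$, so the quadratic subproblem~\eqref{eq:tangentmin} (with the linear term $(\Li'(0_x,p_x)+\Ld''(0_x,p_x)\delta n)$, which is a bounded functional on $\ker\,\Ci'(0_x)$) has a unique minimizer $\Delta t$. The associated Lagrange multiplier $\Delta p$ exists and is unique because $\Ci'(0_x)$ is surjective (closed range theorem again).

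Finally, to conclude uniqueness of $(\Delta x,\Delta p)$ as a solution of the full Lagrange--Newton system~\eqref{eq:Lagrange_Newton}, I would simply add the normal and tangential KKT systems~\eqref{eq:minimumnormKKT} (with $g=\Cp(0_x)$) and~\eqref{Tangential_STP} and observe that $\Delta x:=\Delta n+\Delta t$ together with the multiplier $\Delta p$ from the tangential step satisfies~\eqref{eq:Lagrange_Newton}, using $\Ci'(0_x)\Delta n=-\Cp(0_x)$, $\Ci'(0_x)\Delta t=0_y$, and the fact that $\Ld''(0_x,p_x)\Delta n$ on the primal side combines with $\Li'(0_x,p_x)$ exactly as required. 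Uniqueness of the solution of~\eqref{eq:Lagrange_Newton} follows because any other solution would, after orthogonal splitting, produce another normal/tangential decomposition and another multiplier, contradicting the uniqueness already established in the two previous paragraphs.

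The main obstacle is essentially cosmetic rather than substantive: I need to make sure that when combining the normal and tangential pieces, the same multiplier $\Delta p$ appears in both the split system and the full system, and that no spurious alternative decomposition of a solution to~\eqref{eq:Lagrange_Newton} can arise. Both are settled by the fact that the orthogonal splitting $T_x X=\ker\,\Ci'(0_x)\oplus\ker\,\Ci'(0_x)^\perp$ is unique and that surjectivity of $\Ci'(0_x)$ makes the adjoint injective, fixing the multiplier.
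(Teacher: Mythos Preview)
Your proposal is correct and matches the paper's approach exactly: the proposition is explicitly introduced in the text with ``We recapitulate our findings:'', and the paper gives no separate proof, relying instead on the three Lax--Milgram applications (for $\Delta n$, for $p_x$, and for $\Delta t$) and the observation that $\Delta n+\Delta t$ solves~\eqref{eq:Lagrange_Newton} with the same multiplier $\Delta p$, just as you outline. Your additional remarks on uniqueness via the orthogonal splitting and injectivity of $\Ci'(0_x)^*$ are sound and make the recapitulation slightly more explicit than the paper itself.
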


For purpose of globalization \emph{composite step methods} compute modified normal steps $\delta n$ and  tangential steps $\delta t$ (using, for example, a line-search, a trust-regions, or cubic regularization), and propose an update $\delta x=\delta n+\delta t$. We will return to this topic in Section~\ref{sec:global}.

\section{A local SQP-Method}\label{sec:convergence}

Let us briefly recapitulate our construction as follows. Starting from a problem on nonlinear spaces:
\[
 \min_{x\in X} f(x) \mbox{ s.t. } c(x)=y_* \mbox{ where  } c : X \to Y
\]
we first \emph{linearize the spaces} via $\Rp, \Sp$ at $x\in X$ to obtain a pullback on tangent spaces:
\[
 \min_{\delta x\in T_xX} \Fp(\delta x) \mbox{ s.t. } \Cp(\delta x)=0_{c(x)} \mbox{ where  } \Cp : T_xX \to T_{c(x)}Y,
\]
then we \emph{linearize the problem} at $0_x$ via $\Rd, \Sd$ to obtain a linearly constrained quadratic problem:
\begin{align*}
 \min_{\delta x\in T_x X} \qd(\delta x) \quad \mbox{ s.t. }\quad  \Ci'(0_x)\delta x+\Cp(0_x)=0_{c(x)}.
\end{align*}
Here $\qd$ is given by~\eqref{eq:defq1} and with $p_x$ from \eqref{eq:pKKT}.  Its solution $\Delta x$ is the full Lagrange-Newton step \eqref{eq:Lagrange_Newton}.

An SQP method creates a sequence of iterates by solving these quadratic problems. The update of iterates is performed by a retraction $x_+ = \Rp(\Delta x)$, which replaces the additive update $x_+=x+\Delta x$ that is used in linear spaces. 

To make an SQP-method well defined, we impose the following assumptions:
\begin{assumption}\label{ass:retractions}
 For each $x\in X$ there are local retractions $\Rd, \Rp$ (with domain of $\Rp$ denoted by $V_x$), local stratifications $\Sd$ and globally defined stratifications $\Sp$.
\end{assumption}
This yields Algorithm~\ref{alg:local _inner_loop_rough}:
\begin{algorithm}[h!]
  \caption{Local SQP method}\label{alg:local _inner_loop_rough}
  \begin{algorithmic}
  \Require initial iterate $x$
  \Repeat 
  \State compute a Lagrange multiplier estimate $p_x \in T_{c(x)}Y^*$ via \eqref{eq:pKKT}
    \State compute $\Delta x$ by solving \eqref{eq:Lagrange_Newton}, using $\Rd$, $\Sd$, $\Sp$
    \State \textbf{if} $\Delta x\not \in V_{x}$ \textbf{then}  terminate: ``update not defined'' 
    \State $x \gets \Rp(\Delta x)$ 
  \Until{ converged }
  \end{algorithmic}
 \end{algorithm}
 
 When it comes to the issue of globalization in Section~\ref{sec:global}, we will deal in a more robust way with the constraint $\Delta x\in V_x$. More generally, also $\Sp$ could be assumed local with $y_*$ in its domain, but to deal with the implicit restrictions, imposed by such a local stratification would be rather cumbersome in practice. In contrast, $\Rd$ and $\Sd$ only have to be defined locally, since they are merely used to define derivatives.  

 Since $p_x$ is updated in terms of the current iterate $x$, this is is an iteration in $x\in X$. The plain Lagrange-Newton method would be an iteration in $(x,p)\in X\times TY^*$ and thus require an initial guess and a vector transport for $p$. 
 It can be seen from \eqref{eq:Lagrange_Newton} that the Lagrange-Newton step $\Delta x$ is independent of the choice of $\Rp$. 
 
\subsection{Non-degeneracy of retractions}\label{sec:localNorm}

While the previous chapter was concerned with computations at specific points, we now have to deal with a sequence of points to analyse convergence of Algorithm~\ref{alg:local _inner_loop_rough}. We have to guarantee that the chosen retractions and norms do not degenerate while approaching a local minimizer $x_*$. Due to the inverse mapping theorem all retractions possess a neighbourhood $V^i_x \subset V_x$ of $0_x$, such that $R_x : V^i_x \to R_x(V^i_x)$ is a diffeomorphism and $R_x(V^i_x) \subset X$ is a neighbourhood of $x$. 


If $x_*\in X$ is some desired solution, and $x\to x_*$, we expect these neighbourhoods to overlap in a larger and larger area and in particular $x_*\in R_x(V^i_x)$, eventually, if retractions are chosen reasonably. However, our purely pointwise assumptions do not guarantee such a behaviour. Up to now, retractions may degenerate close to $x_*$ and $V^i_x$ or $R_x(V^i_x)$ may shrink rapidly, as $x\to x_*$. 

\begin{mydef}
 A family of retractions $R_x$ is called invertible over $M \subset X$, if $M \subset R_x(V^i_x)$ for all $x\in M$. 
\end{mydef}
Indeed, in that case, $R_x^{-1}(\xi)$ is well defined for all $x,\xi \in M$. 
If $M$ is open and $x\in M$, continuity of $R_x$ implies that $R_x^{-1}(M)$ is an open neighbourhood of $0_x$.  

For an invertible family $R_x$ of retractions on $M$ and $x_1,x_2\in M$ we can define a simple nonlinear transport operator from $T_{x_1}X$ to $T_{x_2} X$ as follows:
\begin{align}\label{eq:vectortransport}
 \begin{split}
   \Theta_{x_1\to x_2}:=R_{x_2}^{-1}\circ R_{x_1} : R^{-1}_{x_1}(M) &\to R^{-1}_{x_2}(M).
\end{split}
  \end{align}
 Clearly $\Theta_{x\to x}=id_{T_xX}$ and inverses are given by: 
\[
\Theta_{x_1\to x_2}^{-1}=R_{x_1}^{-1}\circ R_{x_2}=\Theta_{x_2\to x_1}: R^{-1}_{x_2}(M) \to R^{-1}_{x_1}(M).
\] 
If $M$ is open, we obtain a diffeomorphism between neighbourhoods of $0_{x_1}$ and $0_{x_2}$. 
\begin{mydef}
A family $(R_x,\|\cdot\|_x)$ of retractions and norms is called non-degenerate in a neighbourhood $U_{x_*}$ of $x_*\in X$, if the family $R_x$ is invertible over an open set $O \supset U_{x_*}$ and if there is $\rho > 0$  such that $R_x(B_\rho^{x}) \subset O$ for all $x\in U_{x_*}$ and the following Lipschitz condition holds:
 \begin{align}\label{eq:primalNormEquivalence} 
   \exists \gamma : \;\|\Theta_{x_1 \to x_2}(v)-\Theta_{x_1\to x_2}(w)\|_{x_2} \le \gamma\|v-w\|_{x_1} \quad \forall x_1,x_2\in U_{x_*}, \quad \forall v,w\in B_{\rho}^{x_1}.
 \end{align}
\end{mydef}
As a trivial example, let $x_*=0$ in a Hilbert space $(X,\langle \cdot,\cdot\rangle_X)$. For $\rho>0$ consider retractions $R_x(\delta x)=x+\delta x$, with the artifical choice $V_x^i=V_x:=B_{4\rho}^x$ for all $x\in X$. Choosing $O:=B_{2\rho}^0$ and $U_{x_*}=B_\rho^0$ the triangle inequality yields $O\subset R_x(B_{4\rho}^x)$ for all $x\in O$ (and thus invertibility over $O$) and $R_x(B_{\rho}^x) \subset O$ for all $x\in U_{x_*}$. 
Finally, $\Theta_{x_1\to x_2}(v)=v+(x_1-x_2)$ which shows $\gamma=1$. So the family $(R_x, \|\cdot\|_X)$ is non-degenerate on $U_{x_*}$. 
However, if we choose local norms $\|\cdot\|_x\neq \|\cdot\|_X$ very irregularly, even in this example non-degeneracy may be violated. This illustrates that non-degeneracy is also a condition on the local norms. 

\begin{figure}
\centering
 \includegraphics[width=0.95\textwidth]{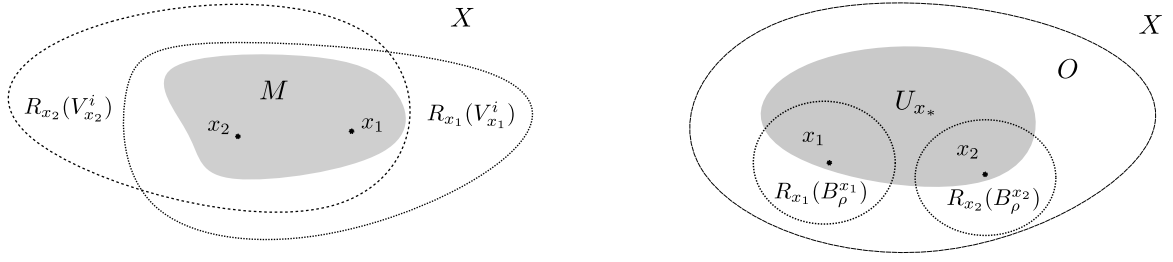}
 \caption{
 Left: $R_x$ is invertible over $M$, if $M \subset R_x(V_x^i)$ for all $x\in M$, so $\Theta_{x_1\to x_2}$ is well defined on $R_{x_1}^{-1}(M)$. Right: Non-degeneracy: $R_x(B_\rho^x) \subset O$ for all $x\in U_{x_*}$, and $\Theta_{x_1\to x_2}$ is Lipschitz on $B^{x_1}_\rho$.
 }
\end{figure}
A necessary condition for non-degeneracy is that in $U_{x_*}$ the domains of definition $V_x$ of $R_x$ all contain a ball of fixed size:
\[
 B_\rho^x \subset R_x^{-1}(O)\subset V_x^i \subset V_x.
\]
Consider $(R_{x},\|\cdot\|_x)$ non-degenerate in a neighbourhood $U_{x_*}$ of $x_*$.
Then $\Theta_{x_1\to x_2}$ is differentiable on $B^{x_1}_\rho$. Hence, by the mean value theorem we obtain the following equivalent condition to~\eqref{eq:primalNormEquivalence} in terms of derivatives:
 \begin{align*}
   \|\Theta'_{x_1\to x_2}(\xi)w\|_{x_2}\le \gamma\|w\|_{x_1} \quad \forall \xi\in B_\rho^{x_1}, \; \forall w\in T_{x_1}X.
 \end{align*}

\paragraph{Dual estimate for pullbacks.} Local non-degeneracy also allows us to establish a dual estimate for derivatives of pullbacks, as we will see in the following. 
Consider a function $f : X \to \R$ and its pullbacks 
\[
 \F := f\circ R_{x} : V_x \to \R \quad \mbox{ and } \quad \F_* := f \circ R_{x_*} : V_{x_*} \to \R,
\]
such that  $\F=\mathbf{f_*} \circ R_{x_*}^{-1}\circ R_{x}=\F_*\circ \Theta_{x\to x_*}$. 
For $x\in U_{x_*}$ we obtain by the chain-rule, setting $\mathbf{x}_* :=\Rp^{-1}(x_*)$:
\[
 \F'(\mathbf x_*)=\F_*'(\Theta_{x\to x_*}(\mathbf x_*))\Theta'_{x\to x_*}(\mathbf x_*)=\Fi'(0_{x_*})\Theta'_{x\to x_*}(\mathbf x_*).
\]
(where $\Fi'(0_{x_*})=\F_*'(0_{x_*})$ is invariant) and thus the dual estimate:
\begin{equation}\label{eq:NormEquivalenceDual}
 \|\F'(\mathbf x_*)\|_{x,*}\le\gamma\|\Fi'(0_{x_*})\|_{x_*,*}.  
\end{equation}
Since the right hand side is independent of $x$, the derivative of the pullback of $f$ at $\mathbf x_*$ is bounded close to $x_*$. 

 \paragraph{Smooth retractions on Riemannian manifolds.} Let us quickly sketch, without going into utmost detail, how smooth retractions $R:TX\to X$ as employed in \cite{absil2009optimization} are non-degenerate. 
 
Assume that $TR : TTX\to TX$ is continuously differentiable and recall that $X$ is modelled over the Hilbert space $(\mathbb X,\langle\cdot,\cdot\rangle_{\mathbb X})$. Let us have a look at the situation in a local chart $\varphi : U \to \mathbb X$, where $U \subset X$. We have the following representation of $R$ with respect to $\varphi$:
 \begin{align*}
 R^\varphi : \varphi(U)\times \mathbb X &\to \mathbb X\\
                     (\mathrm{x},\xi) &\mapsto R^{\varphi}(\mathrm{x},\xi),
 \end{align*}
 such that $R^{\varphi}(\mathrm{x},0)=\mathrm{x}$. We denote the  derivative of 
 $R^\varphi(\mathrm{x},\xi)$ with respect to $\xi$ by $\partial_\xi R^\varphi(\mathrm{x},\xi)$. By assumption $\partial_\xi R^\varphi(\mathrm{x},0) = id_{\mathbb X}$. We would like to study local invertibility of the mapping $R_\mathrm{x}^\varphi : \xi \to R^\varphi(\mathrm{x},\xi)$.

 Since $\partial_\xi R^\varphi$ is continuously differentiable, we may fix $\mathrm{x}_*\in \varphi(U)$ and infer that $\partial_\xi R^\varphi$ is Lipschitz continuous on a neighbourhood $V$ of $(\mathrm{x}_*,0)$. Hence, by the inverse mapping theorem (which yields quantitative results for quantitative assumptions) each $R_\mathrm{x}^\varphi$ is locally invertible on a ball $B_{\rho_{0}}\subset \mathbb X$ around $0$ for $\mathrm{x}\in V$ with $\rho_{0}$ independent of $\mathrm{x}$. In addition $R^\varphi_\mathrm{x}$ and its inverse are Lipschitz continuous with a constant $\sqrt{\gamma_{0}}$ that is also independent of $\mathrm{x}$. 
 
 Arguing as in our trivial example, above, this implies that $R^\varphi$ is invertible over a neighbourhood $O \subset \varphi(U)$ of $\mathrm{x}_*$
 and we also find $V_{\mathrm{x}_*} \subset O$ and $\rho_{\mathbb X}\le \rho_0$ such that $R_\mathrm{x}(B_{\rho_{\mathbb X}}^\mathrm{x}) \subset O$ for all $\mathrm{x}\in V_{\mathrm{x}_*}$.   Moreover, $\Theta^\varphi_{\mathrm{x}_1\to \mathrm{x}_2}=(R^\varphi_{\mathrm{x}_2})^{-1}\circ R^\varphi_{\mathrm{x}_1}$ is defined and Lipschitz continuous with Lipschitz constant $\gamma_{\mathbb X}\le \gamma_0$. Hence, $R^\varphi_\mathrm{x}$, $\|\cdot\|_{\mathbb X}$ are non-degenerate on $V_{\mathrm{x}_*}$.
 Since $\partial_\xi R^\varphi(\mathrm{x},0) = id_{\mathbb X}$ we observe that $\gamma_{\mathbb X}$ can be chosen arbitrarily close to $1$, if $V_{\mathrm{x}_*}$ and $\rho_{\mathbb X}$ are chosen sufficiently small, accordingly.  
 
 A Riemannian metric on $X$ is represented on $\varphi(U)\times \mathbb X$ by a continuous field of scalar products $\langle \cdot,\cdot\rangle_\mathrm{x}$ which are all equivalent to $\langle \cdot,\cdot\rangle_{\mathbb X}:=\langle \cdot,\cdot\rangle_{\mathrm x_*}$. By continuity we find locally uniform constants of equivalence on $V_{\mathrm{x}_*}$ (which can be chosen arbitrarily close to $1$ on correspondingly small choice of $V_{\mathrm{x}_*}$) and thus a uniform radius $\rho$ and Lipschitz constant $\gamma$ can also be established for the Riemannian metric. Still, $\gamma$ can be chosen arbitrarily close to $1$ if $\rho$ and $V_{\mathrm{x}_*}$ are chosen sufficiently small.

\subsection{Local convergence analysis}

 We now study local convergence of Algorithm~\ref{alg:local _inner_loop_rough}.  Compared to standard analysis of Newton's method, the situation is a little more delicate here, because after each step, a different retraction is chosen and a different local norm is used. Furthermore, the question arises, if second order consistent retractions and stratifications are needed for local quadratic convergence of our SQP method. It is already known from Newton methods on manifolds and from unconstrained optimization algorithms~\cite{huper2004newton,absil2009optimization} that local convergence can be achieved for arbitrary retractions.   

We are going to perform our convergence analysis in the framework of \emph{affine covariant Newton methods}. We refer to \cite{deuflhard2011newton} for a detailed account on and motivation of this approach. One consequence for our analysis is that norms on $T_y Y$ do not occur explicitely. 

We will denote by $x_*$ a local solution of \eqref{eq:mainProblem} and capture the nonlinearity of the pullbacks of the problem in the following assumptions:
\begin{assumption}\label{ass:local1}
Assume that $(\Rp,\|\cdot\|_{x})$ are non-degenerate on a neighbourhood $U_{x_*}$ of $x_*$, and that there are constants $\rho_0>0,\omega_{\Fp'},\omega_{\Cp},M_{\Cp}$, independent of $x\in U_{x_*}$, such that the following estimates hold for all $x\in U_{x_*}$, and all $v,\delta x\in T_x X$ with $\|\delta x\|_x\le \rho_0$:
\begin{align}
\label{eq:Lipschitzf}\|\Fp'(\delta x)-\Fi'(0_x)\|_{x,*}  &\leq \omega_{\Fp'} \|\delta x\|_x,\\
\label{eq:Lipschitzc1}
\Ci'(0_x) \mbox{ is surjective, }\quad \|\Ci'(0_x)^{-}(\Cp'(\delta x)-\Ci'(0_x))v\|_x&\leq \omega_{\Cp} \|\delta x\|_x\|v\|_x,\\
\label{eq:boundc}
\mathbf{x}_*:=\Rp^{-1}(x_*),\quad \Cp'(\mathbf x_*) \mbox{ is surjective, }\quad \|\Cp'(\mathbf x_*)^-(\Cp'(\mathbf x_*)-\Ci'(0_x))v\|_x&\le \omega^*_{\Cp}\|\mathbf x_*-0_x\|_x\|v\|_x.
\end{align}
\end{assumption}
While \eqref{eq:Lipschitzf} is a classical Lipschitz condition for $\Fp'$, the quantity $\omega_{\Cp}$ in \eqref{eq:Lipschitzc1} is slightly non-standard. It is called an \emph{affine covariant Lipschitz constant} for $\Cp'$, along the lines of \cite{deuflhard2011newton}. 
In this way no norms on $T_y Y$ need to be specified. Moreover, $\omega_{\Cp}$ can be estimated a-posteriori by an algorithmic parameter $[\omega_{\Cp}]$ that is adapted during the run of a globalized algorithm (see \eqref{eq:estimateomegac} in Section~\ref{sec:global}, below). Finally, \eqref{eq:boundc} excludes that the pullback of the constraints at $x_*$ degenerates, as $x$ approaches $x_*$. 
\begin{remark}
If norms in $T_yY$ are specified, \eqref{eq:Lipschitzc1} and \eqref{eq:boundc} are implied by classical Lipschitz continuity of $\Cp':T_x X\to T_{c(x)} Y$ and boundedness of the operator norm $\|\Cp'(v)^-\|_{T_{c(x)} Y\to T_x X}$. This would make a-priori analysis look more standard, but also render the estimates less sharp, quantitatively, and introduce an artificial dependency on the choice of norms on $T_y Y$. Similarly, one could split these assumptions into conditions on the nonlinearity of $f:X\to \R$ and $c:X\to Y$ and on $\Rp$ and $\Sp$. This, however, would hide the possibility to capture nonlinear problem structure by a juidicious choice of retractions (nonlinear preconditioning) and loosen the close connection between analysis and algorithm to a certain degree.   
\end{remark}

We start with some basic estimates on the derivative of the Lagrangian: 
\begin{lemma}\label{lem:auxLipschitz}
 Let $x_*$ be a local minimizer of \eqref{eq:mainProblem} and suppose that Assumption~\ref{ass:local1} holds near $x_*$. Then there is a neighbourhood $U$ of $x_*$ and constant $\omega_*$, such that for all $x\in U$ and $\mathbf{x}_*:=\Rp^{-1}(x_*)$ the following estimates hold:
 \begin{align}
\label{eq:LLipschitz}  \|\Li'(0_x,p_x)\|_{x,*} &\le \omega_{*}\|0_x-\mathbf{x}_*\|_x,\\
\label{eq:LLipschitz2}  |\Lp'(\mathbf{x}_*,p_x)v| &\le \omega_{*}\omega_{\Cp}\|0_x-\mathbf{x}_*\|^2_x\|v\|_x\quad \forall v\in \ker\, \Ci'(0_x).
 \end{align}
\end{lemma}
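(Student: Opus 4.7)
The plan is to compare $p_x$ with a reference multiplier at $\mathbf x_* := \Rp^{-1}(x_*)$. Since $c(x_*)=y_*$, one has $\Cp(\mathbf x_*)=\Sp(c(x_*))-\Sp(y_*)=0$, so $\mathbf x_*$ is a local minimizer of the pullback problem on $T_xX$. Surjectivity of $\Cp'(\mathbf x_*)$ from~\eqref{eq:boundc} together with the closed-range argument already used for~\eqref{eq:KKTconditions} yields that
\[
 p_{\mathbf x_*} := -\Fp'(\mathbf x_*)\,\Cp'(\mathbf x_*)^-
\]
satisfies $\Lp'(\mathbf x_*,p_{\mathbf x_*})=0$ on all of $T_xX$. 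Moreover, the dual pullback estimate~\eqref{eq:NormEquivalenceDual} applied to $f$ provides a constant $C$ bounding $\|\Fp'(\mathbf x_*)\|_{x,*}$ uniformly for $x$ in a neighbourhood $U\subset U_{x_*}$ of $x_*$ chosen small enough (by non-degeneracy of $(\Rp,\|\cdot\|_x)$) that $\|\mathbf x_*\|_x\le\rho_0$.

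For~\eqref{eq:LLipschitz} I invoke the minimum-norm property~\eqref{eq:pminnorm} with the admissible multiplier $p_{\mathbf x_*}$ and subtract $\Lp'(\mathbf x_*,p_{\mathbf x_*})=0$:
\begin{align*}
 \Li'(0_x,p_{\mathbf x_*}) &= \Li'(0_x,p_{\mathbf x_*}) - \Lp'(\mathbf x_*,p_{\mathbf x_*}) \\
 &= \bigl(\Fp'(0_x)-\Fp'(\mathbf x_*)\bigr) + p_{\mathbf x_*}\bigl(\Ci'(0_x)-\Cp'(\mathbf x_*)\bigr).
\end{align*}
The first summand is controlled by~\eqref{eq:Lipschitzf}; for the second I factor
\[
 p_{\mathbf x_*}\bigl(\Ci'(0_x)-\Cp'(\mathbf x_*)\bigr) = \Fp'(\mathbf x_*)\,\Cp'(\mathbf x_*)^-\bigl(\Cp'(\mathbf x_*)-\Ci'(0_x)\bigr)
\]
and apply~\eqref{eq:boundc} together with the uniform bound $C$. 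This yields \eqref{eq:LLipschitz} with $\omega_* := \omega_{\Fp'}+C\,\omega_{\Cp}^*$.

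For~\eqref{eq:LLipschitz2} I use $\Lp'(\mathbf x_*,p_{\mathbf x_*})=0$ a second time, writing
\[
 \Lp'(\mathbf x_*,p_x)v = (p_x-p_{\mathbf x_*})\,\Cp'(\mathbf x_*)\,v.
\]
The crucial point is a $\Ci'(0_x)^-$-trick: for $v\in\ker\,\Ci'(0_x)$ the term $(p_x-p_{\mathbf x_*})\Ci'(0_x)v$ vanishes, so
\[
 \Lp'(\mathbf x_*,p_x)v = (p_x-p_{\mathbf x_*})\bigl(\Cp'(\mathbf x_*)-\Ci'(0_x)\bigr)v = (p_x-p_{\mathbf x_*})\,\Ci'(0_x)\,u,
\]
where $u := \Ci'(0_x)^-(\Cp'(\mathbf x_*)-\Ci'(0_x))v \in \ker\,\Ci'(0_x)^\perp$ satisfies $\|u\|_x\le\omega_{\Cp}\|\mathbf x_*\|_x\|v\|_x$ by~\eqref{eq:Lipschitzc1}. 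The defining identity~\eqref{Computation_Lag_multiplier} of $p_x$ gives $p_x\,\Ci'(0_x)\,u=-\Fp'(0_x)\,u$, and rewriting $\Ci'(0_x)=\Cp'(\mathbf x_*)-(\Cp'(\mathbf x_*)-\Ci'(0_x))$ together with the definition of $p_{\mathbf x_*}$ yields $p_{\mathbf x_*}\,\Ci'(0_x)\,u=-\Fp'(\mathbf x_*)\,u-p_{\mathbf x_*}(\Cp'(\mathbf x_*)-\Ci'(0_x))u$. Subtracting, the remaining terms are estimated exactly as in part~(a) by $\omega_*\|\mathbf x_*\|_x\|u\|_x$; inserting $\|u\|_x\le\omega_{\Cp}\|\mathbf x_*\|_x\|v\|_x$ gives~\eqref{eq:LLipschitz2} with the same constant $\omega_*$.

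The main obstacle is bookkeeping between derivatives at $0_x$ and at $\mathbf x_*$ and between the various pullbacks; once the reference multiplier $p_{\mathbf x_*}$ is in place and one spots the $\Ci'(0_x)^-$-trick in part~(b) (which converts the linear-in-$\|\mathbf x_*\|_x$ bound of part~(a) into a quadratic one via the extra factor $\omega_{\Cp}\|\mathbf x_*\|_x$ from~\eqref{eq:Lipschitzc1}), the estimates reduce to routine applications of~\eqref{eq:Lipschitzf}, \eqref{eq:Lipschitzc1}, and~\eqref{eq:boundc}.
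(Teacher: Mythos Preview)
Your proof is correct and follows essentially the same route as the paper's. In part~(a) the arguments coincide verbatim (reference multiplier $p_{\mathbf x_*}$, subtract $\Lp'(\mathbf x_*,p_{\mathbf x_*})=0$, apply~\eqref{eq:Lipschitzf} and~\eqref{eq:boundc}, then~\eqref{eq:pminnorm}); in part~(b) your unpacking of $(p_x-p_{\mathbf x_*})\Ci'(0_x)u$ term by term amounts exactly to the paper's slightly slicker observation that this quantity equals $-\Li'(0_x,p_{\mathbf x_*})u$ on $\mathrm{ran}\,\Ci'(0_x)^-$, after which both proofs invoke the bound from part~(a) and the factor $\omega_{\Cp}\|\mathbf x_*\|_x$ from~\eqref{eq:Lipschitzc1}.
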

\begin{proof}
Choose $U\subset U_{x_*}$ in such a way that $\|0_x-\mathbf x_*\|_x< \rho_0$ for all $x\in U$. 
Since $\mathbf x_*$ is a local minimizer of the pullback~\eqref{eq:pullback_problem} with $\Rp,\Sp$, there is $p_{\mathbf x_*}\in T_{c(x)}Y^*$, such that $\Lp'(\mathbf{x}_*,p_{\mathbf x_*})=0$ with  $p_{\mathbf x_*}=-\Fp'(\mathbf x_*)\Cp'(\mathbf x_*)^-$. This yields for $v\in T_xX$, using $\Ci'(0_x)\Ci'(0_x)^-=id_{T_{c(x)}Y}$:
\begin{align*}
 \Li'(0_x,p_{\mathbf x_*})v&=(\Li'(0_x,p_{\mathbf x_*})-\Lp'(\mathbf x_*,p_{\mathbf x_*}))v=(\Fi'(0_x)-\Fp'(\mathbf{x}_*))v+p_{\mathbf x_*}(\Ci'(0_x)-\Cp'(\mathbf{x}_*))v\\
 &=(\Fi'(0_x)-\Fp'(\mathbf{x}_*))v+\Fp'(\mathbf{x}_*)\Cp'(\mathbf{x}_*)^-(\Cp'(\mathbf{x}_*)-\Ci'(0_x))v.
\end{align*}
This yields via \eqref{eq:Lipschitzf} and \eqref{eq:boundc}:
\begin{equation*}\label{eq:auxLipschitz}
 |\Li'(0_x,p_{\mathbf x_*})v| \le (\omega_{\Fp'}\|0_x-\mathbf{x}_*\|_x+\|\Fp'(\mathbf{x}_*)\|_{x,*}\omega^*_{\Cp}\|0_x-\mathbf{x}_*\|_x)\|v\|_x.
\end{equation*}
By~\eqref{eq:NormEquivalenceDual} $\|\Fp'(\mathbf{x}_*)\|_{x,*}\le\gamma \|\Fi'(0_{x_*})\|_{x_*,*}$ on $U_{x_*}$ and with
$
\omega_* := \omega_{\Fp'}+\omega^*_{\Cp}\gamma\|\Fi'(0_{x_*})\|_{x_*,*}$
we get via~\eqref{eq:pminnorm}:
\begin{equation}\label{eq:auxLipschitz2}
 \|\Li'(0_x,p_x)\|_{x,*}\le \|\Li'(0_x,p_{\mathbf x_*})\|_{x,*} \le \omega_*\|0_x-\mathbf x_*\|_x \quad \forall x\in U.
\end{equation}
Next, we compute from $\Ci'(0_x)\Ci'(0_x)^-=id_{T_{c(x)}Y}$ and $\Li'(0_x,p_x)=0$ on $\ker\,\Ci'(0_x)^\perp=\mathrm{ran}\,\Ci'(0_x)^{-}$:
\begin{align*}
\Lp'(\mathbf x_*,p_x)&=\Lp'(\mathbf x_*,p_x)-\Lp'(\mathbf x_*,p_{\mathbf x_*})=(p_x-p_{\mathbf x_*})\Cp'(\mathbf x_*)=(p_x-p_{\mathbf x_*})\Ci'(0_x)\Ci'(0_x)^-\Cp'(\mathbf x_*)\\
&=\left(\Li'(0_x,p_x)-\Li'(0_x,p_{\mathbf x_*})\right)\Ci'(0_x)^-\Cp'(\mathbf x_*)=-\Li'(0_x,p_{\mathbf x_*})\Ci'(0_x)^-\Cp'(\mathbf x_*).  
\end{align*}
Thus, for $v\in \ker\,\Ci'(0_x)$ we get by \eqref{eq:auxLipschitz2} and \eqref{eq:Lipschitzc1} (with $\delta x=\mathbf{x_*}$):
\begin{align*}
 |\Lp'(\mathbf{x}_*,p_x)v|&=|\Li'(0_x,p_{\mathbf x_*})\Ci'(0_x)^-(\Cp'(\mathbf x_*)-\Ci'(0_x))v|\le \|\Li'(0_x,p_{\mathbf x_*})\|_{x,*}\|\Ci'(0_x)^{-}(\Cp'(\mathbf{x}_*)-\Ci'(0_x))v\|_x\\
 &\le \omega_*\|\mathbf{x}_*-0_x\|_x\omega_{\Cp}\|\mathbf{x}_*-0_x\|_x\|v\|_x.
\end{align*}
\end{proof}
Next we show for the pullback of our problem at $T_xX$, where $x$ is the current iterate, that one Newton step reduces the error quadratically. We need the following additional assumption:
\begin{assumption}\label{ass:local2}
Let $U_{x_*}$ be a neighbourhood of $x_*$ and assume that $(\Rp,\|\cdot\|_x)$ are non-degenerate on $U_{x_*}$.  Assume further that there are constants $\rho_0>0,\omega_{\Lp}$, $\alpha_{\Ld''}>0,M_{\Ld''}$, $M_\phi$ such that for all $x\in U_{x_*}$:
\begin{align}
\label{eq:LipschitzL} \vert (\Lp''(\delta x,p_x)-\Lp''(0_x,p_x))(v,w) \vert &\leq \omega_{\Lp} \|\delta x\|_x\|v\|_x\|w\|_x, \quad \forall \delta x,v,w\in T_x X: \|\delta x\|_x\le \rho_0\\
 \label{eq:ellipticity} \Ld''(0_x,p_x)(v,v)&\ge \alpha_{\Ld''}\|v\|_x^2 \qquad \qquad \qquad \forall v\in \ker \Ci'(0_x)\\
 \label{eq:boundedLpp} |\Ld''(0_x,p_x)(v,w)|&\le M_{\Ld''}\|v\|_x\|w\|_x \qquad \quad \;\;\forall v,w\in T_x X\\
 \label{eq:boundedRetraction} \|\Phi_x''(0_x)(v,w)\|_x &\le M_\Phi \|v\|_x\|w\|_x \qquad \qquad \forall v,w\in T_x X
\end{align}
\end{assumption}
The first three assumptions are all fairly standard for local convergence analysis: smoothness, regularity, and boundedness of the second order term. Compared to the setting of vector spaces, only \eqref{eq:boundedRetraction}, a uniform bound on the second derivative of the transition mapping $\Phi_x$ of $\Rd,\Rp$ is new. No explicit consistency assumptions on the stratifications are needed. 
\begin{prop}\label{pro:localNewton}
 Suppose that Assumption~\ref{ass:local1} and Assumption~\ref{ass:local2} hold at a local minimizer $x_*$. Then, there is
 a neighbourhood $U$ of $x_*$, such that the following holds for all $x\in U$:
 
 There is a constant $\beta$, independent of $x \in U$, such that with $\mathbf{x}_*:=\Rp^{-1}(x_*)$ we obtain on $T_x X$:
 \begin{equation}\label{eq:newtonquad}
   \|0_x+\Delta x-\mathbf{x}_*\|_x \le \beta\|0_x-\mathbf{x}_*\|^2_x.   
 \end{equation}
 \end{prop}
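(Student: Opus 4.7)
The plan is to decompose the error $\Delta x - \mathbf{x}_*$ along the orthogonal splitting of $T_xX$ at $0_x$ and estimate the normal and tangential parts separately. Write $\mathbf{x}_* = \mathbf{n}_* + \mathbf{t}_*$ with $\mathbf{n}_* \in \ker\Ci'(0_x)^\perp$, $\mathbf{t}_* \in \ker\Ci'(0_x)$, and similarly $\Delta x = \Delta n + \Delta t$. The key observation is that since $\Sp$ is globally defined and $c(x_*) = y_*$, the definition of the pullback yields $\Cp(\mathbf{x}_*) = 0$, so the constraint is exactly satisfied at $\mathbf{x}_*$. The neighbourhood $U$ will be shrunk so that $\|\mathbf{x}_*\|_x \le \rho_0$ for all $x \in U$ (possible because non-degeneracy of $\Rp$ ensures $\|\mathbf{x}_*\|_x \to 0$ as $x \to x_*$) and so that the conclusions of Lemma~\ref{lem:auxLipschitz} apply.

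For the normal part, I would apply $\Ci'(0_x)^-$ to the Taylor identity
\[
-\Cp(0_x) - \Ci'(0_x)\mathbf{x}_* \;=\; \int_0^1 (\Cp'(s\mathbf{x}_*) - \Ci'(0_x))\mathbf{x}_* \, ds,
\]
recalling that $\Ci'(0_x)^-\Ci'(0_x)$ is the orthogonal projection onto $\ker\Ci'(0_x)^\perp$, hence sends $\mathbf{x}_*$ to $\mathbf{n}_*$. Combined with the affine covariant Lipschitz condition~\eqref{eq:Lipschitzc1}, integration in $s$ yields $\|\Delta n - \mathbf{n}_*\|_x \le \tfrac{\omega_{\Cp}}{2}\|\mathbf{x}_*\|_x^2$.

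For the tangential part I would test the Newton equation~\eqref{Tangential_STP} against $v \in \ker\Ci'(0_x)$, substitute the Taylor expansion $\Li'(0_x,p_x)v = \Lp'(\mathbf{x}_*,p_x)v - \int_0^1 \Lp''(s\mathbf{x}_*,p_x)(\mathbf{x}_*,v)\,ds$, and add $-\Ld''(0_x,p_x)(\mathbf{t}_*,v)$ to both sides. Using $\mathbf{x}_* = \mathbf{n}_* + \mathbf{t}_*$ this rearranges to
\[
\Ld''(0_x,p_x)(\Delta t - \mathbf{t}_*, v) \;=\; R(v),
\]
with a right-hand side $R(v)$ composed of four pieces: (i) $-\Lp'(\mathbf{x}_*,p_x)v$, controlled by \eqref{eq:LLipschitz2}; (ii) the Taylor remainder $\int_0^1 (\Lp''(s\mathbf{x}_*,p_x) - \Lp''(0_x,p_x))(\mathbf{x}_*,v)\,ds$, controlled by \eqref{eq:LipschitzL}; (iii) the model/pullback discrepancy $(\Lp''(0_x,p_x) - \Ld''(0_x,p_x))(\mathbf{x}_*,v)$; and (iv) $-\Ld''(0_x,p_x)(\Delta n - \mathbf{n}_*,v)$, handled via the normal estimate and \eqref{eq:boundedLpp}. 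The main obstacle is term (iii), since second order consistency is \emph{not} assumed; here I would invoke Lemma~\ref{lem:differenceOfLxx}. The crucial point is that for $v \in \ker\Ci'(0_x)$ the stratification contribution $p_x\,\Psi_y''(0_y)(\Ci'(0_x)\mathbf{x}_*,\Ci'(0_x)v)$ vanishes identically, and what remains is $\Li'(0_x,p_x)\Phi_x''(0_x)(\mathbf{x}_*,v)$, which is bounded by $\omega_*M_\Phi\|\mathbf{x}_*\|_x^2\|v\|_x$ by combining \eqref{eq:LLipschitz} with the uniform bound~\eqref{eq:boundedRetraction}. This is precisely the reason no consistency assumption on $\Sp$, but only~\eqref{eq:boundedRetraction} on the retraction transition mapping, is needed.

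Collecting the four estimates gives $|R(v)| \le C\|\mathbf{x}_*\|_x^2\|v\|_x$ with $C$ independent of $x \in U$. Setting $v := \Delta t - \mathbf{t}_* \in \ker\Ci'(0_x)$ and invoking ellipticity~\eqref{eq:ellipticity} converts the dual estimate into $\|\Delta t - \mathbf{t}_*\|_x \le (C/\alpha_{\Ld''})\|\mathbf{x}_*\|_x^2$. Orthogonality $\|\Delta x - \mathbf{x}_*\|_x^2 = \|\Delta n - \mathbf{n}_*\|_x^2 + \|\Delta t - \mathbf{t}_*\|_x^2$ and the identity $\|\mathbf{x}_*\|_x = \|0_x - \mathbf{x}_*\|_x$ then produce~\eqref{eq:newtonquad} for a suitable constant $\beta$.
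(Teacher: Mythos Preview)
Your proposal is correct and follows essentially the same approach as the paper: both split the error orthogonally, handle the normal component via the affine covariant Lipschitz condition~\eqref{eq:Lipschitzc1} and the fundamental theorem of calculus, and handle the tangential component by testing the Newton system against $v\in\ker\,\Ci'(0_x)$, invoking Lemma~\ref{lem:differenceOfLxx} (with the key observation that $\Ci'(0_x)v=0$ kills the $\Psi_y''$-contribution), Lemma~\ref{lem:auxLipschitz}, \eqref{eq:LipschitzL}, \eqref{eq:boundedLpp}, \eqref{eq:boundedRetraction}, and then ellipticity~\eqref{eq:ellipticity}. The only cosmetic differences are that the paper splits the combined error $0_x+\Delta x-\mathbf{x}_*=n_*+t_*$ directly (rather than $\mathbf{x}_*$ and $\Delta x$ separately) and finishes with the triangle inequality instead of Pythagoras.
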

\begin{proof}
In the following analysis we will only need to apply the estimates of Assumption~\ref{ass:local1} and~\ref{ass:local2} for $\delta x$ being a convex combination of $0_x$ and $\mathbf x_*$ in $T_x X$. We thus choose the neighbourhood $U \subset\Rp[x_*](B^{x_*}_{\rho_*})$ as the image of some ball $B^{x_*}_{\rho_*}$ via $\Rp[x_*]$, intersected by the neighbourhood of $x_*$, in which Assumption~\ref{ass:local1} and~\ref{ass:local2} hold. Specifying $\rho_* \le \gamma^{-1}\rho_0$ implies $\|0_x-\mathbf x_*\|_x\le \rho_0$ by \eqref{eq:primalNormEquivalence}, which is sufficient to justify the application of  Assumption~\ref{ass:local1} and~\ref{ass:local2} and the results of Lemma~\ref{lem:auxLipschitz}. 

Adding in the Lagrange-Newton step \eqref{eq:Lagrange_Newton} the vector $0_x-\mathbf x_*$ to $\Delta x$, compensating this in the right hand side, and subtracting $\Cp(\mathbf x_*)=0_y$, we obtain  the identity:
\begin{align}\label{eq:NewtonErrorSystem}
\left(\begin{array}{cc}
\Ld''(0_x,{p}_x) & \Ci'(0_{x})^{*}\\
\Ci'(0_{x})  & 0
\end{array} \right)
\left(\begin{array}{c}
0_x+\Delta x-\mathbf x_* \\
\Delta p
\end{array} \right)
+
\left(\begin{array}{c}
\Li'(0_{x},p_x)-\Ld''(0_x,{p}_x)(0_x-\mathbf x_*) \\
\C(0_x)-\Cp(\mathbf x_*)-\Ci'(0_x)(0_x-\mathbf x_*)
\end{array}\right)
=0.
\end{align}
To show \eqref{eq:newtonquad} we will analyse~\eqref{eq:NewtonErrorSystem} by the following orthogonal splitting:
 \[
 0_x+\Delta x-\mathbf{x}_*=n_*+t_*, \quad n_* \in \ker\, \Ci'(0_x)^\perp, \quad t_*\in \ker\, \Ci'(0_x)
 \]
 and estimate $\|n_*\|_x$ and $\|t_*\|_x$, separately. 
 
 For the normal part $n_*$, we use \eqref{eq:minimumnormKKT}
 with $g=\C(0_x)-\Cp(\mathbf x_*)-\Ci'(0_x)(0_x-\mathbf x_*)$:
\[
 n_* = \Ci'(0_x)^-(\Ci'(0_x)(0_x-\mathbf{x}_*)-(\C(0_x)-\Cp(\mathbf{x}_*))),
\]
which implies $n_* \in \ker\, \Ci'(0_x)^\perp$ satisfies the second row of \eqref{eq:NewtonErrorSystem}.

Application of \eqref{eq:Lipschitzc1} yields via the fundamental theorem of calculus and \eqref{eq:Lipschitzc1} (with $\delta x=(1-s)\mathbf x_*$, $s\in[0,1])$:
\begin{align*}
\|n_*\|_x &= \left\|\int_0^1\Ci'(0_x)^-(\Ci'(0_x)-\Cp'((1-s)\mathbf{x}_*)(0_x-\mathbf{x}_*)\,ds\right\|_x\\
&\le 
\int_0^1 \omega_{\Cp} (1-s)\|0_x-\mathbf x_*\|_x^2\, ds=
\frac{\omega_{\Cp}}{2}\|0_x-\mathbf{x}_*\|_x^2. 
\end{align*}
Since $\Ci'(0_x)t_*=0$, $n_*+t_*$ satisfies the second row of \eqref{eq:NewtonErrorSystem}.  The remaining first row of \eqref{eq:NewtonErrorSystem} then reads:
\begin{align*}
\Ld''(0_x,{p}_x)(t_*+n_*) +\Ci'(0_{x})^{*}\Delta p&= \Ld''(0_x,p_x)(0_x-\mathbf{x}_*)-\Li'(0_x,p_x).
\end{align*}
This is an equation in $T_xX^*$. Testing with $v\in \ker\,\Ci'(0_x)$ and computing $(\Ci'(0_{x})^{*}\Delta p)v=\Delta p\,\Ci'(0_{x})v=0$ we obtain:
\begin{align}\label{eq:tplus}
\Ld''(0_x,{p}_x)(t_*,v)= \Ld''(0_x,p_x)(0_x-\mathbf{x}_*,v)-\Li'(0_x,p_x)v-\Ld''(0_x,{p}_x)(n_*,v)\quad \forall v\in \ker\,\Ci'(0_x).
\end{align}
Next, we  derive an estimate for the right hand side of the form:
\begin{align}\label{eq:desiredlest}
 |(\Ld''(0_x,p_x)(0_x-\mathbf{x}_*)-\Li'(0_x,p_x)-\Ld''(0_x,p_x)n_*)v|
 \le M_*\|0_x-\mathbf{x}_*\|_x^2\|v\|_x \quad \forall v\in\ker\, \Ci'(0_x)
\end{align}
to obtain a suitable bound for $\|t_*\|_x$ via ellipticity~\eqref{eq:ellipticity}.
To show \eqref{eq:desiredlest} we first observe
\[
|\Ld''(0_x,p_x)(n_*,v)| \le M_{\Ld''}\|n_*\|_x\|v\|_x\le M_{\Ld''}\frac{\omega_{\Cp}}{2}\|0_x-\mathbf{x}_*\|_x^2\|v\|_x.
\]
Next we telescope:
\begin{align*}
 (\Ld''(0_x,p_x)(0_x-\mathbf{x}_*)-\Li'(0_x,p_x))v
 &=(\Ld''(0_x,p_x)-\Lp''(0_x,p_x))(0_x-\mathbf{x}_*,v)\\
 &+(\Lp''(0_x,p_x)(0_x-\mathbf{x}_*)-\Li'(0_x,p_x))v
\end{align*}
into a sum of two terms. The first term (which vanishes if $\Rd$ and $\Rp$ are second order consistent) is estimated via \eqref{eq:differenceOfLxx} and  \eqref{eq:boundedRetraction}: \begin{align*}
 |(\Ld''(0_x,p_x)&-\Lp''(0_x,p_x))(0_x-\mathbf{x}_*,v)|\\
 &=|\Li'(0_x,p_x)\Phi_{x}''(0_x)(0_x-\mathbf{x}_*,v)+p_x\Psi_y''(0_{c(x)})(\Ci'(0_x)(0_x-\mathbf{x}_*),\Ci'(0_x) v)|\\
 &\le \|\Li'(0_x,p_x)\|_{x,*}M_{\Phi}\|0_x-\mathbf{x}_*\|_x\|v\|_x
 \le \omega_*M_{\Phi}\|0_x-\mathbf{x}_*\|^2_x\|v\|_x.
\end{align*}
Observe that $\Psi''_y(0_{c(x)})$ dropped out, because $v\in \ker\, \Ci'(0_x)$.

The second term is estimated via~\eqref{eq:LipschitzL}, using again the fundamental theorem of calculus, and~\eqref{eq:LLipschitz2} due to $v\in \ker\,\Ci'(0_x)$:
\begin{align*}
|(\Lp''(0_x,p_x)&(0_x-\mathbf{x}_*)-\Li'(0_x,p_x))v|\\
&\le
 |\left(\Lp''(0_x,p_x)(0_x-\mathbf{x}_*)-(\Li'(0_x,p_x)-\Lp'(\mathbf{x}_*,p_x))\right)v|+|\Lp'(\mathbf{x}_*,p_x)v|\\
 &=\left|\int_0^1 \left(\Lp''(0_x,p_x)-\Lp''((1-s)\mathbf{x}_*,p_x)\right)(0_x-\mathbf x_*,v)\,ds\right|+|\Lp'(\mathbf{x}_*,p_x)v|\\
 &\le \frac{\omega_{\Lp}}{2}\|0_x-\mathbf{x}_*\|_x^2\|v\|_x+\omega_*\omega_{\Cp}\|0_x-\mathbf{x}_*\|_x^2\|v\|_x. 
\end{align*}
Adding all these estimates yields \eqref{eq:desiredlest} with
\[
 M_*=M_{\Ld''}\frac{\omega_{\Cp}}{2}+\omega_*M_\Phi+\frac{\omega_{\Lp}}{2}+\omega_*\omega_{\Cp}. 
\]
Thus, inserting $v=t_*$ into \eqref{eq:tplus}, using ellipticity~\eqref{eq:ellipticity} and \eqref{eq:desiredlest} we obtain
\[
 \|t_*\|_x \le \frac{M_*}{\alpha_{\Ld''}}\|0_x-\mathbf{x}_*\|_x^2.
\]
In total we obtain by the triangle inequality:
\[
 \|0_x+\Delta x-\mathbf{x}_*\|_x\le \|n_*\|_x+\|t_*\|_x\le \left(\frac{\omega_{\Cp}}{2}+\frac{M_*}{\alpha_{\Ld''}}\right)\|0_x-\mathbf{x}_*\|_x^2
\]
 and thus the desired result. 
\end{proof}

Finally, we exploit non-degeneracy to show that quadratic convergence can be observed for the pullback of the SQP-sequence to $T_{x_*} X$. For that we will need the local vector transport $\Theta_{x_*\to x}$ defined in \eqref{eq:vectortransport}.
\begin{theo}\label{thm:localNewton}
Suppose that Assumption~\ref{ass:local1} and Assumption~\ref{ass:local2} hold at a local minimizer $x_*$. Assume that $x_0$ is sufficiently close $x_*$, i.e., that $\|\Rp[x_*]^{-1}(x_0)-0_{x_*}\|_{x_*}$ is sufficiently small. 
 
 Then Algorithm~\ref{alg:local _inner_loop_rough} creates a sequence $x_k \in X$, defined by $x_{k+1}=\Rp[x_k](\Delta x_k)$ that converges quadratically towards $x_*$.
 This means, if we denote $\mathbf{x}_k := \Rp[x_*]^{-1}(x_k)$ there is $\beta_*$, such that:
 \begin{equation}\label{eq:quadraticconvergence}
   \|\mathbf{x}_{k+1}-0_{x_*}\|_{x_*}\le \beta_*\|\mathbf{x}_{k}-0_{x_*}\|^2_{x_*}.
 \end{equation}
\end{theo}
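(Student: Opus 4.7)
The plan is to reduce the claim to Proposition~\ref{pro:localNewton} by transporting everything to the fixed tangent space $T_{x_*}X$ via the nonlinear transport $\Theta_{x_k\to x_*}$ supplied by non-degeneracy of $(\Rp,\|\cdot\|_x)$. Denote $\mathbf x_*^{(k)}:=\Rp[x_k]^{-1}(x_*)\in T_{x_k}X$ and $\mathbf x_k := \Rp[x_*]^{-1}(x_k)\in T_{x_*}X$. The update rule $x_{k+1}=\Rp[x_k](\Delta x_k)$ combined with the definition of the transport gives
\[
 \mathbf x_{k+1} = \Rp[x_*]^{-1}\bigl(\Rp[x_k](\Delta x_k)\bigr) = \Theta_{x_k\to x_*}(\Delta x_k),
\qquad
 0_{x_*} = \Theta_{x_k\to x_*}(\mathbf x_*^{(k)}),
\]
the second identity following from $\Rp[x_k](\mathbf x_*^{(k)})=x_*$.

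First, I would apply the Lipschitz condition~\eqref{eq:primalNormEquivalence} to the two values of $\Theta_{x_k\to x_*}$ displayed above, which yields
\[
 \|\mathbf x_{k+1}-0_{x_*}\|_{x_*}
 \le \gamma\,\|\Delta x_k - \mathbf x_*^{(k)}\|_{x_k}
 = \gamma\,\|0_{x_k}+\Delta x_k-\mathbf x_*^{(k)}\|_{x_k}.
\]
Assuming that $x_k$ lies in the neighbourhood $U$ from Proposition~\ref{pro:localNewton}, we can bound the right hand side by $\gamma\beta\|0_{x_k}-\mathbf x_*^{(k)}\|_{x_k}^2$. Next, applying~\eqref{eq:primalNormEquivalence} a second time to the transport $\Theta_{x_*\to x_k}$ evaluated at $0_{x_*}$ and at $\mathbf x_k$ (which map to $\mathbf x_*^{(k)}$ and $0_{x_k}$ respectively), we get
\[
 \|\mathbf x_*^{(k)}-0_{x_k}\|_{x_k} \le \gamma\,\|\mathbf x_k-0_{x_*}\|_{x_*}.
\]
Chaining the two estimates produces~\eqref{eq:quadraticconvergence} with $\beta_*:=\gamma^3\beta$.

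The only remaining task is the induction that keeps $x_k$ inside the neighbourhood $U$ where Proposition~\ref{pro:localNewton} applies and where all Lipschitz constants are valid. I would choose $r>0$ so small that (i) the ball $\{\mathbf v\in T_{x_*}X: \|\mathbf v-0_{x_*}\|_{x_*}\le r\}$ maps under $\Rp[x_*]$ into $U$, (ii) $\beta_* r<1$, and (iii) the resulting iterates $x_k$ satisfy the hypothesis $\|0_{x_k}-\mathbf x_*^{(k)}\|_{x_k}\le\rho_0$ used in Assumptions~\ref{ass:local1}--\ref{ass:local2} (which again follows from~\eqref{eq:primalNormEquivalence} applied to $\mathbf x_k$). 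Then, given $\|\mathbf x_0-0_{x_*}\|_{x_*}\le r$, the estimate~\eqref{eq:quadraticconvergence} at step $k$ gives $\|\mathbf x_{k+1}-0_{x_*}\|_{x_*}\le\beta_* r\cdot\|\mathbf x_k-0_{x_*}\|_{x_*}$, so the sequence contracts geometrically and in particular stays within the admissible ball, sustaining the induction.

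The main obstacle is bookkeeping rather than conceptual: one has to keep track of the fact that Proposition~\ref{pro:localNewton} is formulated in the $x_k$-chart while the claim is measured in the $x_*$-chart, and that both the retraction, its inverse and the norm change with the base point. Non-degeneracy of $(\Rp,\|\cdot\|_x)$ is exactly tailored to compare these objects, and it is the only ingredient beyond Proposition~\ref{pro:localNewton} that enters the argument. No additional assumptions on stratifications or on second-order consistency are required at this stage, since both are already embedded in the quadratic estimate of Proposition~\ref{pro:localNewton}.
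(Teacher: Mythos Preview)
Your proposal is correct and follows essentially the same route as the paper's proof: transport the estimate of Proposition~\ref{pro:localNewton} from $T_{x_k}X$ to $T_{x_*}X$ by applying the Lipschitz bound~\eqref{eq:primalNormEquivalence} once to $\Theta_{x_k\to x_*}$ and once to $\Theta_{x_*\to x_k}$, arriving at $\beta_*=\gamma^3\beta$, and then close by an induction that keeps the iterates in a small ball in $T_{x_*}X$. Your bookkeeping is in fact slightly more explicit than the paper's (you spell out condition~(iii), which the paper absorbs into the choice of $U$ in the proof of Proposition~\ref{pro:localNewton}).
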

\begin{proof}
 Consider $x_k$ in the neighbourhood $U$ of $x_*$, used in Proposition~\ref{pro:localNewton}.
We use non-degeneracy~\eqref{eq:primalNormEquivalence}, and Proposition~\ref{pro:localNewton}, denoting $\mathbf{x}_{*,k}=\Rp[x_k]^{-1}(x_*)$:
\begin{align*}
   \|\mathbf{x}_{k+1}-0_{x_*}\|_{x_*}&=\|\Theta_{x_k\to x_*}(0_{x_k}+\Delta x_k)-\Theta_{x_k\to x_*}(\mathbf x_{*,k})\|_{x_k}\\ 
   &=\gamma\|0_{x_k}+\Delta x_k-\mathbf{x}_{*,k}\|_{x_k} \le \beta\gamma\|0_{x_k}-\mathbf{x}_{*,k}\|^2_{x_k}=\beta\gamma\|\Theta_{x_*\to x_k}(\mathbf x_k)-\Theta_{x_*\to x_k}(0_{x_*})\|^2_{x_k} \\
   &\le\beta\gamma^3\|\mathbf{x}_{k}-0_{x_*}\|_{x_*}^2,
 \end{align*}
 so \eqref{eq:quadraticconvergence} holds with $\beta_*=\beta\gamma^3$. 
 $\Rp[x_*]^{-1}(U)$ is a neighbourhood of $0_{x_*}$ and thus contains a ball $B^{x_*}_{\rho_1}$. Choosing $\mathbf x_0$ in a ball $B^{x_*}_{\rho_2}$ of radius $\rho_2 := \min\{\rho_1,1/(2\beta_*)\}$ 
implies by induction that 
\[
  \|\mathbf x_{k+1}-0_{x_*}\|_{x_*}\le \frac12\|\mathbf x_{k}-0_{x_*}\|_{x_*}< \frac{\rho_2}{2^{k+1}}.
  \]
  So $\mathbf x_k \in B^{x_*}_{\rho_2}$ for all $k$ and $\mathbf x_k \to 0_{x_*}$
   with \eqref{eq:quadraticconvergence}.
\end{proof}
We conclude that second order consistency of retractions and stratifications is not needed for local quadratic convergence of Algorithm~\ref{alg:local _inner_loop_rough}.

\section{Consistency of Quadratic Models and the Maratos Effect}

To obtain a robust optimization algorithm, SQP methods have to be equipped with a globalization strategy. Roughly speaking, any such strategy computes modified trial corrections $\delta x$ instead of full steps $\Delta x$ and performs an acceptance test to decide whether $\delta x$ can be used as an update. Typically, such a test involves, among other things, a comparison between the objective $f$ (or a merit function) and the quadratic model $q$, used by SQP. For details on this classical subject we refer to the standard literature on nonlinear optimization (e.g. \cite{conn2000trust,Nocedal1999}).

For second order methods a globalization scheme should support fast local convergence, which means that eventually or at least asymptotically full SQP steps $\Delta x$ are used. It turns out that one necessary ingredient for this is a relation of the form $f(x+\Delta x)-q(\Delta x)= o(\|\Delta x\|^2)$. Unfortunately, in equality constrained optimization this relation is \emph{false} for second order methods, because $q$ involves $L''(x,p)$ and not $f''(x)$ as a second order term. We only have $f(x+\Delta x)-q(\Delta x)= O(\|\Delta x\|^2)$, a difficulty that is known as the root of the \emph{Maratos effect}. If an algorithm suffers from the Maratos effect (attributed to \cite{maratos1978}), its globalization scheme sometimes rejects full Lagrange-Newton steps, even if iterates are already very close to the optimal solution. This may result in an undesirable slow-down of local convergence: only linear, but not superlinear convergence is observed, occasionally. 

Various modifications have been proposed to overcome this problem. A popular variant is to apply a so called \emph{second order correction} $\delta s$, such that $f(x+\Delta x+\delta s)-q(\Delta x)= o(\|\Delta x\|^2)$ can be shown. A thorough discussion of this classical issue in constrained optimizaton and its algorithmic implications can be found e.g. in \cite[Ch. 15.5/6]{Nocedal1999}.

Clearly, similar issues will arise in the more general framework of equality constrained optimization on manifolds. In addition, retractions and stratifications and their consistency may play a role for the order of the error between objective functional and its quadratic model. The purpose of this section is to discuss these issues. 

\subsection{Standard quadratic model}

Let us recall our setting of Section~\ref{sec:LocalSQP} at a given point $x\in X$ and $y=c(x)$. In particular, we use pairs of retractions $(\Rd,\Rp)$, stratifications $(\Sd[y],\Sp[y])$ with transition mappings $\Phi_x$, $\Psi_{y}$, pullbacks $\F$, $\C$, and their derivatives. Our quadratic model was defined in \eqref{eq:defq1} by:
\[
 \qd(\delta x):= \Fi(0_x)+\Fi'(0_x)\delta x+\frac12\Ld''(0_x,p_x)(\delta x,\delta x).
\]
For arbitrary $\delta x\in T_x X$ we consider the orthogonal splitting $\delta x=\delta n+\delta t$ into a normal component $\delta n\in \ker\,\Ci'(0_{x})^\perp$ and a tangential component $\delta t\in \ker\,\Ci'(0_{x})$. 
\paragraph{Simplified normal step/second order correction.}
Let $\delta x\in T_x X$ be arbitrary. 
We compute a simplified normal step, also called a second order correction as follows:
\begin{align}\label{simplifiednormstepcharts}
\delta s:=-\Ci'(0_{x})^{-}\left( \Cp(\delta x)- \Cp(0_{x})-\Ci'(0_{x})\delta x \right)\in T_x X.
\end{align}
Its purpose is twofold:
\begin{itemize}
 \item[i)] Following the ideas of~\cite{deuflhard2011newton} this step can be used to construct an affine invariant globalization mechanism with respect to feasibility that does not require an evaluation of $\|c(x)\|$. We will elaborate more on this in Section~\ref{sec:global}, below. 
 \item[ii)] This step serves as a second order correction, used to counter-act the Maratos effect. 
\end{itemize}
For a trial correction $\delta x$, new iterates are now computed using $\Rp$, namely:
\[
 x_+ := \Rp(\delta x+\delta s).
\]
Thus, for the new objective function value, we obtain:
\[
 f(x_+) =f(\Rp(\delta x+\delta s)) = \Fp(\delta x+\delta s). 
\]
Following the introductory discussion concerning the Maratos effect, we are going to analyse the error $|\Fp(\delta x+\delta s)-\qd(\delta x)|$ between the functional and its quadratic model. It would be desirabe to have an error of $o(\|\delta x\|_x^2)$. However, we have to take into account that the pairs $(\Rd,\Rp)$ and $(\Sd[y],\Sp[y])$ also will introduce an additional error: 
\begin{lemma}\label{ModelEstimate}
The following identity holds:
\begin{align*}
\begin{split}
\Fp(\delta x& + \delta s)-\qd(\delta x)=\rp(\delta x)+ \ssp(\delta x) + \frac12\left(\Li'(0_x,p_x)\Phi_x''(0_x)(\delta x,\delta x)+p_x\Psi_y''(0_y)(\Ci'(0_x)\delta n,\Ci'(0_x)\delta n)\right)
 \end{split}
\end{align*}
with higher order terms $\rp(\delta x)$ and $\ssp(\delta x)$, given by:
\begin{align*}
\rp(\delta x)&:=\Lp(\delta x,p_x)-\LL(0_x,p_x)-\Li'(0_x,p_x)\delta x-\frac{1}{2}\Lp''(0_x,p_x)(\delta x,\delta x) \\
\ssp(\delta x)&:=\Fp(\delta x+ \delta s)-\Fp(\delta x)-\Fi'(0_x)\delta s.
\end{align*}
\end{lemma}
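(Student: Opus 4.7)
The plan is to write $\Fp(\delta x + \delta s)$ as a Lagrangian-based Taylor expansion around $0_x$, and then subtract $\qd(\delta x)$ to identify the error terms. The trick is to use $\Fp(\cdot) = \Lp(\cdot, p_x) - p_x \circ \Cp(\cdot)$ so that the multiplier $p_x$ is available for bookkeeping: it will first absorb the constraint curvature, then effect the key cancellation that involves the second-order correction $\delta s$.

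Concretely, I would first split off $\ssp$ by the identity $\Fp(\delta x + \delta s) = \Fp(\delta x) + \Fi'(0_x)\delta s + \ssp(\delta x)$, and then Taylor expand $\Lp(\cdot, p_x)$ at $0_x$ using the definition of $\rp$ together with $\LL(0_x, p_x) = \Fi(0_x) + p_x\Cp(0_x)$. Collecting the resulting $p_x$-terms one obtains the combination $p_x(\Cp(0_x) + \Ci'(0_x)\delta x - \Cp(\delta x))$, which, by the very definition \eqref{simplifiednormstepcharts} of $\delta s$, equals $p_x\Ci'(0_x)\delta s$. Combined with the earlier $\Fi'(0_x)\delta s$ this assembles into $\Li'(0_x,p_x)\delta s$, leaving
\[
\Fp(\delta x + \delta s) = \Fi(0_x) + \Fi'(0_x)\delta x + \Li'(0_x,p_x)\delta s + \tfrac{1}{2}\Lp''(0_x,p_x)(\delta x,\delta x) + \rp(\delta x) + \ssp(\delta x).
\]

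The decisive step is then to note that $\delta s \in \mathrm{ran}\,\Ci'(0_x)^{-} = \ker\Ci'(0_x)^\perp$, while \eqref{Computation_Lag_multiplier} characterises $p_x$ by $\Li'(0_x,p_x) = 0$ on exactly that subspace. Hence the term $\Li'(0_x,p_x)\delta s$ vanishes. Subtracting $\qd(\delta x) = \Fi(0_x) + \Fi'(0_x)\delta x + \tfrac{1}{2}\Ld''(0_x,p_x)(\delta x,\delta x)$ reduces the difference to $\rp + \ssp + \tfrac{1}{2}(\Lp'' - \Ld'')(\delta x,\delta x)$. Lemma~\ref{lem:differenceOfLxx} rewrites the last discrepancy as
\[
\Li'(0_x,p_x)\Phi_x''(0_x)(\delta x,\delta x) + p_x\Psi_y''(0_y)\bigl(\Ci'(0_x)\delta x, \Ci'(0_x)\delta x\bigr),
\]
and finally the orthogonal splitting $\delta x = \delta n + \delta t$ with $\delta t \in \ker\Ci'(0_x)$ gives $\Ci'(0_x)\delta x = \Ci'(0_x)\delta n$, producing the stated identity.

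I expect the main obstacle to be purely notational: keeping $\Fi',\Li'$ (invariant first derivatives) and $\Lp'',\Ld''$ (chart-dependent second derivatives) cleanly apart, and verifying that exactly the terms intended for $\rp$ and $\ssp$ are swept into those remainders. The only genuinely substantive ingredient is the cancellation $\Li'(0_x,p_x)\delta s = 0$, which is precisely the raison d'être of choosing the second-order correction inside the normal space and of the particular Lagrange multiplier defined by \eqref{eq:pKKT}.
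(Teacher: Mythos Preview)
Your proposal is correct and follows essentially the same route as the paper: both arguments hinge on the definitions of $\rp$ and $\ssp$, the identity $\Ci'(0_x)\delta s = \Cp(0_x)+\Ci'(0_x)\delta x-\Cp(\delta x)$, the cancellation coming from \eqref{Computation_Lag_multiplier} (the paper phrases it as $\Fi'(0_x)\delta s = -p_x\Ci'(0_x)\delta s$, you as $\Li'(0_x,p_x)\delta s=0$, which is the same thing), Lemma~\ref{lem:differenceOfLxx}, and $\Ci'(0_x)\delta x = \Ci'(0_x)\delta n$. The only difference is organizational: the paper starts from $\rp(\delta x)+\qd(\delta x)$ and expands, whereas you start from $\Fp(\delta x+\delta s)$ and peel off $\ssp$, then $\rp$; the ingredients and the key cancellation are identical.
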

\begin{proof}
We compute
\begin{align*}
\rp(\delta x)&+\qd(\delta x)
= \Lp(\delta x,p_x)-\LL(0_x,p_x)-\Li'(0_x,p_x)\delta x-\frac12 \Lp''(0_x,p_x)(\delta x,\delta x) \\
&+ \Fi(0_x) + \Fi'(0_x,p_x)\delta x+ \frac12 \Ld''(0_x,p_x)(\delta x,\delta x) \\
&=\Fp(\delta x) + p_x[\Cp(\delta x)-\C(0_x)-\Ci'(0_x)\delta x]+\frac12\left(\Ld''(0_x,p_x)-\Lp''(0_x,p_x)\right)(\delta x,\delta x)\\
&=\Fp(\delta x)-p_x\Ci'(0_x)\delta s
-\frac12\left(\Li'(0_x,p_x)\Phi_x''(0_x)(\delta x,\delta x)+p_x \Psi_y''(0_y)(\Ci'(0_x)\delta x,\Ci'(0_x)\delta x)\right),  
\end{align*}
where the identity  $(\ref{eq:differenceOfLxx})$ has been used. Given that $\Fi'(0_x)\delta s=-p_x\Ci'(0_x)\delta s$ and adding and subtracting $\Fp(\delta x + \delta s)$, we obtain
\begin{align*}
 \rp(\delta x)+\qd(\delta x)
 &=\Fp(\delta x+\delta s) - (\Fp(\delta x + \delta s)-\Fp(\delta x )-\Fi'(0_x)\delta s)\\
 &- \frac12\left(\Li'(0_x,p_x)\Phi_x''(0_x)(\delta x,\delta x)+ p_x \Psi_y''(0_y)(\Ci'(0_x)\delta x,\Ci'(0_x)\delta x\right).
\end{align*}
Introducing $\ssp(\delta x)$ and observing $\Ci'(0_x)\delta x=\Ci'(0_x)\delta n$ we obtain the desired result.
\end{proof}

Next, we quantify the size of the higher order terms $\rp(\delta x)$ and $\ssp(\delta x)$, using the assumptions from Section~\ref{sec:convergence} needed for local fast convergence.

\begin{lemma}\label{prop:LipschitzConsistency}
 Assume that \eqref{eq:Lipschitzf}, \eqref{eq:Lipschitzc1}, and $\eqref{eq:LipschitzL}$ hold at $x\in X$ with constants $\omega_{\Fp'}$, $\omega_{\Cp}$ and $\omega_{\Lp}$. Then we conclude for sufficiently small $\delta x$:
 \begin{align}
 \label{eq:estds}  \|\delta s\|_{x} &\le \frac{\omega_{\Cp}}{2}\|\delta x\|_x^2=O(\|\delta x\|_x^2),\\
  \rp(\delta x) &\le \frac{\omega_{\Lp}}{6}\|\delta x\|_x^3=O(\|\delta x\|_x^3),\\
  \ssp(\delta x) &\le \frac{\omega_{\Fp'}}{2}\|\delta s\|_x\left(2\|\delta x\|_x+\|\delta s\|_x \right)=O(\|\delta x\|_x^3).
 \end{align}
 Thus, in particular
\begin{align*}
\begin{split}
\Fp(\delta x& + \delta s)-\qd(\delta x)=\frac12\left(\Li'(0_x,p_x)\Phi_x''(0_x)(\delta x,\delta x)+p_x\Psi_y''(0_y)(\Ci'(0_x)\delta n,\Ci'(0_x)\delta n)\right)+O(\|\delta x\|_x^3).
 \end{split}
\end{align*}
\end{lemma}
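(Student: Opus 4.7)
The plan is to establish each of the three displayed estimates by a Taylor-type integral representation combined with the corresponding Lipschitz hypothesis from Assumption~\ref{ass:local1}--\ref{ass:local2}, and then insert the $O(\|\delta x\|_x^3)$ bounds on $\rp$ and $\ssp$ into the identity of Lemma~\ref{ModelEstimate} to conclude. Smallness of $\delta x$ is used only to ensure that the intermediate arguments $s\delta x$ and $\delta x+s\delta s$ stay in the $\rho_0$-ball where the Lipschitz conditions apply.

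For the second-order correction $\delta s$, I would invoke the fundamental theorem of calculus to write
\[
\Cp(\delta x)-\Cp(0_x)-\Ci'(0_x)\delta x=\int_0^1\bigl(\Cp'(s\delta x)-\Ci'(0_x)\bigr)\delta x\,ds,
\]
pull $\Ci'(0_x)^{-}$ inside the integral, and apply the affine covariant Lipschitz estimate~\eqref{eq:Lipschitzc1} with argument $s\delta x$ and test vector $\delta x$. The factor $\int_0^1 s\,ds=1/2$ then yields $\|\delta s\|_x\le \tfrac{\omega_{\Cp}}{2}\|\delta x\|_x^2$.

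For $\rp(\delta x)$, which is precisely the second-order Taylor remainder of $g(s):=\Lp(s\delta x,p_x)$ at $s=0$, I would use the standard integral remainder formula
\[
\rp(\delta x)=\int_0^1(1-s)\bigl(\Lp''(s\delta x,p_x)-\Lp''(0_x,p_x)\bigr)(\delta x,\delta x)\,ds,
\]
and apply \eqref{eq:LipschitzL}. Since $\int_0^1 s(1-s)\,ds=1/6$, this gives the bound $\tfrac{\omega_{\Lp}}{6}\|\delta x\|_x^3$. Analogously, writing $\Fp(\delta x+\delta s)-\Fp(\delta x)=\int_0^1 \Fp'(\delta x+s\delta s)\delta s\,ds$ and subtracting $\Fi'(0_x)\delta s$, I would bound each $\Fp'(\delta x+s\delta s)-\Fi'(0_x)$ using~\eqref{eq:Lipschitzf} with $\|\delta x+s\delta s\|_x\le\|\delta x\|_x+s\|\delta s\|_x$; the $s$-integration then produces the stated estimate for $\ssp(\delta x)$.

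Finally, combining the bound $\|\delta s\|_x=O(\|\delta x\|_x^2)$ with the estimate on $\ssp$ gives $\ssp(\delta x)=O(\|\delta x\|_x^3)$, and so $\rp(\delta x)+\ssp(\delta x)=O(\|\delta x\|_x^3)$. Inserting this into the identity of Lemma~\ref{ModelEstimate} yields the claimed asymptotic formula. The only genuine obstacle is a bookkeeping one: arranging each integral so that the scaling factor $s$ appears in the right place to produce $1/2$, $1/6$, and the prefactor in the $\ssp$-bound; the rest is routine application of the hypotheses already available.
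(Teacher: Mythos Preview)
Your proposal is correct and follows essentially the same route as the paper: each bound is obtained by an integral representation via the fundamental theorem of calculus together with the appropriate Lipschitz hypothesis, and the final asymptotic identity follows by plugging into Lemma~\ref{ModelEstimate}. The only cosmetic difference is that for $\rp(\delta x)$ you use the single-integral Taylor remainder $\int_0^1(1-s)[\Lp''(s\delta x,p_x)-\Lp''(0_x,p_x)](\delta x,\delta x)\,ds$ with $\int_0^1 s(1-s)\,ds=\tfrac16$, whereas the paper writes this as a nested double integral yielding $\int_0^1\int_0^1\tau\sigma^2\,d\tau\,d\sigma=\tfrac16$; these are equivalent.
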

\begin{proof}
The results are straightforward applications of the fundamental theorem of calculus.  
Setting $v=\sigma \delta x$ in \eqref{eq:Lipschitzc1}, we have that 
\begin{align*}
\|\delta s\|_x&\leq \int_0^1 \frac{1}{\sigma}\|\Ci'(0_x)^{-}(\Cp'(\sigma \delta x)-\Ci'(0_x))\sigma \delta x\|_x \,d\sigma  
\leq \frac{\omega_{\Cp}}{2} \|\delta x\|_x^2 
\end{align*}
By \eqref{eq:LipschitzL}, we get by applying the fundamental theorem of calculus two times:
\begin{align*}
\vert \rp(\delta x) \vert&\leq \int_0^1 \int_0^1 \frac{1}{\tau^2 \sigma}\vert (\Lp''(\tau \sigma  \delta x,p_x)-\Lp''(0_x,p_x))(\tau \sigma \delta x,\tau \sigma \delta x) \vert \,d\tau d\sigma\\ 
&\leq \omega_{\Lp}\|\delta x\|_x^3 \int_0^1\int_0^1 \tau \sigma^2 \, d \tau d \sigma =\frac{\omega_{\Lp}}{6}\|\delta x\|_x^3.
\end{align*}
and for $\ssp$ we obtain similarly by \eqref{eq:Lipschitzf}:
 \begin{align*}
\vert \ssp(\delta x)\vert &\leq \int_0^1 \vert (\Fp'(\delta x+\sigma \delta s)-\Fi'(0_x))\delta s \vert d \sigma \leq \omega_{\Fp'}\|\delta s\|_x\int_0^1 \|\delta x+ \sigma \delta s\|_x \, d \sigma\\
&\leq \omega_{\Fp'}\|\delta s\|_x\left(\|\delta x\|_x+\frac{1}{2}\|\delta s\|_x \right)\le \frac{\omega_{\Fp'}\omega_{\Cp}}{2}\|\delta x\|_x^2\left(\|\delta x\|_x+\frac{\omega_{\Cp}}{4}\|\delta x\|_x^2 \right).
\end{align*} 
\end{proof}

\paragraph{Discussion.}

Obviously, $|\Fp(\delta x+\delta s)-\qd(\delta x)|=o(\|\delta x\|_2)$, if the employed retractions and stratifications are both \emph{second order consistent}, since then $\Phi_x''(0_x)$ and $\Psi_y''(0_y)$ vanish by definition. 

However, if retractions and stratifications are not second order consistent, we observe that this error is only $O(\|\delta x\|^2)$, and not, as desired, $o(\|\delta x\|^2)$. Two terms play a major role:

\begin{itemize}
 \item The first term $w_1(\delta x):=\Li'(0_x,p_x)\Phi_x''(0_x)(\delta x,\delta x)$ occurs if $(\Rd,\Rp)$ is not second order consistent. We observe, however, that this term vanishes at a point that satisfies the first order optimality conditions and is small in a neighbourhood thereof due to~\eqref{eq:LLipschitz}. Thus, although this term is quadratic, we have, close to a local minimizer $x_*$ (using again $\mathbf{x}_*:= \Rp^{-1}(x_*))$: 
 \[
   w_1(\delta x)\sim \|\Li'(0_x,p_x)\|_{x,*}O(\|\delta x\|_x^2)\sim O(\|0_x-\mathbf{x}_*\|_x)O(\|\delta x\|_x^2).
 \]
\item The second term $w_2(\delta n) := p_x\Psi_y''(0_y)(\Ci'(0_x)\delta n,\Ci'(0_x)\delta n)$ only affects normal directions, but it does not vanish at a point that satisfies the first order optimality conditions. So it may affect the acceptance criteria of a globalization scheme and slow down local convergence, because certainly 
\[
w_2(\delta n) \sim O(\|\delta n\|_x^2).
\]
In practice, one often observes $\|\delta n\|_x\ll \|\delta x\|_x$ during local convergence, which would make this term small again, relative to $\|\delta x\|^2_x$, but this behaviour is hard to guarantee theoretically. 
\end{itemize}

For later reference, we state the following particular case:
\begin{prop}\label{cubicBoundModel1}
 Assume that \eqref{eq:Lipschitzf}, \eqref{eq:Lipschitzc1}, $\eqref{eq:LipschitzL}$, and \eqref{eq:boundedRetraction} hold at $x\in X$ with constants $\omega_{\Fp'}$, $\omega_{\Cp}$, $\omega_{\Lp}$ and $M_\Phi$. Assume that $(\Sd,\Sp)$ is second order consistent.
Then for sufficiently small $\delta x$ and simplified normal step $\delta s$ as defined in $(\ref{simplifiednormstepcharts})$ there is a constant $\omega_{\Fp}$ such that we have the estimate:
\begin{align*}
\vert \Fp(\delta x+\delta s)- \qd(\delta x) \vert
&\le \frac{\omega_{\Fp}}{6}\|\delta x\|_x^3+\|\Li'(0_x,p_x)\|_{x,*} M_\Phi\|\delta x\|_x^2. 
\end{align*}
\end{prop}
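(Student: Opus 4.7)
The statement is essentially a specialization of Lemma~\ref{ModelEstimate} combined with Lemma~\ref{prop:LipschitzConsistency}, exploiting second order consistency of the stratifications. The plan has four steps.

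First, I would apply Lemma~\ref{ModelEstimate} to write
\[
\Fp(\delta x+\delta s)-\qd(\delta x) = \rp(\delta x)+\ssp(\delta x)+\tfrac{1}{2}\Li'(0_x,p_x)\Phi_x''(0_x)(\delta x,\delta x)+\tfrac{1}{2}p_x\Psi_y''(0_y)(\Ci'(0_x)\delta n,\Ci'(0_x)\delta n),
\]
and immediately eliminate the last term via Definition~\ref{def:pairs}: second order consistency of $(\Sd,\Sp)$ means $\Psi_y''(0_y)=0$, so the offending $w_2$ contribution vanishes.

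Second, I would bound the $\Phi_x''$ term by factoring through the dual norm and using the assumption \eqref{eq:boundedRetraction}: for any $v,w$ we have $\|\Phi_x''(0_x)(v,w)\|_x \le M_\Phi\|v\|_x\|w\|_x$, and the dual pairing gives $|\Li'(0_x,p_x)\Phi_x''(0_x)(\delta x,\delta x)| \le \|\Li'(0_x,p_x)\|_{x,*}M_\Phi\|\delta x\|_x^2$. This produces the second summand of the desired bound (with the $\tfrac12$ absorbed into the coefficient, since no sharper constant is claimed).

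Third, I would apply Lemma~\ref{prop:LipschitzConsistency} to handle the higher-order remainders $\rp(\delta x)$ and $\ssp(\delta x)$. Directly, $|\rp(\delta x)|\le \tfrac{\omega_{\Lp}}{6}\|\delta x\|_x^3$. For $\ssp$, the lemma gives $|\ssp(\delta x)|\le \tfrac{\omega_{\Fp'}}{2}\|\delta s\|_x(2\|\delta x\|_x+\|\delta s\|_x)$, and the estimate $\|\delta s\|_x\le \tfrac{\omega_{\Cp}}{2}\|\delta x\|_x^2$ lets me write this as $\le \tfrac{\omega_{\Fp'}\omega_{\Cp}}{2}\|\delta x\|_x^3(1+\tfrac{\omega_{\Cp}}{4}\|\delta x\|_x)$. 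For $\|\delta x\|_x$ sufficiently small (say, $\le 4/\omega_{\Cp}$), the bracket is bounded by $2$, giving a clean cubic estimate.

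Finally, I would collect the cubic contributions into a single constant $\omega_{\Fp}$ by choosing, e.g., $\omega_{\Fp}:=\omega_{\Lp}+6\omega_{\Fp'}\omega_{\Cp}$ (or any adequate majorant of the sum of coefficients multiplied by $6$), which yields the stated inequality. There is no real obstacle here: the only mild point is ensuring the $\|\delta s\|_x^2$ contribution in $\ssp$, which is of order $\|\delta x\|_x^4$, is properly swallowed by the cubic term, which requires the \emph{sufficiently small $\delta x$} qualifier in the hypothesis; everything else is a direct assembly of already proven estimates.
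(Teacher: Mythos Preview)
Your proposal is correct and follows precisely the approach the paper intends: the proposition is stated without an explicit proof because it is an immediate consequence of Lemma~\ref{ModelEstimate} (the decomposition), Definition~\ref{def:pairs} (killing the $\Psi_y''$ term), assumption~\eqref{eq:boundedRetraction} (bounding the $\Phi_x''$ term), and Lemma~\ref{prop:LipschitzConsistency} (the cubic remainders), exactly as you outline. Your handling of the factor $\tfrac12$ and the absorption of the $O(\|\delta x\|_x^4)$ contribution from $\ssp$ into the cubic term via the smallness hypothesis is also right.
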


\subsection{A hybrid second order model}

We have seen that error of the standard quadratic model $\qd(\delta x)$ with respect to $\Fp(\delta x+\delta s)$ is only $O(\|\delta x\|_x^2)$, if the stratifications $(\Sd[y],\Sp[y])$ are not second order consistent. This is unsatisfactory, because second order consistency of $(\Sd[y],\Sp[y])$ is not required for fast local convergence of a plain SQP method. 
We will overcome this difficulty by introducing a hybric model $\qt$ that exhibits the desired properties. 

If $(\Sd[y],\Sp[y])$ is second order consistent, then we can use $\qd$ as a model. 
However, for the case when $(\Sd[y],\Sp[y])$ are not second order consistent, we propose to use the following hybrid model for $\delta n=\nu \Delta n$ with $\nu\in ]0,1]$ (a damping factor, which will be used for globalization):
\begin{equation}\label{eq:surrogateModel}
\begin{split}
 \qt(\delta n)(\delta t) &:= \Lp(\delta n,p_x)-(1-\nu)p_x\C(0_x)+(\Fi'(0_x)+\Ld''(0_x,p_x)\delta n)\delta t+\frac12 \Ld''(0_x,p_x)(\delta t,\delta t)\\
 &=\Fp(\delta n)+p_x(\Cp(\delta n)-(1-\nu)\C(0_x))+(\Fi'(0_x)+\Ld''(0_x,p_x)\delta n)\delta t+\frac12 \Ld''(0_x,p_x)(\delta t,\delta t).
 \end{split}
\end{equation}
We will see that this yields a better approximation of $\Fp$ along normal direction. 
\begin{lemma}\label{surrogate-model}
  For the surrogate model $\qt$, we have that:
 \begin{align}\label{eq:surrogate1}
  \qt(\delta n)(\delta t)-\qd(\delta x)
  &=\rp(\delta n)+\frac12\left(\Li'(0_x,p_x)\Phi_x''(0_x)(\delta n,\delta n)+p_x\Psi_y''(0_y)(\Ci'(0_x)\delta n,\Ci'(0_x)\delta n)\right). 
  \end{align}
  In particular, for fixed $\delta n$:  
  \[
   \underset{\delta t \in \ker\,\Ci'(0_x)}{\mathrm{argmin}}\,\qt(\delta n)(\delta t)=\underset{\delta t \in \ker\,\Ci'(0_x)}{\mathrm{argmin}}\,\qd(\delta n+\delta t).
  \]
  \end{lemma}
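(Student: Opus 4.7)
I would prove this by straightforward algebraic manipulation combined with one previously established identity.

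My plan is to directly compute $\qt(\delta n)(\delta t)-\qd(\delta n+\delta t)$ from the definitions, expand $\qd$ using $\delta x = \delta n + \delta t$, and verify that most mixed and tangential-quadratic terms cancel. Concretely, expanding $\qd(\delta n+\delta t)$ produces the terms $\Fi'(0_x)\delta t$, $\Ld''(0_x,p_x)(\delta n,\delta t)$, and $\tfrac12\Ld''(0_x,p_x)(\delta t,\delta t)$, all of which are exactly matched (with opposite sign) by the corresponding terms in $\qt(\delta n)(\delta t)$. What survives is an expression involving only $\delta n$:
\[
  \qt(\delta n)(\delta t) - \qd(\delta n+\delta t) = \Lp(\delta n,p_x) - \LL(0_x,p_x) + \nu\,p_x\C(0_x) - \Fi'(0_x)\delta n - \tfrac12\Ld''(0_x,p_x)(\delta n,\delta n),
\]
where I have used $\Lp(\delta n,p_x)=\Fp(\delta n)+p_x\Cp(\delta n)$ and $\LL(0_x,p_x)=\Fi(0_x)+p_x\C(0_x)$ together with the constant $-(1-\nu)p_x\C(0_x)$ from $\qt$.

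The crucial step is to recognize that because $\delta n = \nu\Delta n = -\nu\,\Ci'(0_x)^{-}\Cp(0_x)$, we have $\Ci'(0_x)\delta n = -\nu\C(0_x)$, which turns the surviving term $\nu\,p_x\C(0_x)$ into $-p_x\Ci'(0_x)\delta n$. Combined with $-\Fi'(0_x)\delta n$ this gives precisely $-\Li'(0_x,p_x)\delta n$. Adding and subtracting $\tfrac12\Lp''(0_x,p_x)(\delta n,\delta n)$, the right-hand side becomes
\[
  \underbrace{\Lp(\delta n,p_x)-\LL(0_x,p_x)-\Li'(0_x,p_x)\delta n-\tfrac12\Lp''(0_x,p_x)(\delta n,\delta n)}_{=\rp(\delta n)} \;-\;\tfrac12\bigl(\Ld''-\Lp''\bigr)(0_x,p_x)(\delta n,\delta n).
\]

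To finish, I would invoke Lemma~\ref{lem:differenceOfLxx} (equation \eqref{eq:differenceOfLxx}) applied to the pair $(\Rd,\Rp)$ and $(\Sd,\Sp)$, which expresses $\Lp''-\Ld''$ in terms of $\Phi_x''(0_x)$ and $\Psi_y''(0_y)$ and hence yields the stated right-hand side
$\rp(\delta n)+\tfrac12(\Li'(0_x,p_x)\Phi_x''(0_x)(\delta n,\delta n)+p_x\Psi_y''(0_y)(\Ci'(0_x)\delta n,\Ci'(0_x)\delta n))$.
The argmin statement is then immediate: the difference $\qt(\delta n)(\delta t)-\qd(\delta n+\delta t)$ depends only on $\delta n$, so for fixed $\delta n$ the two functionals of $\delta t$ differ by a constant on the affine space $\delta n+\ker\Ci'(0_x)$, and therefore have identical minimizers over $\delta t\in\ker\Ci'(0_x)$.

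The only non-routine point—and the main thing to get right—is the sign bookkeeping when applying \eqref{eq:differenceOfLxx}: one must use the same ordering convention $\Phi_x=\Rd^{-1}\circ\Rp$, $\Psi_y=\Sp\circ\Sd^{-1}$ as in the proof of Lemma~\ref{ModelEstimate}, so that $\Ld''-\Lp''$ picks up the negative of the $\Li'\Phi_x''+p_x\Psi_y''$ expression and the two halves combine with the $-\tfrac12$ in front to produce the $+\tfrac12$ in the claimed identity. Apart from this sign tracking, the argument is a pure bookkeeping exercise.
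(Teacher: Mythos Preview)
Your proof is correct and follows essentially the same approach as the paper: you compute $\qt(\delta n)(\delta t)-\qd(\delta x)$ directly, use $\Ci'(0_x)\delta n=-\nu\Cp(0_x)$ to convert $\nu p_x\C(0_x)$ into $-p_x\Ci'(0_x)\delta n$, recognize $\rp(\delta n)$ after adding and subtracting $\tfrac12\Lp''(0_x,p_x)(\delta n,\delta n)$, and then apply \eqref{eq:differenceOfLxx} with the correct ordering convention. The paper organizes the computation slightly differently (it starts from the combination $\Lp(\delta n,p_x)-\LL(0_x,p_x)+\qd(\delta x)-\tfrac12\Ld''(0_x,p_x)(\delta n,\delta n)$ and shows it equals $\qt$, and it also explicitly uses $\Li'(0_x,p_x)\delta n=0$ from \eqref{Computation_Lag_multiplier} rather than carrying this term into $\rp(\delta n)$), but the algebraic content and the key ingredients are identical.
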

\begin{proof}
By definition of $\qd(v)$ we obtain,  using the fact that $\nu p_x\C(0_x)=-p_x \Ci'(0_x)\delta n = \Fi'(0_x)\delta n $
 \begin{align*}
 \Lp(\delta n,p_x)&-\LL(0_x,p_x) + \qd(\delta x) - \frac12 \Ld''(0_x,p_x)\delta n^2 \\
 &=\Lp(\delta n,p_x)- \Fi(0_x)- p_x\C(0_x) +\Fi(0_x)+\Fi'(0_x)\delta x + \frac12 \Ld''(0_x,p_x)\delta x^2 - \frac12 \Ld''(0_x,p_x)\delta n^2\\
   &= \Lp(\delta n,p_x)+(\nu-1)p_x\C(0_x)+ \Fi'(0_x)\delta t + \frac12 \Ld''(0_x,p_x)( \delta x+ \delta n, \delta t ) =\qt(\delta n)(\delta t).
 \end{align*}
Taking into account
\begin{align*}
 \Lp(\delta n,p_x)-\LL(0_x,p_x)&=\rp(\delta n)+\Li'(0_x,p_x)\delta n+\frac12 \Lp''(0_x,p_x)\delta n^2
 =\rp(\delta n)+\frac12 \Lp''(0_x,p_x)\delta n^2
\end{align*}
we obtain:
\begin{align*}
 \qt(\delta n)(\delta t)-\qd(\delta x)=\rp(\delta n)+\frac12\left(\Lp''(0_x,p_x)\delta n^2-\Ld''(0_x,p_x)\delta n^2\right)
\end{align*}
Now~\eqref{eq:differenceOfLxx} yields \eqref{eq:surrogate1}.
Obviously, the difference in \eqref{eq:surrogate1} is independent of $\delta t$. So $\qd$ and $\qt$ have the same minimizer in tangential direction.  
\end{proof}

\begin{lemma}\label{surrogate-estimate}
 For the surrogate model $\qt$, we have the identity
 \begin{align*}
  \Fp(\delta x +\delta s)- \qt(\delta n)(\delta t) =&\rp(\delta x) - \rp(\delta n) + \ssp(\delta x)+ \frac12 \Li'(0_x,p_x)\Phi_x''(0_x)(\delta t,\delta x+\delta n).
 \end{align*}
\end{lemma}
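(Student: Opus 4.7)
The plan is to obtain the identity by straightforward subtraction, exploiting the two lemmas we have just proved. First I would write
\[
\Fp(\delta x+\delta s)-\qt(\delta n)(\delta t)=\bigl[\Fp(\delta x+\delta s)-\qd(\delta x)\bigr]-\bigl[\qt(\delta n)(\delta t)-\qd(\delta x)\bigr],
\]
and then substitute the expressions for the bracketed differences supplied by Lemma~\ref{ModelEstimate} and Lemma~\ref{surrogate-model} respectively.

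The nice thing is that the two $\Psi_y''$-contributions are \emph{identical} (both involve $\Ci'(0_x)\delta n$, because $\Ci'(0_x)\delta x=\Ci'(0_x)\delta n$), so they cancel when one takes the difference. What remains on the $\Phi_x''$-side is
\[
\tfrac12\,\Li'(0_x,p_x)\bigl[\Phi_x''(0_x)(\delta x,\delta x)-\Phi_x''(0_x)(\delta n,\delta n)\bigr],
\]
and the rest of the work reduces to simplifying this expression. Here I use bilinearity together with symmetry of the second derivative $\Phi_x''(0_x)$ and the decomposition $\delta x=\delta n+\delta t$:
\[
\Phi_x''(0_x)(\delta x,\delta x)-\Phi_x''(0_x)(\delta n,\delta n)=2\Phi_x''(0_x)(\delta n,\delta t)+\Phi_x''(0_x)(\delta t,\delta t)=\Phi_x''(0_x)(\delta t,2\delta n+\delta t).
\]
Since $2\delta n+\delta t=\delta n+(\delta n+\delta t)=\delta n+\delta x$, this collapses to $\Phi_x''(0_x)(\delta t,\delta x+\delta n)$, which gives exactly the expression claimed in the statement, together with $\rp(\delta x)-\rp(\delta n)+\ssp(\delta x)$ from the residual terms.

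I do not foresee a genuine obstacle: the proof is purely algebraic bookkeeping on top of the two previous lemmas. The only place that requires a moment of care is the use of symmetry of $\Phi_x''(0_x)$ (allowed because $\Phi_x\in C^2$) in order to collect $2\Phi_x''(0_x)(\delta n,\delta t)$ from the cross terms, and the observation $\Ci'(0_x)\delta x=\Ci'(0_x)\delta n$ that is needed to kill the two $\Psi_y''$-terms cleanly. Once these are in place, the identity drops out without further estimates.
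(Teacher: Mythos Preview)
Your proposal is correct and follows essentially the same route as the paper: decompose via $\qd(\delta x)$, substitute the two previous lemmas, cancel the identical $\Psi_y''$-terms, and simplify the remaining $\Phi_x''$-difference. The only cosmetic difference is that the paper factors $\Phi_x''(0_x)(\delta x,\delta x)-\Phi_x''(0_x)(\delta n,\delta n)=\Phi_x''(0_x)(\delta x-\delta n,\delta x+\delta n)$ directly via the difference-of-squares identity for symmetric bilinear forms, whereas you expand using $\delta x=\delta n+\delta t$; both rely on symmetry of $\Phi_x''(0_x)$ and arrive at the same expression.
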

\begin{proof}
By Lemma~\ref{ModelEstimate} and Lemma~\ref{surrogate-model} we compute
\begin{align*}
 \Fp(\delta x+\delta s)-\qt(\delta n)(\delta t)&=(\Fp(\delta x+\delta s)- \qd(\delta x))-(\qt(\delta n)(\delta t)-\qd(\delta x))\\
 =\rp(\delta x)+\ssp(\delta x)&+\frac12\left(\Li'(0_x,p_x)\Phi_x''(0_x)(\delta x,\delta x)+p_x\Psi_y''(0_y)(\Ci'(0_x)\delta n,\Ci'(0_x)\delta n)\right)\\
 -\rp(\delta n)&-\frac12\left(\Li'(0_x,p_x)\Phi_x''(0_x)(\delta n,\delta n)+p_x\Psi_y''(0_y)(\Ci'(0_x)\delta n,\Ci'(0_x)\delta n)\right)\\
 &=\rp(\delta x)+\ssp(\delta x)-\rp(\delta n)+\frac12 \Li'(0_x,p_x)\left(\Phi_x''(0_x)(\delta x,\delta x)-\Phi_x''(0_x)(\delta n,\delta n)\right). 
\end{align*}
The crucial observation is that $p_x\Psi_y''(0_y)(\Ci'(0_x)\delta n,\Ci'(0_x)\delta n)$ cancels out. Finally, since $\Phi_x''$ is bilinear, we compute
\[
 \Phi_x''(0_x)(\delta x,\delta x)-\Phi_x''(0_x)(\delta n,\delta n)=\Phi_x''(\delta x-\delta n,\delta x+\delta n),
\]
which yields the stated result. 
%
%
\end{proof}

Combination with Lemma~\ref{prop:LipschitzConsistency} yields the following result:
\begin{prop}\label{cubicBoundModel}
 Assume that \eqref{eq:Lipschitzf}, \eqref{eq:Lipschitzc1}, $\eqref{eq:LipschitzL}$, and \eqref{eq:boundedRetraction} hold at $x\in X$ with constants $\omega_{\Fp'}$, $\omega_{\Cp}$, $\omega_{\Lp}$ and $M_\Phi$.
Then for sufficiently small $\delta x$ and simplified normal step $\delta s$ as defined in $(\ref{simplifiednormstepcharts})$ we have the estimate:
\begin{align*}
\vert \Fp(\delta x+\delta s)- \qt(\delta n )(\delta t) \vert
&\leq  \frac{\omega_{\Fp}}{6}\|\delta x\|_x^3+\frac{1}{2}\vert\Li'(0_x,p_x)\Phi_x''(0_x)(\delta t,\delta x+\delta n) \vert\\
&\le \frac{\omega_{\Fp}}{6}\|\delta x\|_x^3+\|\Li'(0_x,p_x)\|_{x,*} M_\Phi\|\delta x\|_x^2. 
\end{align*}
\end{prop}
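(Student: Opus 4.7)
The plan is to combine the identity established in Lemma~\ref{surrogate-estimate} with the cubic estimates of Lemma~\ref{prop:LipschitzConsistency} and the boundedness assumption~\eqref{eq:boundedRetraction} on $\Phi_x''$. Roughly, the first three summands in the identity from Lemma~\ref{surrogate-estimate} are purely cubic (they are exactly the $\rp$ and $\ssp$ terms), so they can be folded into a single $\frac{\omega_{\Fp}}{6}\|\delta x\|_x^3$ bound; the remaining $\Phi_x''$ term is estimated separately and gives the quadratic contribution.

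First I would apply Lemma~\ref{surrogate-estimate} to write
\[
 \Fp(\delta x+\delta s) - \qt(\delta n)(\delta t) = \rp(\delta x) - \rp(\delta n) + \ssp(\delta x) + \tfrac12 \Li'(0_x,p_x)\Phi_x''(0_x)(\delta t,\delta x+\delta n),
\]
and then apply the triangle inequality. For the three cubic residuals, Lemma~\ref{prop:LipschitzConsistency} yields $|\rp(\delta x)|\le\tfrac{\omega_\Lp}{6}\|\delta x\|_x^3$ and $|\ssp(\delta x)|=O(\|\delta x\|_x^3)$ with an explicit constant involving $\omega_{\Fp'}$ and $\omega_{\Cp}$; for $|\rp(\delta n)|$ I use the same bound but with $\|\delta n\|_x$ in place of $\|\delta x\|_x$, and note that the orthogonal splitting $\delta x=\delta n+\delta t$ gives $\|\delta n\|_x\le\|\delta x\|_x$. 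Collecting these three estimates defines a constant $\omega_{\Fp}$ (depending on $\omega_{\Fp'}, \omega_{\Cp}, \omega_\Lp$) such that the sum is bounded by $\tfrac{\omega_{\Fp}}{6}\|\delta x\|_x^3$.

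For the remaining term, the dual estimate $|\Li'(0_x,p_x)\Phi_x''(0_x)(\delta t,\delta x+\delta n)|\le \|\Li'(0_x,p_x)\|_{x,*}\,\|\Phi_x''(0_x)(\delta t,\delta x+\delta n)\|_x$ combined with \eqref{eq:boundedRetraction} gives the bound $\|\Li'(0_x,p_x)\|_{x,*}M_\Phi\|\delta t\|_x\|\delta x+\delta n\|_x$. Orthogonality yields $\|\delta t\|_x\le\|\delta x\|_x$ and, via $\|\delta t+2\delta n\|_x^2=\|\delta t\|_x^2+4\|\delta n\|_x^2\le 4\|\delta x\|_x^2$, the estimate $\|\delta x+\delta n\|_x\le 2\|\delta x\|_x$. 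Together with the factor $\tfrac12$ in front, this absorbs the $2$ and produces exactly the stated quadratic bound $\|\Li'(0_x,p_x)\|_{x,*}M_\Phi\|\delta x\|_x^2$.

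No real obstacle arises here since the proposition is essentially a mechanical combination of two preceding lemmas; the only point requiring care is the factor of $2$ in $\|\delta x+\delta n\|_x\le 2\|\delta x\|_x$, which would be wasteful if $\|\delta n\|_x$ is much smaller than $\|\delta x\|_x$ but is exactly what is needed to match the coefficient $1$ in front of $\|\Li'(0_x,p_x)\|_{x,*}M_\Phi\|\delta x\|_x^2$ after the $\tfrac12$ factor from Lemma~\ref{surrogate-estimate} is applied.
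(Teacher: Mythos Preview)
Your proposal is correct and follows essentially the same approach as the paper, which simply says the result ``follows directly from Lemma~\ref{prop:LipschitzConsistency} and Lemma~\ref{surrogate-estimate} by adding up the relevant terms.'' You have spelled out those additions carefully, including the orthogonality arguments $\|\delta n\|_x,\|\delta t\|_x\le\|\delta x\|_x$ and the estimate $\|\delta x+\delta n\|_x\le 2\|\delta x\|_x$ needed to reach the final coefficient.
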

\begin{proof}

This follows directly from Lemma~\ref{prop:LipschitzConsistency} and Lemma~\ref{surrogate-estimate} by adding up the relevant terms. 
\end{proof}

\paragraph{Discussion.}
We conclude that $\qt$ has all the properties we need:
\begin{itemize}
 \item By Lemma~\ref{surrogate-model} minimizers of $\qt$ for given $\delta n$ are the tangent steps $\Delta t$, defined in \eqref{Tangential_STP} and can thus be computed as before.
  \item By Proposition~\ref{cubicBoundModel} the consistency properties of $\qt$ are superior to the ones of $\qd$. The reason is that $\Li'(0_x,p_x)$ becomes small close to a point that satisfies the first order optimality conditions, rendering the error term quasi second order locally, just as discussed in the last subsection. We will show below that this is sufficient for transition to fast local convergence. 

 \item  As a slight disadvantage, the use of $\qt$ implies an additional computational cost, namely the evaluation of $\Lp(\delta n,p_x)$. In practice, this is negligible, compared to the remaining computational efforts. Also, evaluation of $\Fp$ and $\Cp$ at points other than $0_x$ has to be implemented anyway, so the evaluation of $\Lp(\delta n,p_x)$ can be added easily to existing code. 

  \end{itemize}

%
%

\section{Globalization by a Composite Step Method}\label{sec:global}

In the last section we established results, which are important ingredients for showing transition to fast local convergence of any SQP-globalization scheme. In this section, we will demonstrate for a specific example of a globalization scheme that transition to fast local convergence can indeed be shown. 

Classical globalization strategies in equality constrained optimization on linear spaces rely on a combination of functional descent and reduction of residual norm $\|c(x)\|$.  For example, merit functions of the form $\phi(x) := f(x)+\kappa\|c(x)\|$ weigh both quantities with a parameter $\kappa$ that is adjusted adaptively. In our setting, where $c(x)\in Y$ is an element of a manifold, $\|c(x)\|$ cannot be used straightforwardly. One possible surrogate would be to compute the geodesic distance $d(c(x),y_*)$, which however, would involve computations of geodesics on $Y$. Another, more practical way would be to use stratifications and norms $\|0_{c(x)}-\ystar\|_{c(x)}$ on $T_{c(x)} Y$. This means, however, that the norm changes in each step of the algorithm, which may pose additional difficulties in a proof of global convergence.   

In~\cite[Section 4]{lubkoll2017affine} a globalization scheme has been proposed for an affine covariant composite step method. It is based on the idea of affine covariant Newton methods, a globalization strategy that relies solely on evaluation of norms in the space of iterates (cf. \cite{deuflhard2011newton}). While lacking a rigorous proof of global convergence, this strategy has a clear theoretical motivation and has shown very robust behaviour in practical problems. 

\subsection{Affine covariant globalization}

In the following we will recapitulate the main features of the affine covariant composite step method, introduced in \cite{lubkoll2017affine}, and adjust it to the case of manifolds, where necessary. Since our aim is to study local convergence of our algorithm, we concentrate on the aspects of our scheme that are relevant for local convergence. Readers, interested in further algorithmic details are referred to \cite{lubkoll2017affine}.

Each step of the globalization scheme at a current  iterate $x$ will be performed on $T_x X$ and $T_y Y$, where $y=c(x)$. In accordance with Assumption~\ref{ass:retractions} we use local retractions and stratifications $\Rd,\Sd$ to compute local models,
a local retractions $\Rp : V_x \to X$ and a global stratification $\Sp$ to compute residuals and updates. We assume that $V_x$ is closed. In practice, $V_x=T_x X$ is desirable, and for simplicity of compuations, $V_x$ should be convex. 
Then the globalization scheme from~\cite{lubkoll2017affine} can be applied to the pullback of the problem. 

As elaborated in \cite{lubkoll2017affine} we use the algorithmic parameters 
$[\omega_{\Cp}]$ to capture the nonlinearity of $c$, and $[\omega_{\Fp}]$ to capture the nonlinearity of $f$. The square brackets indicate that these a-posteriori are estimates for corresponding theoretical quantities $\omega_{\Cp}$ and $\omega_{\Fp}$, which occur in~\eqref{eq:Lipschitzc1}, and Propositions~\ref{cubicBoundModel1} and~\ref{cubicBoundModel}. Simply speaking, large values of these parameters indicate a highly nonlinear problem, so a globalization scheme should restrict step sizes to avoid divergence.  Initial estimates have to be provided for these parameters, which will be updated in each step (similar to a trust-region parameter).

\paragraph{Computation of a composite step.}
We start by computing the full normal step $\Delta n$, and compute a maximal damping factor $\nu \in ]0,1]$ and $\delta n := \nu \Delta n$, such that
\begin{equation}\label{eq:ellbow}
 \frac{[\omega_{\Cp}]}{2}\|\delta n\|_x=\frac{[\omega_{\Cp}]}{2}\nu\|\Delta n\|_x \le \rho_{\rm ellbow}\Theta_{\rm aim}, \; \mbox{ and }\; \delta n\in V_x 
\end{equation}
Here $\Theta_{\rm aim}\in ]0,1[$ is a desired Newton contraction for the underdetermined problem $\Cp(x)-\ystar=0$
and $\rho_{\rm ellbow}\in ]0,1]$ provides some ellbow space in view of the last line of \eqref{eq:pullbackedProblem}, below.
Then, $p_x$ is computed via~\eqref{eq:pKKT}, so that $\Delta t$ can be computed via~\eqref{Tangential_STP}. If $\Ld''(0_x,p_x)$ does not satisfy~\eqref{eq:ellipticSSC}, then a suitably modified solution (e.g. via truncated cg or  Hessian modification) can be used.  

For a given current iterate $x\in X$, for algorithmic parameters $[\omega_{\Fp}]$ and $[\omega_{\Cp}]$, and after $\delta n := \nu \Delta n$, $p_x$, and $\Delta t$ have been computed, we solve the following problem in $\tau \in \R$ to obtain a trial correction $\delta x=\nu \Delta n
+\tau \Delta t$: 
\begin{align}
\begin{split}\label{eq:pullbackedProblem}
\min_{\tau : \delta x=\nu \Delta n+\tau \Delta t}  \Fi(0_x)+\Fi'(0_x)\delta x +\frac{1}{2} \Ld''(0_x,p_x) (\delta x,\delta x) &+\frac{[\omega_{\Fp}]}{6} \| \delta x \|_x^3 \\
\, s.t. \,\, \nu\Cp(0_x)+\Ci'(0_x)\delta x=&0, \\
\frac{[\omega_{\Cp}]}{2}\|\delta x \|_x\leq \Theta_{\rm aim} \mbox{ and } \delta x\in V_x
\end{split}
\end{align}
 This one-dimensional problem is easy to solve. More sophisticated strategies to compute $\delta t$ directly as an approximate minimizer of the cubic model are conceivable and have been described in the literature (cf. e.g.~\cite{Cartis2011}). We observe that the step is restricted in two ways to achieve globalization: the third line of \eqref{eq:pullbackedProblem} restricts $\|\delta x\|_x$ by a trust-region like constraint, taking into account the nonlinearity of $\Cp$, represented by $[\omega_{\Cp}]$, which promotes feasibility. The cubic term in the first line penalizes large steps, taking into account the nonlinearity of $\Fp$, represented by $[\omega_{\Fp}]$ and thus promotes descent in the objective.  

\paragraph{Update of algorithmic quantities.} After $\delta x = \delta n+\tau \Delta t$
has been computed as a minimizer of~\eqref{eq:pullbackedProblem}, the simplified normal step 
$\delta s$ is computed via~\eqref{simplifiednormstepcharts}. Moreover, if $V_x \neq T_x X$, we compute the largest $\sigma \in [0,1]$, such that $\delta x+\sigma \delta s \in V_x$. 
At this point $[\omega_{\Cp}]$ and $[\omega_{\Fp}]$ can be updated. In view of \eqref{eq:estds} we define just as in~\cite{lubkoll2017affine}:
\begin{equation}\label{eq:estimateomegac}
 [\omega_{\Cp}]^{\rm new} := \frac{2}{\|\delta x\|_x^2}\|\delta s\|_x
\end{equation}
as an algorithmic quantity that locally estimates the affine covariant Lipschitz constant $\omega_{\Cp}$ defined in \eqref{eq:Lipschitzc1}. This update rule and  \eqref{eq:estds} imply that  $[\omega_{\Cp}]\le \omega_{\Cp}$ during the algorithm.  

Concerning $[\omega_{\Fp}]$, the use of retractions requires a modification, compared to~\cite{lubkoll2017affine}. We first define
\[
 \q(\delta x) := \left\{
 \begin{array}{rcl}
    \qd(\delta x) & :& \mbox{if }(\Sd,\Sp) \mbox{ is second order consistent }\\
    \qt(\delta n)(\delta t) & :&   \mbox{otherwise }
 \end{array}
 \right.
\]
and then set, in view of Proposition~\ref{cubicBoundModel1} and~\ref{cubicBoundModel}:
\[
 [\omega_{\Fp}]^{\rm raw} := \frac{6}{\|\delta x\|_x^3}(\Fp(\delta x+\sigma \delta s)-\q(\delta x)) 
\]
This (potentially negative) estimate has to be augmented by some save-guard bounds of the form
\[
 [\omega_{\Fp}]^{\rm new} = \min\{\overline b [\omega_{\Fp}]^{\rm old},\max\{\underline b [\omega_{\Fp}]^{\rm old}, [\omega_{\Fp}]^{\rm raw}\}\}
\]
with $0 < \underline b < 1 < \overline b$. This prohibits that $[\omega_{\Fp}]$ is changed too rapidly. 
 
\paragraph{Acceptance criteria.} For acceptance of iterates, we perform a contraction test and a decrease test. The contraction test requires, just as in~\cite{lubkoll2017affine},
\begin{equation}\label{eq:acceptanceC2}
 \frac{\|\delta s\|_x}{\|\delta x\|_x} \le \Theta_{\rm acc} 
\end{equation}
for acceptance, with some parameter $\Theta_{\rm acc} \in ]\Theta_{\rm des},1[$. For a short motivation, consider the case $\nu =1$. Then $\delta s = -\Ci'(0_x)^{-}\Cp(\delta x)$, and $\delta s$ can be seen as the second step of a \emph{simplified Newton method}, applied to the problem $\Cp(\delta x)=0$. 
Hence, the left hand side of \eqref{eq:acceptanceC2} can be seen as an estimate the ratio of contraction of this method and thus gives an indication if $x$ is close to being feasible. The choice of $\Theta_{\rm acc}$ imposes a requirement on this local rate of contraction. If $\Theta_{\rm acc}\approx 0$ the algorithm tends to stay in a small neighbourhood of the feasible set, taking smaller steps, while for $\Theta_{\rm acc}\approx 1$ the algorithm operates in a larger neighbourhood of the feasible set and takes more aggressive steps. 
Again, we refer to the detailed exposition~\cite{deuflhard2011newton} on affine covariant Newton methods and to~\cite{lubkoll2017affine} for a thorough discussion.

For the decrease test
we define the cubic model
\[
 \mathbf{m}_{[\omega_{\Fp}]}(v) := \q(v)+\frac{[\omega_{\Fp}]}{6}\|v\|_x^3.
\]
and require a ratio of actual decrease and predicted decrease condition. We choose $\underline \eta \in ]0,1[$ and define:
\begin{equation}\label{eq:decrease}
 \eta := \frac{\Fp(\delta x+\sigma \delta s)-\mathbf{m}_{[\omega_{\Fp}]}(\delta n)}{\mathbf{m}_{[\omega_{\Fp}]}(\delta x)-\mathbf{m}_{[\omega_{\Fp}]}(\delta n)}, 
\end{equation}
observe by \eqref{eq:pullbackedProblem} that the denominator is always negative, unless $\tau = 0$, which only occurs, if $\mathbf{f'}(0_x)=0$ on $\ker\,\Ci'(0_x)$. 
Then we require
\begin{equation}\label{eq:acceptanceD}
 \eta \ge \underline \eta \quad \mbox{ for some } \underline \eta \in ]0,1[
\end{equation}
for acceptance of the step.
As a slight modification to \cite{lubkoll2017affine} we increase $[\omega_{\Fp}]$ at least by a fixed factor $\hat b \in ]1,\overline b]$ with respect to $[\omega_{\Fp}]^{\rm old}$, if the decrease condition~\eqref{eq:acceptanceD} fails. Moreover, $[\omega_{\Fp}]$ will not be increased, if $\eta \ge \hat \eta$ for some $\hat \eta \in [\underline \eta,1[$ which is usually chosen close to $1$.

\begin{algorithm}[h!]
  \caption{Affine covariant composite step method (inner loop slightly simplified)}\label{alg:inner_loop_rough}
  \begin{algorithmic}
  \Require initial iterate $x$, $[\omega_{\Cp}], [\omega_{\Fp}]$
  \Repeat // \emph{NLP loop}
  \State compute linear-quadratic models of $f$ and $c$ based on $(\Rd,\Sd)$
  \Repeat // \emph{step computation loop}
    \State compute normal step $\Delta n$ via \eqref{eq:minimumnormKKT}, using  $\Sp$
    \State compute Lagrange multiplier estimate $p_x$ via \eqref{eq:pKKT}
    \State compute maximal $\nu\in ]0,1]$, such that \eqref{eq:ellbow} holds
    \State compute tangent step $\Delta t$ via~\eqref{Tangential_STP}
    \State compute $\tau$ via~\eqref{eq:pullbackedProblem} and composite step $\delta x=\nu \Delta n+\tau \Delta t$
    \State compute second order correction $\delta s$, via~\eqref{simplifiednormstepcharts}, using $(\Rp,\Sp)$
    \State evaluate $\Fp(\delta x+\sigma \delta s)$
    \State compute maximal $\sigma \in [0,1]$, such that $\delta x+\sigma \delta s\in V_x$ 
    \State evaluate acceptance tests \eqref{eq:acceptanceC2} and \eqref{eq:acceptanceD}
    \State update Lipschitz estimates $[\omega_{\Cp}], [\omega_{\Fp}]$, using $\delta s$, $\Fp(\delta x+\sigma \delta s )$, and $\q(\delta x)$
    \Until{ $\delta x$ accepted }
    
   $x \gets \Rp(\delta x+\sigma \delta s)$
  \Until{ converged }
  \end{algorithmic}
 \end{algorithm}

 We obtain Algorithm~\ref{alg:inner_loop_rough}, a slightly simplified version of the method, proposed in~\cite{lubkoll2017affine}. Some additional precautions are necessary to guarantee that the inner loop is terminated after finitely many steps. These details, however are not relevant for the following local convergence study. 
 
\subsection{Transition to fast local convergence}

Close to a local minimizer we study, if the computed steps $\delta x_k$ approach to the full Lagrange-Newton steps $\Delta x_k$ asymptotically, and if they inherit local quadratic convergence from these.  
\begin{theo}
 Suppose that Assumption~\ref{ass:local1} and Assumption~\ref{ass:local2} hold in a neighbourhood of a local minimizer $x_{*}$ and assume that Algorithm~\ref{alg:inner_loop_rough} yields a sequence $x_k$ that converges to $x_*$. Then convergence is quadratic in the sense of Theorem~\ref{thm:localNewton}. 
\end{theo}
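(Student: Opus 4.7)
The plan is to show that for $x_k$ sufficiently close to $x_*$, the inner loop of Algorithm~\ref{alg:inner_loop_rough} terminates on its first attempt with an accepted composite step $\delta x+\sigma\delta s$ that differs from the full Lagrange--Newton step $\Delta x$ of~\eqref{eq:Lagrange_Newton} by at most $O(\|0_x-\mathbf{x}_*\|_x^2)$, with $\mathbf{x}_*:=\Rp^{-1}(x_*)$. The orthogonal-splitting estimate from the proof of Proposition~\ref{pro:localNewton}, with this perturbation absorbed into the right-hand side of~\eqref{eq:newtonquad}, then gives local Newton contraction with unchanged exponent, and the non-degeneracy transport argument used in Theorem~\ref{thm:localNewton} lifts it to~\eqref{eq:quadraticconvergence}.

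First I verify that the algorithmic Lipschitz estimates stay uniformly bounded near $x_*$. The update~\eqref{eq:estimateomegac} together with the a-priori bound~\eqref{eq:estds} forces $[\omega_{\Cp}]\le\omega_{\Cp}$ throughout the run. For $[\omega_{\Fp}]$, Proposition~\ref{cubicBoundModel} (or~\ref{cubicBoundModel1} in the second-order consistent case) controls the raw update by $\omega_{\Fp}+6 M_\Phi\|\Li'(0_x,p_x)\|_{x,*}/\|\delta x\|_x$, and Lemma~\ref{lem:auxLipschitz} combined with the eventual comparability $\|\delta x\|_x\sim\|0_x-\mathbf{x}_*\|_x$ makes this $O(1)$; the save-guards $[\underline b,\overline b]$ then propagate an a-priori bound. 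Non-degeneracy of $\Rp$ supplies a uniform $\rho>0$ with $B^x_\rho\subset V_x$, so the $V_x$-membership conditions in~\eqref{eq:ellbow} and~\eqref{eq:pullbackedProblem} become vacuous near $x_*$.

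Next I analyze the step structure. From $\Cp(\mathbf{x}_*)=0$ and uniform boundedness of $\Ci'(0_x)^-$ one has $\|\Delta n\|_x=O(\|0_x-\mathbf{x}_*\|_x)$, so~\eqref{eq:ellbow} is slack and $\nu=1$. Using the orthogonality of $\Delta n$ and $\Delta t$, the first-order condition in $\tau$ for~\eqref{eq:pullbackedProblem} combined with the tangent-step identity~\eqref{Tangential_STP} reduces at $\tau=1$ to a residual of order $[\omega_{\Fp}]\|\Delta x\|_x\|\Delta t\|_x^2$; since the second derivative of the quadratic part is bounded below by $\alpha_{\Ld''}\|\Delta t\|_x^2$ via~\eqref{eq:ellipticity}, the actual minimizer satisfies $|\tau-1|=O(\|\Delta x\|_x)$. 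Therefore $\|\delta x-\Delta x\|_x=|\tau-1|\|\Delta t\|_x=O(\|0_x-\mathbf{x}_*\|_x^2)$, and by~\eqref{eq:estds} also $\|\delta s\|_x=O(\|0_x-\mathbf{x}_*\|_x^2)$; the trust-region constraint is slack because $[\omega_{\Cp}]\|\delta x\|_x/2\to 0$, and $\sigma=1$ follows from $\delta x+\delta s\in V_x$ eventually.

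The decisive obstacle is the decrease test~\eqref{eq:acceptanceD}, which is exactly where a Maratos-type mismatch of the kind analyzed in the preceding section could reject asymptotic Newton steps. Proposition~\ref{cubicBoundModel} bounds $|\Fp(\delta x+\sigma\delta s)-\q(\delta x)|$ by $\tfrac{\omega_{\Fp}}{6}\|\delta x\|_x^3+M_\Phi\|\Li'(0_x,p_x)\|_{x,*}\|\delta x\|_x^2$, and Lemma~\ref{lem:auxLipschitz} reduces the second term to $O(\|\delta x\|_x^3)$ near $x_*$; hence the numerator $\Fp(\delta x+\sigma\delta s)-\mathbf{m}_{[\omega_{\Fp}]}(\delta x)$ of $\eta-1$ is $O(\|\delta x\|_x^3)$. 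For the denominator, the Lax--Milgram identity $\Ld''(0_x,p_x)(\Delta t,\Delta t)=-(\Fi'(0_x)+\Ld''(0_x,p_x)\delta n)\Delta t$ combined with~\eqref{eq:ellipticity} yields $\mathbf{m}_{[\omega_{\Fp}]}(\delta x)-\mathbf{m}_{[\omega_{\Fp}]}(\delta n)\le -\tfrac{\alpha_{\Ld''}}{2}\tau^2\|\Delta t\|_x^2+O(\|\delta x\|_x^3)$, which dominates the numerator as soon as $\|\Delta t\|_x$ is of the same order as $\|\delta x\|_x$; the corner case $\|\Delta t\|_x\ll\|\delta x\|_x$ is handled by observing that then the projected gradient $\Fi'(0_x)|_{\ker\Ci'(0_x)}$ driving $\Li'(0_x,p_x)$ is itself of order $\|\Delta t\|_x$, which pushes the Maratos term in the numerator down by the same factor. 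Therefore $\eta\to 1$ and both~\eqref{eq:acceptanceC2} and~\eqref{eq:acceptanceD} pass for $x_k$ close to $x_*$; combining this with the perturbation bound of the first paragraph, Proposition~\ref{pro:localNewton}, and Theorem~\ref{thm:localNewton} delivers~\eqref{eq:quadraticconvergence}.
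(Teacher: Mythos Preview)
Your overall strategy matches the paper's: show $\nu_k=1$ eventually, derive $|\tau_k-1|=O(\|\delta x_k\|_{x_k})$ from the first-order condition of the cubic subproblem, verify that the acceptance tests pass, and conclude that $\delta x_k+\delta s_k$ differs from $\Delta x_k$ by $O(\|\Delta x_k\|_{x_k}^2)$, so that Proposition~\ref{pro:localNewton} and Theorem~\ref{thm:localNewton} carry over.

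There is, however, a genuine gap in your argument for uniform boundedness of $[\omega_{\Fp}]$. You bound $[\omega_{\Fp}]^{\rm raw}$ by $\omega_{\Fp}+6M_\Phi\|\Li'(0_x,p_x)\|_{x,*}/\|\delta x\|_x$ and then invoke the ``eventual comparability $\|\delta x\|_x\sim\|0_x-\mathbf{x}_*\|_x$'' to make the second term $O(1)$. But that comparability is precisely what is at stake: it requires $\tau_k$ bounded away from zero, and your estimate $|\tau_k-1|=O(\|\Delta x_k\|_x)$ in the next paragraph hides a factor $[\omega_{\Fp}]$ (the residual of the cubic first-order condition at $\tau=1$ is $\tfrac{[\omega_{\Fp}]}{2}\|\delta x\|_x\|\Delta t\|_x^2$, not $O(\|\Delta x\|_x\|\Delta t\|_x^2)$ with a constant independent of $[\omega_{\Fp}]$). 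So the two claims lean on each other and neither is established independently. The safeguards $[\underline b,\overline b]$ only limit the per-step change of $[\omega_{\Fp}]$, not its eventual size.

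The paper breaks this circularity by exploiting the algorithmic rule that $[\omega_{\Fp}]$ is \emph{not increased} whenever $\eta\ge\hat\eta$. Rather than bounding $[\omega_{\Fp}]^{\rm raw}$ directly, the paper keeps the $[\omega_{\Fp}]$-dependence explicit: from the explicit formula~\eqref{eq:esttau} one has $1/\tau_k\le 1+\tfrac{[\omega_{\Fp}]}{2\alpha_{\Ld''}}\|\delta x_k\|_{x_k}$, which together with Proposition~\ref{pro:localNewton} yields
\[
\|0_{x_k}-\mathbf{x}_*\|_{x_k}\le \tilde C\Bigl(1+\tfrac{[\omega_{\Fp}]}{2\alpha_{\Ld''}}\|\delta x_k\|_{x_k}\Bigr)\|\delta x_k\|_{x_k}.
\]
Feeding this into Proposition~\ref{cubicBoundModel} gives $\Fp(\delta x_k+\delta s_k)-\q(\delta x_k)\le C\bigl(1+\tfrac{[\omega_{\Fp}]}{2\alpha_{\Ld''}}\|\delta x_k\|_{x_k}\bigr)\|\delta x_k\|_{x_k}^3$, and comparing this with the cubic term $\tfrac{[\omega_{\Fp}]}{6}\|\delta x_k\|_{x_k}^3$ shows that $\Fp(\delta x_k+\delta s_k)\le\mathbf m_{[\omega_{\Fp}]}(\delta x_k)$, hence $\eta\ge 1$, as soon as $[\omega_{\Fp}]$ exceeds the finite threshold $6C/(1-\tfrac{3C}{\alpha_{\Ld''}}\|\delta x_k\|_{x_k})$. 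Thus $[\omega_{\Fp}]$ cannot grow past this threshold, and boundedness follows. Since this argument establishes $\eta\ge 1$ by comparing $\Fp(\delta x+\delta s)$ to $\mathbf m_{[\omega_{\Fp}]}(\delta x)$ directly, the denominator of~\eqref{eq:decrease} never needs to be estimated and your corner-case analysis of $\|\Delta t\|_x\ll\|\delta x\|_x$ becomes unnecessary.
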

\begin{proof}
 As $x_k \to x_{*}$, we obtain $\|\delta n_k\|_{x_k} \le \|\delta x_k\|_{x_k}\to 0$, because our assumptions include non-degeneracy~\eqref{eq:primalNormEquivalence}.
  We will show that the damping factors $\nu_k$ and $\tau_k$ tend to 1 as $k\to \infty$. 
 
 From Lemma~\ref{prop:LipschitzConsistency} and \eqref{eq:estimateomegac} we infer that $[\omega_{\Cp}]\le\omega_{\Cp}$ remains bounded.
 Hence, it follows from~\eqref{eq:ellbow} that the normal damping factor $\nu_k$ becomes $\nu_{k}=1$  after finitely many steps, and thus $\delta n_k=\Delta n_k$ for all $k$ greater than some $K_0$. Due to Lemma~\ref{prop:LipschitzConsistency} the acceptance test \eqref{eq:acceptanceC2} is passed, if $\delta x$ is sufficiently small. This happens after finitely many steps, because $\delta x_k \to 0$. Thus, in the following we may assume that $k$ is sufficiently large to have $\nu_k=1$ and $\delta x_k$ satisfies the third line of \eqref{eq:pullbackedProblem} and passes  \eqref{eq:acceptanceC2}.

 It remains to show $\lim_{k\to \infty}\tau_k=1$. 
 Since $\Delta x_k$ minimizes $\qd$ on $\Delta n_k+\ker \Ci'(0_{x_k})$, we conclude that $\qd'(\Delta x_k)$ vanishes on $\ker \Ci'(0_{x_k})$. This implies:
\begin{align*}
0=\qd'(\Delta x_k)\Delta t_k&=(\Fi'(0_{x_k})\Delta t_k+\Ld''(0_{x_k},{p}_{x_k})(\Delta x_k,\Delta t_k)\\
&=(\Fi'(0_{x_k})+\Ld''(0_{x_k},{p}_{x_k})\Delta n_k)\Delta t_k+\Ld''(0_{x_k},{p}_{x_k})(\Delta t_k,\Delta t_k). 
\end{align*}
In addition, $\tau_k$ is computed as the minimizer of the first line of \eqref{eq:pullbackedProblem} along the direction $\Delta t_k$. Hence the derivative of this term in direction $\Delta t_k$ vanishes at $\delta x_k=\Delta n_k+\tau \Delta t_k$ and we compute, exploiting $\langle \Delta n_k,\Delta t_k\rangle_{x_k}=0$:
\begin{align*}
0&= \Fi'(0_{x_k})\Delta t_k +\Ld''(0_{x_k},{p}_{x_k})(\delta x_k,\Delta t_k) +\frac{[\omega_{\Fp}]}{2}\|\delta x_{k} \|_{x_k}\left\langle \delta x_{k},\Delta t_k \right\rangle_{x_k} \\ 
&=(\Fi'(0_{x_k})+ \Ld''(0_{x_k},{p}_{x_k})\Delta n_k)\Delta t_k 
 +\tau_k\Big(\Ld''(0_{x_k},{p}_{x_k})(\Delta t_k,\Delta t_k)  +\frac{[\omega_{\Fp}]}{2}\|\delta x_{k} \|_{x_k}\|\Delta t_k\|^2_x\Big).
\end{align*} 
Subtracting these two equations, we obtain
\begin{align}\label{derivative-CubicModel}
\Ld''(0_{x_k},{p}_{x_k})(\Delta t_k,\Delta t_k)=\tau_k\left(\Ld''(0_{x_k},{p}_{x_k})(\Delta t_k,\Delta t_k)+ \frac{[\omega_{\Fp}]}{2}\|\delta x_{k} \|_{x_k}\|\Delta t_k\|_{x_k}^2\right)
\end{align}
  and thus a formula the the damping factor:
  \begin{align}\label{eq:esttau}
  \tau_k=\frac{\Ld''(0_{x_k},{p}_{x_k})(\Delta t_k,\Delta t_k)}{\Ld''(0_{x_k},{p}_{x_k})(\Delta t_k,\Delta t_k)+ \frac{[\omega_{\Fp}]}{2}\|\delta  x_{k} \|_{x_k}\|\Delta t_k\|_{x_k}^2}\ge \frac{1}{1+\frac{[\omega_{\Fp}]}{2\alpha_{\Ld''}}\|\delta  x_{k} \|_{x_k}} 
  \end{align}
  where $\alpha_{\Ld''}$ is the ellipticity constant of $\Ld''(0_x,p_x)$ due to \eqref{eq:ellipticity}. 
  Sufficiently close to $x_*$ we infer by Proposition~\ref{pro:localNewton} and the triangle inequality: 
  \begin{equation*}
    \|0_{x_k}-\mathbf{x}_*\|_{x_k} \le \|\Delta x_k\|_{x_k}+\|0_{x_k}+\Delta x_k-\mathbf x_*\|_{x_k}\le \|\Delta x_k\|_{x_k}+\beta \|0_{x_k}-\mathbf{x}_*\|_{x_k}^2,
  \end{equation*}
  which implies, together with \eqref{eq:esttau}, and $\tilde C \ge (1-\beta \|0_x-\mathbf x_*\|_{x_k})^{-1}$:
  \begin{equation}\label{eq:dxvserror}
  \begin{split}
    \|0_{x_k}-\mathbf{x}_*\|_{x_k} &\le \tilde C\|\Delta x_k\|_{x_k} \le \frac{\tilde C}{\tau_k}\|\delta x_k\|_{x_k}
    \le \tilde C\left(1+\frac{[\omega_{\Fp}]}{2\alpha_{\Ld''}}\|\delta x_k\|_{x_k}\right)\|\delta x_k\|_{x_k}.
    \end{split}
  \end{equation}
  
  Next, consider the acceptance test~\eqref{eq:decrease}. Since 
  $\mathbf{m}_{[\omega_{\Fp}]}(\delta x_k) < \mathbf{m}_{[\omega_{\Fp}]}(\Delta n_k)$, \eqref{eq:decrease} is
  certainly fulfilled with $\eta \ge 1$, if $\F(\delta x_k+\delta s_k) \le  \mathbf{m}_{[\omega_{\Fp}]}(\delta x_k)$. To establish such an estimate, we compute from Proposition~\ref{cubicBoundModel1}, Proposition~\ref{cubicBoundModel}, and \eqref{eq:dxvserror}:
  \begin{equation}\label{eq:nonmaratos}
  \begin{split}
   \Fp(\delta x_k+\delta s_k)- \q(\delta x_k) &\le \frac{\omega_{\Fp}}{6}\|\delta x_k\|_{x_k}^3+\omega_*\|0_{x_k}-\mathbf{x}_*\|_{x_k}M_\Phi\|\delta x_k\|_{x_k}^2\\
    &\le C\left(1+\frac{[\omega_{\Fp}]}{2\alpha_{\Ld''}}\|\delta x_k\|_{x_k}\right)\|\delta x_k\|_{x_k}^3.
    \end{split}
  \end{equation}
Since 
\begin{equation*}
\mathbf{m}_{[\omega_{\Fp}]}(\delta x)-\q(\delta x_k)=\frac{[\omega_{\Fp}]}{6}\|\delta x_k\|_{x_k}^3
\end{equation*}
we obtain $\Fp(\delta x_k+\delta s_k) \le \mathbf{m}_{[\omega_{\Fp}]}(\delta x_k)$, if
\[
  C\left(1+\frac{[\omega_{\Fp}]}{2\alpha_{\Ld''}}\|\delta x_k\|_{x_k}\right) \le \frac{[\omega_{\Fp}]}{6}
\]
For sufficiently small $\delta x_k$ this is true, if
\begin{equation}\label{eq:boundomega}
 [\omega_{\Fp}] \ge \frac{6C}{1-\frac{3C}{\alpha_{\Ld''}}\|\delta x_k\|_{x_k}}. 
\end{equation}
Thus, we conclude that close to a minimizer \eqref{eq:decrease} always holds with $\eta \ge 1 >\hat \eta$, if $[\omega_{\Fp}]$ is above the bound, given in \eqref{eq:boundomega}, which only depends on the problem and the chosen neighbourhood around $x_*$. Consequently, by our algorithmic mechanism, 
$[\omega_{\Fp}]$ cannot become unbounded. 
Hence, as $x_k \to x_{*}$, implies by \eqref{eq:esttau} that $\tau_k \to 1$ because $[\omega_{\Fp}]\|\delta x\|_{x_k}/\alpha_{\Ld''}\to 0$. Thus, we obtain local superlinear convergence of our algorithm. More accurately, by boundedness of $[\omega_{\Fp}]$ we obtain, using $\|\delta x_k\|_{x_k}\le \|\Delta x_k\|_{x_k}$ and \eqref{eq:esttau}:
\[
 \tau_k \ge \frac{1}{1+c\|\Delta x_k\|_{x_k}}\quad \Rightarrow\quad  1-\tau_k \le \tilde c\|\Delta x_k\|_{x_k}
\]
and hence
\[
 \|\Delta x_k-\delta x_k\|_{x_k}\le (1-\tau_k)\|\Delta x_k\|_{x_k}\le \tilde c\|\Delta  x_k\|_{x_k}^2.
\]
Since also $\|\delta s_k\|_{x_k}\le \omega_{\Cp}\|\Delta x_k\|_{x_k}^2/2$, we have
\[
 \|\Delta x_k-(\delta x_k+\delta s_k)\|_{x_k}\le \hat c\|\Delta x_k\|_{x_k}^2,
\]
so the error between full Newton step and modified step decreases quadratically. Thus, quadratic convergence of the full Newton method in Proposition~\ref{pro:localNewton} and Theorem~\ref{thm:localNewton} carries over to our globalized version. 
\end{proof}

\begin{remark}
Observe, how the result $|\Fp(\delta x+\delta s)-\q(\delta x)|=O(\|\delta x\|_x^3)$ enters the proof. If, instead of \eqref{eq:nonmaratos} only $|\Fp(\delta x+\delta s)- \q(\delta x)|\le C\|\delta x\|_{x_k}^2$ holds, then \eqref{eq:boundomega} has to be replaced by $[\omega_{\Fp}]\ge C/\|\delta x_k\|_{x_k}$, so we only can only expect boundedness of $[\omega_{\Fp}]\|\delta x_k\|_{x_k}$.
Then, however, \eqref{eq:esttau} does not imply $\tau_k \to 1$ anymore, and thus fast local convergence cannot be shown. In a similar way other globalization schemes are affected, as well. This predicted slow-down of local convergence has also been observed in computations, historically. This is called the Maratos effect, and led to the indicated algorithmic developments.
\end{remark}

\section{Application: an inextensible flexible rod}\label{flexible_rod}

In this section we consider the numerical simulation of flexible inextensible rods to illustrate our approach and to demonstrate its viability. In particular, we highlight some of the theoretically observed robustness properties with respect to the use of two different retractions. 

Flexible rods are present in many real life problems, for 
example engineers are interested in the static and dynamic behaviour of flexible pipelines used in off-shore oil production under the 
 effects of streams, waves, and obstacles; or protein structure comparison \cite{liu2011mathematical} where elastic 
elastic curves are used to represent and compare protein structures. Here we consider the  problem where the stable equilibrium position of an inextensible transversely
isotropic elastic rod under dead load is sought. First we provide the formulation and the mathematical analysis of the problem, followed by the discretization and the derivatives of the 
mappings over the manifold of kinematically admissible configurations.

\subsection{Problem formulation}

Here we provide the energetic formulation of the problem of finding the stable equilibrium position of an inextensible, transversely isotropic
elastic rod under dead loading. For more details on the derivation of the model see \cite{glowinski1989augmented}. We start with the energy minimization problem
\begin{align}\label{min_prob}
\min_{y\in M} J(y)
\end{align}
where the energy $J$ and the manifold $M$ which describes the inextensibility condition are given by:
\begin{align*}
J(y)&=\frac{1}{2}\int_{0}^{1} \sigma(s) \left\langle y''(s),y''(s)\right\rangle ds -\int_0^1 \langle g(s),y(s) \rangle\, ds, \\
M&=\{y\mid y\in H^2([0,1];\mathbb{R}^3),\vert y'(s)\vert =1  \, \mbox{on} \, [0,1] \}. 
\end{align*}
Boundary conditions are given by 
\begin{align}\label{B.C}
\begin{split}
y(0)&=y_a \in \mathbb{R}^3, \,\,\, y'(0)=y'_a\in \mathbb{S}^2 \\
y(1)&=y_b \in \mathbb{R}^3, \,\,\, y'(1)=y'_b\in \mathbb{S}^2. 
\end{split}
\end{align}  
The quantity $\overline \sigma > \sigma(s)\ge \underline \sigma>0$ is the flexural stiffness of the rod, $g$ is the lineic density of external loads, and $y'$, $y''$ are the derivatives of $y$ with respect to $s\in[0,1]$. 
Denote by $\mathbb{S}^2$ the 
unit sphere
\[
 \mathbb{S}^2=\{v\in \mathbb{R}^3:\,|v|=1 \}.
\]
Introducing $v(s):=y'(s)$ we reformulate \eqref{min_prob} as a mixed problem:
\begin{align*}
\min f(y,v)&:=\frac{1}{2}\int_{0}^{1} \sigma \left\langle v',v'\right\rangle ds -\int_0^1 \langle g,y\rangle \, ds\\
\mbox{ s.t. } c(y,v) &:= y'-v =0
\end{align*}
defined on
\begin{align}\label{boundary_conditions}
\begin{split}
Y&=\{y\in H^2([0,1];\mathbb{R}^3)\,\, : \,\, y(0)=y_a,\,\, y(1)=y_b  \} \\
V&=\{v\in H^1([0,1];\mathbb{S}^2)\,\, :\,\, v(0)=v_a,\,\, v(1)=v_b  \}.
\end{split}
\end{align}
In short we get a constrained minimization problem of the form:   
\begin{align}\label{eq:rodasminimization}
 \min_{(y,v)\in (Y\times V)} f(y,v) \,\,\, \mbox{ s.t }\,\,\, c(y,v)=0.
 \end{align}
In the following we discuss application of our algorithm to a discretized version of 
\eqref{eq:rodasminimization}. The advantage of this formulation is that $V$ can now be discretized as a product manifold. 

Concerning the study of existence and the uniqueness of the solutions of the problem $(\ref{min_prob})$ we refer the reader
to the books \cite{glowinski1989augmented,antman1978global} for a detailed and complete mathematical analysis of these kind of problems. In the following we assume that $\sigma\in L^{\infty}([0,1])$ is non-negative.
Concerning to the existence properties of the problem 
$(\ref{min_prob})$ we have the following theorem.
\begin{theo}
Suppose that $\vert y_a-y_b \vert < 1$, \eqref{B.C} holds, and that the linear functional $y \to \int_{0}^{1} \langle g, y\rangle ds$ is continuous on $H^2([0,1];\mathbb{R}^3)$. Then the problem \eqref{min_prob} has at least one solution.
\end{theo}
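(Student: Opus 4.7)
The plan is to apply the direct method of the calculus of variations. First I would verify that the admissible set $M$ (intersected with the Dirichlet data in \eqref{B.C}) is non-empty. The condition $|y_a-y_b|<1$ is essential here: since any $y\in M$ satisfies $y_b-y_a=\int_0^1 y'(s)\,ds$ with $|y'|=1$ a.e., one has $|y_b-y_a|\le 1$, with equality only for the straight segment. Under strict inequality one can construct an explicit admissible curve, e.g.\ a smooth planar arc connecting $y_a$ to $y_b$ with prescribed unit tangents $y_a',y_b'$, by first exhibiting a $C^\infty$ map $[0,1]\to\mathbb S^2$ with the right boundary values and then rescaling so the chord condition is met; this is where a little care is required because of the tangent boundary data, but the construction is standard once $|y_a-y_b|<1$.

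Second, I would establish coercivity. For $y\in M$ one has $\|y'\|_{L^2}^2=1$; combined with the fixed Dirichlet datum $y(0)=y_a$, Poincar\'e's inequality yields a bound on $\|y\|_{H^1}$. Together with $\sigma(s)\ge\underline\sigma>0$ this gives
\begin{equation*}
J(y)\ge \tfrac{\underline\sigma}{2}\|y''\|_{L^2}^2-\|g\|_{H^2([0,1];\mathbb R^3)^*}\|y\|_{H^2},
\end{equation*}
so $J$ is bounded below on $M$ and any minimizing sequence $(y_n)$ is bounded in $H^2([0,1];\mathbb R^3)$.

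Third, extract a subsequence, still denoted $(y_n)$, converging weakly in $H^2$ to some limit $y$. The compact embedding $H^2([0,1];\mathbb R^3)\hookrightarrow C^1([0,1];\mathbb R^3)$ gives $y_n\to y$ and $y_n'\to y'$ uniformly. This passes the boundary conditions \eqref{B.C} to the limit and, pointwise, shows $|y'(s)|=\lim|y_n'(s)|=1$ for every $s$, so $y\in M$; this is the main place where the manifold constraint interacts with the functional-analytic setup, and it is the step I would watch most carefully.

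Finally, the quadratic form $y\mapsto\tfrac12\int_0^1\sigma\langle y'',y''\rangle\,ds$ is convex and continuous on $H^2$, hence weakly lower semicontinuous, while $y\mapsto\int_0^1\langle g,y\rangle\,ds$ is weakly continuous by assumption. Therefore
\begin{equation*}
J(y)\le\liminf_{n\to\infty}J(y_n)=\inf_{M}J,
\end{equation*}
so $y$ is a minimizer, completing the proof.
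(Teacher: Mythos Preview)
Your argument by the direct method is correct and complete as a sketch. The paper itself does not give a proof at all: it simply cites \cite{glowinski1989augmented}, and the argument there is precisely the one you outline (coercivity from $|y'|\equiv 1$ and $\sigma\ge\underline\sigma$, weak $H^2$ compactness, the compact embedding $H^2\hookrightarrow C^1$ to pass the constraint $|y'|=1$ and the boundary data to the limit, and weak lower semicontinuity of the convex quadratic part). So there is no genuine methodological difference to compare.

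The one place where you hedge is the right one: non-emptiness of the admissible set amounts to exhibiting $v\in H^1([0,1];\mathbb S^2)$ with $v(0)=y_a'$, $v(1)=y_b'$ and $\int_0^1 v\,ds=y_b-y_a$, and the hypothesis $|y_a-y_b|<1$ is exactly what guarantees this can be done for \emph{arbitrary} prescribed unit tangents $y_a',y_b'$. Your remark that this step needs a small explicit construction (and that it is standard) is accurate; in the cited reference this is done carefully.
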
 
\begin{proof}
See \cite{glowinski1989augmented}.
\end{proof}
%
%

\subsection{Finite difference approximation}
For discretization, we use a very simple finite difference approach. We discretize the interval $[0,1]$ uniformly 
$s_i=ih,\; i=0,\dots, n$
where $h=1/n$.
 Evaluating at each nodal point we denote $y(s_i)=y_i \in \mathbb{R}^3$ and $v(s_i)=v_i \in \mathbb{S}^2$ for $i=0\dots n$ with boundary conditions:
\begin{align*}
y(0)=y_0&=y_{a}\in \mathbb{R}^3, \qquad  y(1)=y_n=y_{b}\in \mathbb{R}^3, \\
v(0)=v_0&=v_{a} \in \mathbb{S}^2, \qquad v(1)=v_n=v_{b}\in \mathbb{S}^2.
\end{align*}
Employing forward finite difference discretization and a Riemann sum for the integrals yields the following approximation of the energy functional
\begin{align}\label{Functional}
f(y,v)=\frac{1}{2}\sum_{i=0}^{n-1} h\left\langle \frac{1}{h}(v_{i+1}-v_i) ,\frac{1}{h}(v_{i+1}-v_i) \right\rangle -\sum_{i=1}^{n} h \left\langle g_i,y_i \right\rangle.
\end{align}

Concerning the constraint $c(y,v)$, performing forward finite differences to the equation $y'-v=0$, the discretized constraint mapping takes the form
\begin{align}\label{constraint}
\frac{y_{i+1}-y_i}{h}-v_i=0, \quad i=0,...,n-1.
\end{align}
We observe that the codomain of our constraint mapping is a linear space, which eliminates the need for a retraction in the codomain. 

In the formulation above of the discrete inextensible rod and with $y_0,v_0,y_n,v_n$ fixed, the manifold we choose as our manifold $X$ the product manifold
\[
  X=(\mathbb{R}^3 \times \mathbb{S}^2)^{n-1},
\] 
where $n$ is the number of grid vertices. The elements of the manifold $X$ are 
denoted by the cartesian product
\begin{align*}
x=(y,v)=\prod_{i=1}^{n-1}(y_i,v_i), \hspace{0.5cm} y_i\in \mathbb{R}^3, \, v_i\in \mathbb{S}^2 \subset \R^3, 
\end{align*}
so each $v_i \in \mathbb S^2$ is represented by an element of $\R^3$ with unit norm. 

The tangent space at $x=(y,v)\in X$ is given by the following direct sum of vector spaces
\begin{align*}
T_{x}X=\bigoplus_{i=1}^{n-1}\left(T_{y_i}\mathbb{R}^3\oplus T_{v_i}\mathbb{S}^2 \right).
\end{align*}
The update, using the retraction map $R_{x}:T_{x}X \to X$, is done in a component-wise way by:
\begin{align*}
(y_{+},v_{+})=R_{x}(\delta y,\delta v)=\prod_{i=1}^{n-1}(y_i+\delta y_i, R^{\mathbb S^2}_{v_i}(\delta v_i) ).
\end{align*} 
Thus, due to the product structure we only have to provide and implement a retraction $R^{\mathbb S^2}_{v_i} : T_{v_i}\mathbb{S}^2\to \mathbb S^2$ to obtain a retraction on the product manifold $X$. 
\subsection{Retractions and their implementation via local parametrizations}\label{sec:localpara}

As presented above, retractions are defined as mappings $R_x : T_x X\to X$ between an abstract vector space $T_xX$ and a manifold $X$. For their implementation on a computer, we have to choose a basis of $T_xX$ and define a representation of $R_x$ with respect to that basis.  
This yields the concept of local parametrizations as described in \cite{absil2009optimization}. 

Let $\{\xi_1,\dots \xi_d\}$ be a basis of $T_x X$ so that each $v\in T_x X$ can be written uniquely as a linear combination of basis vectors:
\[
 v = \sum_{i=1}^d u_i\xi_i, \quad \mbox{ with } u \in \R^d.
\]
which yields a linear isomorphism $W_x:\R^d \to T_xX$, mapping $u$ to $v$. 

Then a local parametrization, based on a given retraction $R_x:T_xX\to X$ is defined as:
\begin{align*}
 \mu_x:\mathbb{R}^d&\to X\\
        u &\mapsto \mu_x(u) := R_x(W_x u)=R_x\left(\sum_{i=1}^d u_i\xi_i\right).
\end{align*}
This implies that $\mu_x(0)=x$ and that $\mu_x$ is a local diffeomorphism around $0\in \mathbb{R}^d$ with derivative $W_x$ at $0$. In fact, it is $\mu_x$ that can be implemented on a computer. The corresponding subroutine takes an element $u\in \R^d$ as an input and yields a suitable representation of $x=\mu_x(u) \in X$.  

For our case we construct local parametrizations around each node $v_i$ on each sphere, induced by retractions $R_{v_i}^{\mathbb{S}^2}$, this is, 
we look for local diffeomorphisms around $v_i\in \mathbb{S}^2$:
\begin{align*}
\mu_{v_i}:\mathbb{R}^2 &\to \mathbb{S}^2 \subset \R^3\\
        u &\to \mu_{v_i}(u)
 \end{align*}
 such that $\mu_{v_i}(0)=v_i$  and  $\mu_{v_i}(u)\in \mathbb{S}^2$. Elements of $\mathbb{S}^2$ are represented as vectors $v\in \R^3$ with $\|v\|_{\R^3}=1$ via the standard embedding $\mathbb{S}^2\subset \R^3$. Due to this embedding, we may view $\mu_{v_i}$ as a mapping $\R^2\to \R^3$ and compute ordinary first and second derivatives.  

\paragraph{Two alternative retractions.} 
 To study the effect of consistency of retractions numerically, we will introduce two different retractions with suitable local parametrizations. The first retraction is well known and quite straightforward. The second one uses a more sophisticated computation. We introduce it in order to have two different retractions to our disposal 
and to demonstrate that these two can be used interchangingly, as predicted by our theoretic results. 

\paragraph{Projection to the sphere.}
In the following we use the representation: 
\[
 T_{v}\mathbb{S}^2 = \{ w\in \R^3: w \perp v \}. 
\]
For $v\in \mathbb{S}^2$, let be $u\in \mathbb{R}^2$, $u=(u_1,u_2)$ and $\{\zeta_1,\zeta_2\}\in T_{v}\mathbb{S}^2$ be an orthogonal basis for the tangent space of $\mathbb{S}^2$ at every $v$.  We  define the parametrization around $v$ by:
 \begin{align*}
\mu_{v,p}(u) =\frac{v+u_1\zeta_1+u_2\zeta_2}{\Vert v+u_1\zeta_1+u_2\zeta_2 \Vert}
 \end{align*}
with first and second derivatives:
\begin{align*}
  \mu'_{v,p}(0)\delta u&=\delta u_1\zeta_1 + \delta u_2 \zeta_2\\
  \mu''_{v,p}(0)(\delta u,\delta w) &= -(\delta u_1\delta w_1+\delta u_2\delta w_2)v.
 \end{align*}
This parametrization implements the retraction:
 \begin{align*}
 R_{v,p}(\delta v)=\frac{v+\delta v}{\|v +\delta v \|}
 \end{align*}
and they satisfy $R_{v,p}(0)=\mu_{v,p}(0)=v$, $T_{0_v}R_{v,p} =id_{T_{v}\mathbb{S}^2}$.
Details can be found in \cite{absil2009optimization}.	
\paragraph{Matrix exponential.}
 The following alternative retraction uses a characterisation of $T_{v}\mathbb{S}^2$ via the space of skew-symmetric matrices $\mathfrak{so}(3)=\{H\in \mathbb{R}^{3\times 3}\vert H=-H^{T} \}$:
 \[
  T_{v}\mathbb{S}^2 = \{ H{v} : H \in \mathfrak{so}(3) \}.
 \]
 This follows from $\langle H{v},{v}\rangle=-\langle {v},H{v}\rangle=0$ by the fact that $Hv$ can be written as $w\times  v$, which is non-zero if $0\neq w \perp v$. 
Using the matrix exponential map, and setting $\delta v=H{v}$ we can define the following retraction: 
 \[
   R_{{v},e}(\delta v)=\exp(H){v}.
 \]
 where $\exp: \mathfrak{so}(3) \to SO(3)$
is the matrix exponential mapping with  
\[
SO(3)=\{Q\in \mathbb{R}^{3\times 3}\vert QQ^{T}=Q^{T}Q=id_{\R^3},\, \det(Q)=1 \},
\]
the group of rotations. This retraction is well defined: if $H_0v=0$, then $\exp(H_0)v=v$ as can be seen by the series expansion of the matrix exponential, and thus $\exp(H+H_0){v}=\exp(H)v$. 

For any given ${v} \in \mathbb{S}^2$ we consider a basis $b_2=\{C_1{v},C_2{v}\}$ for the tangent space $T_{v}\mathbb{S}^2$,
where $C_j \in \mathfrak{so}(3)$ are chosen in a way that $C_j {v}\neq 0$ for $j=1,2$. Now we define the map  for $u=(u_1,u_2)$:
 \begin{align*}
 {\mu}_{v,e}(u)=\exp \left(u_1 C_1 + u_2 C_2 \right)v
 \end{align*}
Since $\exp \left(u_1 C_1 + u_2 C_2 \right)\in SO(3)$ and $\exp(0)=I$ we obtain
${\mu}_{v,e}(0)=v$ and $\|{\mu}_{v,e}(u)\|_{\R^3}=1$, i.e., ${\mu}_{v,e}(u)\in \mathbb{S}^2$. 
Then first and second derivatives are:
\begin{align*}
  \mu_{v,e}'(0)\delta u&=(\delta u_1C_1+\delta u_2C_2)v\\
\mu''_{v,e}(0)(\delta u,\delta w)&=(\delta u_1 C_1 +\delta u_2 C_2)(\delta w_1 C_1 +\delta w_2 C_2)v .
\end{align*}
So the derivative of the retraction reads $T_{0_v}R_{v,e}=id_{T_{v}\mathbb{S}^2}$.


\subsection{The pullback of the problem}
We now pull back the energy functional $f$ and the constraint mapping $c$ using a local parametrization at each $v_i$ through $\mu_{v_i}$, which denotes any of the two parametrizations, presented above. From ($\ref{Functional}$) the pullback of the energy functional
takes the form:
\begin{align}\label{discretized_functional_charts}
\F(y,u)=\frac{\sigma}{2}\sum_{i=0}^{n-1} h\left\langle \frac{1}{h}(\mu_{v_{i+1}}(u_{i+1})- \mu_{v_i}(u_i)) ,\frac{1}{h}(\mu_{v_{i+1}}(u_{i+1})- \mu_{v_i}(u_i)) \right\rangle -\sum_{i=0}^{n-1} h \left\langle g_i,y_i \right\rangle.
\end{align}
(where $\mu_{v_{0}}(u_{0})$ and $\mu_{v_{n}}(u_{n})$ have to be replaced by the fixed $v_0$ and $v_n$ respectively) and componentwise:
\begin{align}\label{discretized_constrained_charts}
\C_i(y,u)=\frac{y_{i+1}-y_i}{h}-\mu_{v_i}(u_i)=0 \quad i=0\dots n-1.
\end{align}
We observe:
\[
 \F : (\R^3\times \R^2)^{n-1}\to \R, \mbox{ and } \C : (\R^3\times \R^2)^{n-1} \to (\R^3)^{n}.
\]
\paragraph{Derivatives of the pullbacks.}
Now we provide the derivatives of the involved pullbacks.  This is done through the composition with the local parametrizations of the sphere. The derivatives are computed centered at the zero of each 
tangent space parametrization of each sphere $\mathbb{S}^2$. For the composite step method, we need to compute first and second derivatives of both, energy and constraint mappings in charts.
Consider the discretized energy functional in \eqref{discretized_functional_charts}. Its first and second derivatives are given by:
\begin{align*}
\F\,'(y,u)(\delta y,\delta u)=\frac{\partial\F(y,u)}{\partial y}\delta y+\frac{\partial\F(y,u)}{\partial u}\delta u
\end{align*}
and
\begin{align*}
\F\,''(y,u)(\delta y,\delta u)^2=\left[\begin{array}{cc}
\delta y \\\delta u
\end{array}  \right]^T\left[\begin{array}{cc}
0 & 0  \\
0 & \frac{\partial^2\F(y,u)}{\partial u^2}
\end{array}\right]\left[\begin{array}{c}
\delta y \\
\delta u
\end{array}\right]
\end{align*} 
where 
\begin{align*}
\frac{\partial \F(y,u)}{\partial y_i}\delta y=-h\langle f_i,\delta y\rangle,
\end{align*} 
and, at $u=0$, taking into account that $\mu_{v_i}(0)=v_i$:
\begin{align*}
\frac{\partial \F(y,0)}{\partial u_i}\delta u
&=-\frac{1}{h}\left\langle \mu_{v_i}'(0)\delta u, v_{i+1}-2v_{i}+v_{i-1} \right\rangle 
\\
\frac{\partial^2 \F(y,0)}{\partial u_i^2}(\delta u,\delta w)&=-\frac{1}{h}\left\langle \mu''_{v_i}(0)(\delta u,\delta w), v_{i+1}-2v_{i}+v_{i-1} \right\rangle 
+\frac{2}{h}\left\langle \mu_{v_i}'(0)\delta u, \mu_{v_i}'(0) \delta w\right\rangle \\
\frac{\partial^2 \F(y,0)}{\partial u_i \partial u_{i-1} }(\delta u,\delta w)&=- \frac{1}{h}\left\langle\mu_{v_i}'(0)\delta u, \mu_{v_{i-1}}'(0)\delta w \right\rangle. 
\end{align*} 
The discretized constraint mapping  $\C$ in \eqref{discretized_constrained_charts} has the following derivatives:
\begin{align*}
\C_i'(y,0)(\delta y,\delta u)&=-\frac{1}{h}\delta y_i-\mu_{v_i}'(0)\delta u_i \\
\C_i''(y,0)(\delta y,\delta u)^2&=\left[\begin{array}{cc}
\delta y_i & \delta u_i
\end{array}  \right]\left[\begin{array}{cc}
0 & 0\\
0 & -\mu_{v_i}''(0)
\end{array} \right]\left[\begin{array}{c}
\delta y_i \\
\delta u_i
\end{array} \right].
\end{align*}

\subsection{Numerical Results}

We provide numerical simulations in order to illustrate the performance of the composite step method, described above. The optimization algorithm was implemented in Spacy \footnote{https://spacy-dev.github.io/Spacy/} which is a C++ library designed for optimization algorithms in a general setting and particularly suited for variational problems.

\begin{figure}[h!]%
    \centering
       \centering
    \includegraphics[width = 0.4\textwidth]{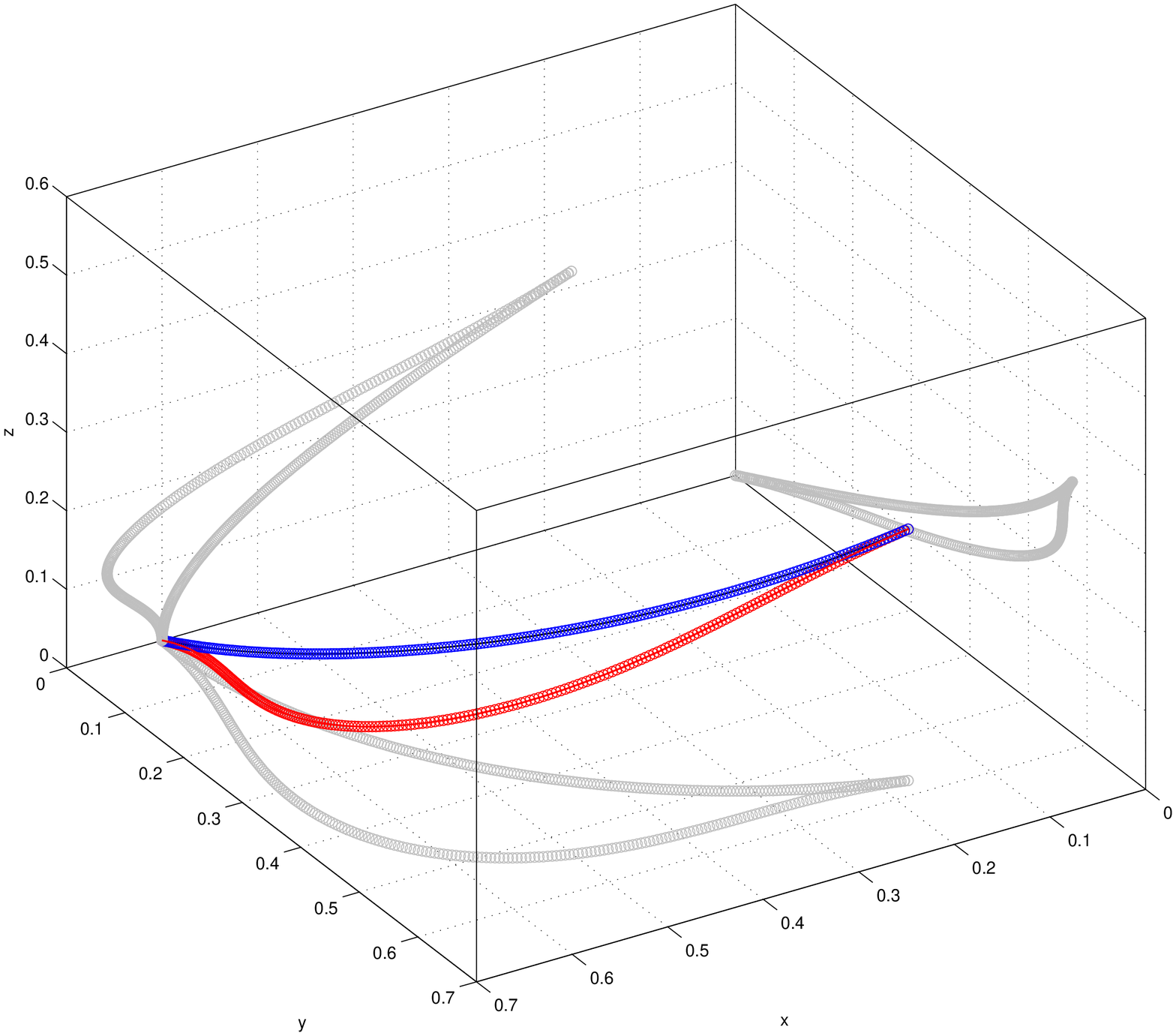}
    \hspace{1cm}
    \includegraphics[width = 0.4\textwidth]{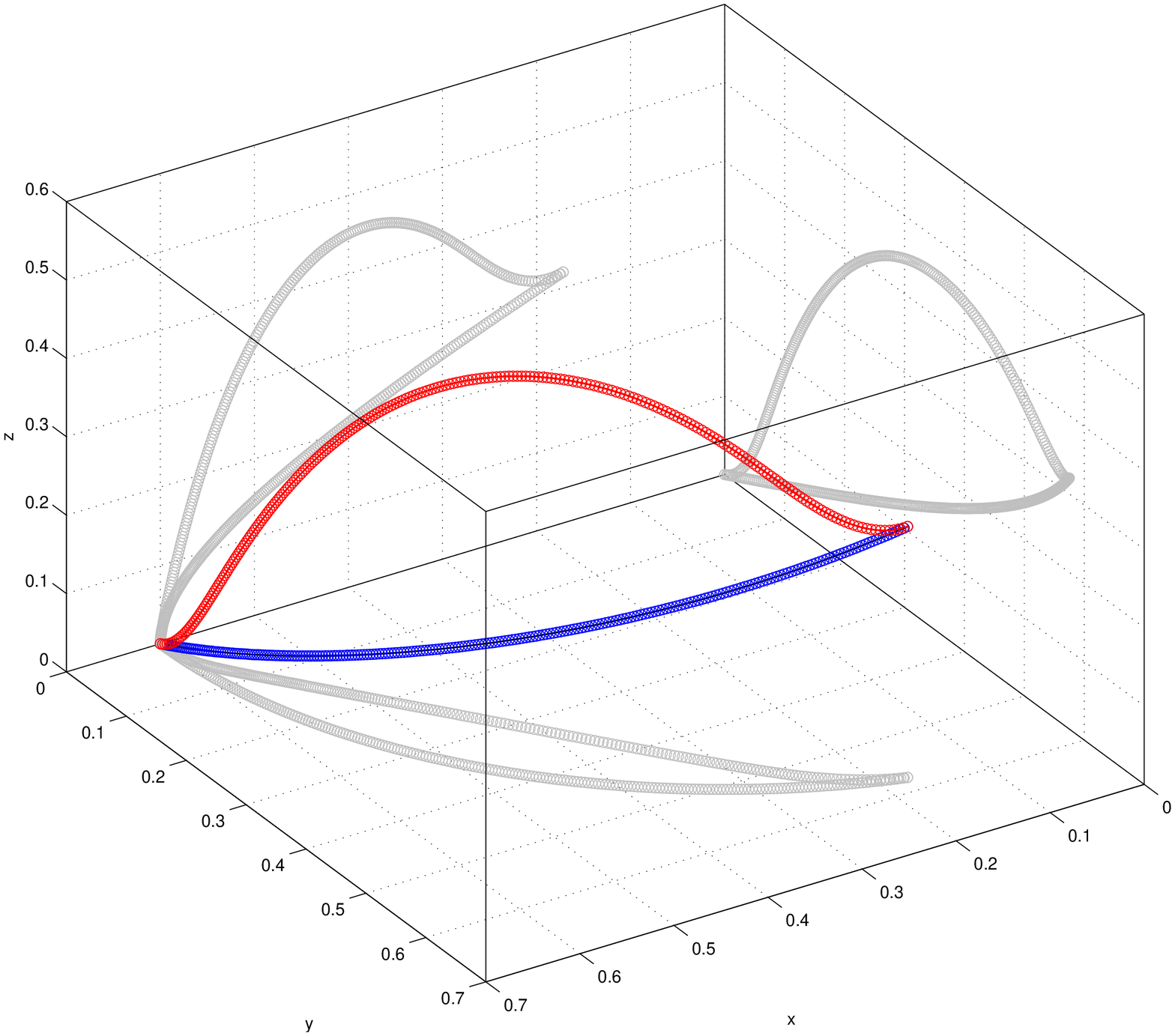}
    \caption{ Solutions of the rod problem, blue initial configuration, red computed solution, grey shades: projection to the coordinate planes. Left: rod without external force. Right: rod with external force $1000\,e_3$,
    (pointing upwards)}%
    \label{fig:example1}%
\end{figure}

We remind the problem setting:
\begin{align*}
\min_{(y,v)\in Y\times V} \frac{1}{2} \int_0^1 \sigma \left\langle v',v' \right\rangle \, ds-\int_0^1  \left\langle g, y \right\rangle  \, ds \hspace{0.5cm}s.t.\hspace{0.5cm}y'-v=0
\end{align*} 
where $\sigma>0$ is the stiffness of the rod, and $g$ describe the external loads. As initial configuration we consider a rod $(y_0,v_0)$ which assumes the form:
\begin{align*}
y_0(s)=[r\cos(\omega s),r\sin(\omega s),a^2\omega s], \qquad v_0(s)=y_0'(s)
\end{align*} 
with $s\in [0,1]$ $r>0$, $a>0$ and $\omega=\frac{1}{\sqrt{r^2+a^2}}$. The rod is clamped at $y_a=y_0(0)=[r,\, 0,\, 0]$ $y_b=y_0(1)=[r \cos(\omega), \, r\sin(\omega),\, a^2\omega]$ and $v_a=y_0'(0)/|y_0'(0)|$, $v_b=y_0'(1)/|y_0'(1)|$.  We perform 
numerical simulations for  $r=0.6$, $a=0.5$. The stiffness of the rod will be constant and given by $\sigma=1.0$. A minimization without external forces, using the exponential retraction $\mu_2$ and $n=240$ nodes converges in 7 iterations. The corresponding result can be seen in Figure~\ref{fig:example1}, left.

\begin{table}[h]
\begin{center}
 \begin{tabular}{|p{2.0cm}|p{2.0cm} p{2.0cm}| }
 
 \hline
 $\quad \Rd$ \textbackslash$\; \Rp$   & $R_{v,p}$  & $R_{v,e}$  \\
 \hline
   $R_{v,p}$   & 9  &  9   \\
   
   $R_{v,e}$  & 10  &  10 \\
  \hline
\end{tabular}
\end{center}

\vspace{0.1cm}
\caption{
Number of composite step iterations for different combinations of retraction. 
The pullback is done with the parametrization in the column and the update with the parametrization 
in the row. Here $R_{v,p}(\delta v)=\frac{v+\delta v}{\|v+\delta v\|}$ and $R_{v,e}(\delta v)=\exp(H)v$ with $\delta v=Hv$.  
}\label{tab:iter1}
\end{table}
\begin{table}[h]
\begin{center}
\begin{tabular}{|p{1.8cm}|p{1.8cm}| }
 \hline
 $n$ & \#iterations  \\
 \hline
    120  & 9   \\
    240  & 12   \\
    480  & 8   \\
    960  & 10   \\
   \hline
\end{tabular}
\end{center}
\caption{
Number of composite step iterations for the problem with different number of nodes $n$. The pullback and updates are done with the parametrization  $\mu_{v,e}(u)=\exp(C_1u_1+C_2u_2)v$. 
}\label{tab:iter2}
\end{table}

Next, we apply an external force $g=1000 e_3$ to the rod, where $e_3=[0,0,1]^T$ (cf. Figure~\ref{fig:example1}, right). 
We consider the two discussed retractions and combinations of them and observe  similar numbers of iterations in all cases (cf. Table~\ref{tab:iter1}). Also
the number of iterations is largely independent of the size of the grid (cf. Table~\ref{tab:iter2}).

\begin{figure}[h!]
   \centering
    \includegraphics[width = 0.4\textwidth]{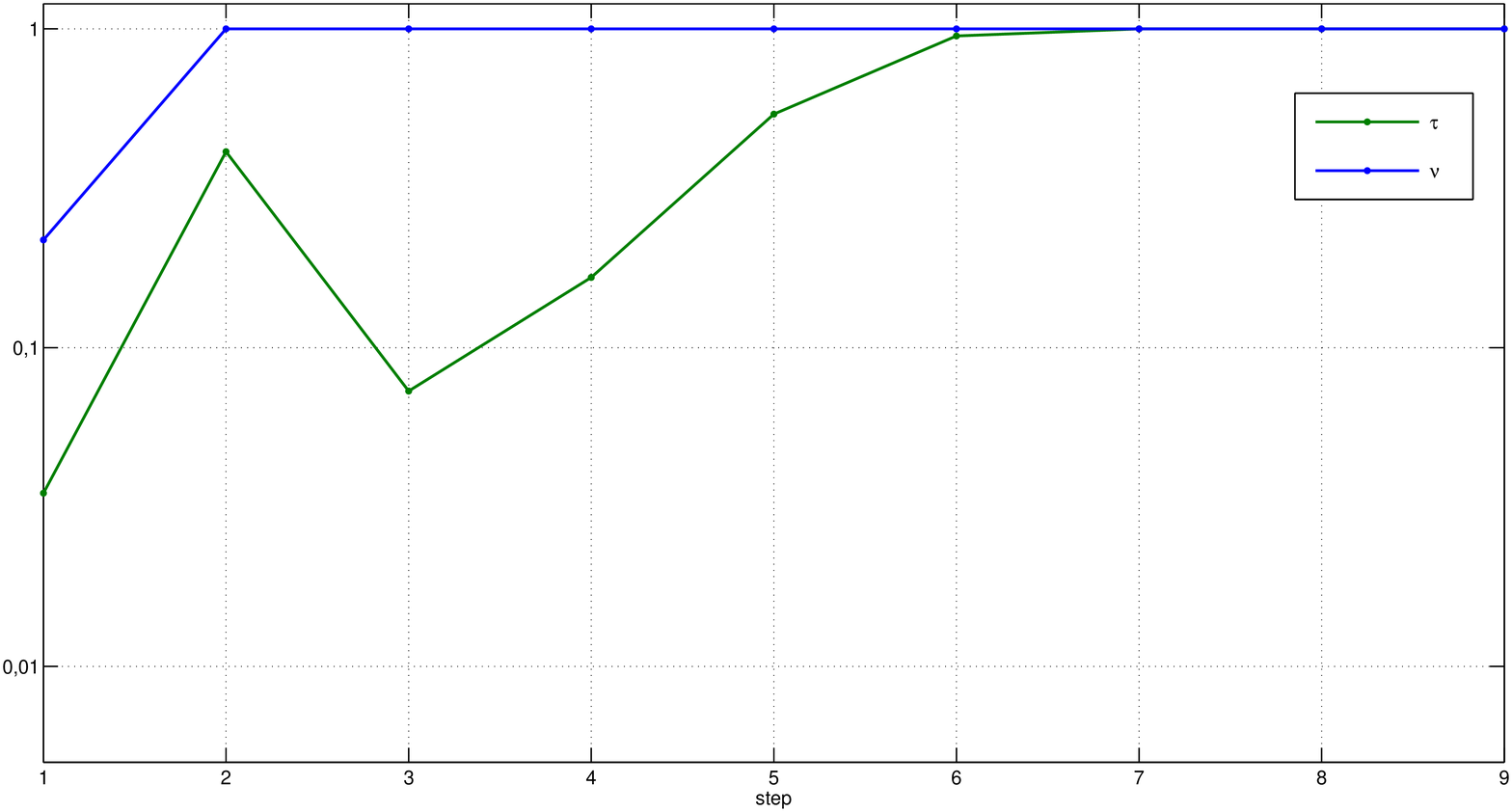}
    \hspace{1cm}
    \includegraphics[width = 0.4\textwidth]{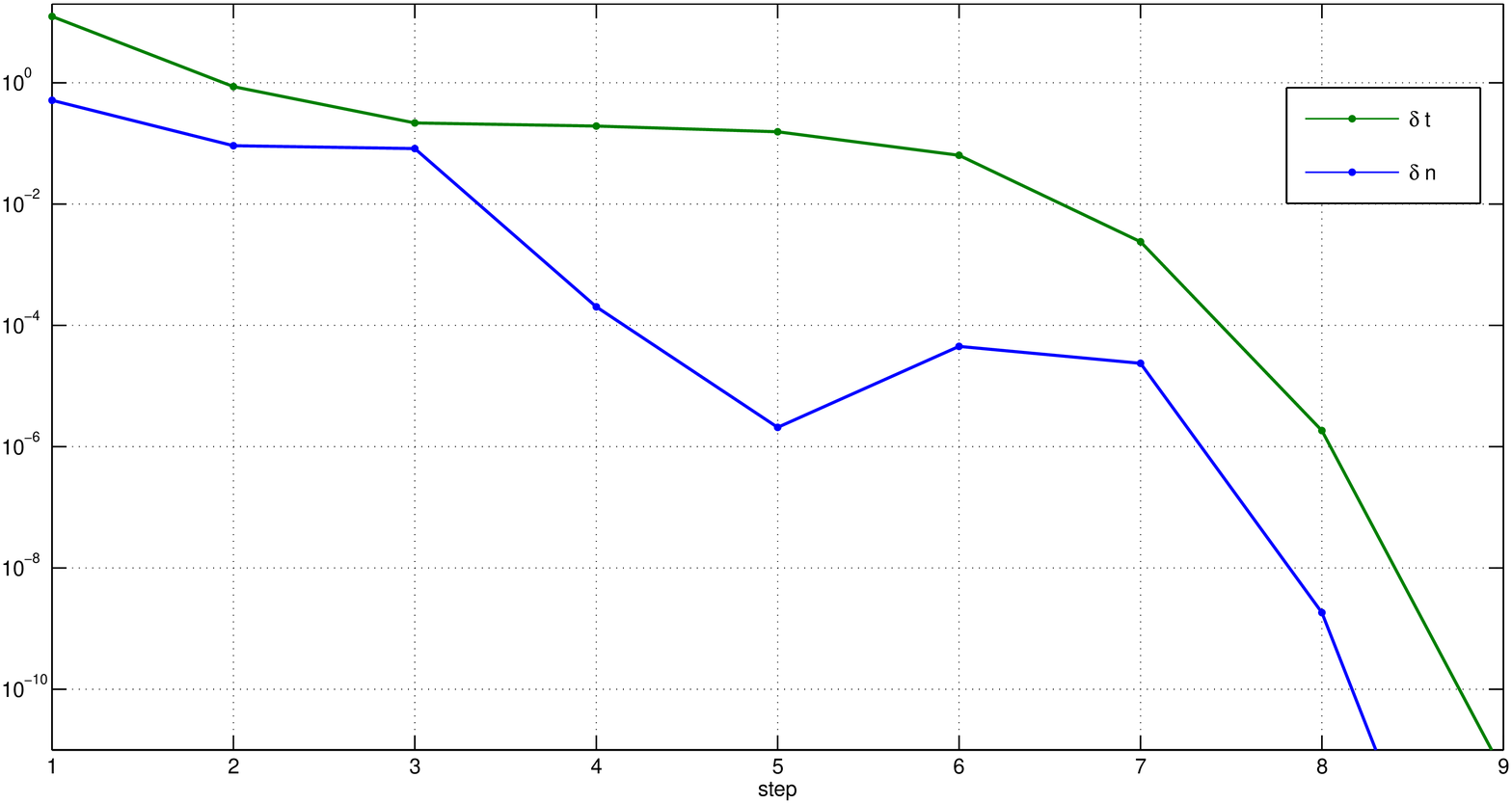}
  \caption{Iteration history: left: damping factors for normal and tangent steps, right: lengths of steps.
  }
  \label{fig:iteration}
\end{figure}

In Figure~\ref{fig:iteration} we take a closer look at the iteration history. We observe that after the globalization phase the damping factors are $1$ eventually and that the step sizes $\delta t$ and $\delta n$ become very small, close to the solution, indicating local superlinear convergence.

\section{Conclusion}

We have worked out in detail, how SQP methods can be extended to nonlinear manifolds by applying a two-step procedure: first the problem is pulled back to tangent spaces, using retractions and stratifications, second a linear-quadratic model is derived, whose minimization yields the required SQP steps. For this class of methods we derived results on fast local convergence, based on an affine covariant analysis. As a second theoretical issue, we studied the influence of retractions and stratifications on transition to fast local convergence of globalization methods. Here we could show for a specific globalization method that the Maratos effect can be avoided, if a modified quadratic model is used. 
Finally, we applied the described algorithm to a simple model problem from variational analysis: an inextensible elastic rod. The algorithm performed as expected and converged locally superlinearly.

Clearly, this work only scratches at the surface of the general topic of constrained optimization on manifolds and SQP methods in this setting. Given the variety of SQP algorithms that have been proposed on vector spaces, many alternatives to and extensions of the proposed method are conceivable. In particular, the extension of SQP methods to inequality constrained problems, for example by active-set or interior point methods, is an important open topic. Similarly, as already indicated in the introduction, there are plenty of applications to explore, which can be modelled as constrained optimization problems on manifolds. Also in this direction, a lot of work can be done in designing robust and efficient algorithms for their solution. In particular, variational problems in continuum mechanics call for the combination of optimization on manifolds and techniques of large scale numerical computations. 

\paragraph{Acknowledgement.}
This work was supported by the DFG grant SCHI 1379/3-1 ``Optimierung auf Mannigfaltigkeiten f\"ur die numerische L\"osung von gleichungsbesch\"ankten Variationsproblemen''

\bibliographystyle{alpha}
\bibliography{mybib}

\newcommand{\etalchar}[1]{$^{#1}$}
\begin{thebibliography}{KVBP{\etalchar{+}}14}

\bibitem[AKT12]{alouges2012convergent}
Fran{\c{c}}ois Alouges, Evaggelos Kritsikis, and Jean-Christophe Toussaint.
\newblock A convergent finite element approximation for
  {L}andau--{L}ifschitz--{G}ilbert equation.
\newblock {\em Physica B: Condensed Matter}, 407(9):1345--1349, 2012.

\bibitem[Alo97]{alouges1997new}
Fran{\c{c}}ois Alouges.
\newblock A new algorithm for computing liquid crystal stable configurations:
  the harmonic mapping case.
\newblock {\em SIAM journal on numerical analysis}, 34(5):1708--1726, 1997.

\bibitem[AMS08]{absil2009optimization}
Pierre-Antoine Absil, Robert Mahony, and Rodolphe Sepulchre.
\newblock {\em Optimization algorithms on matrix manifolds}.
\newblock Princeton University Press, 2008.

\bibitem[AR78]{antman1978global}
Stuart~S Antman and Gerald Rosenfeld.
\newblock Global behavior of buckled states of nonlinearly elastic rods.
\newblock {\em Siam Review}, 20(3):513--566, 1978.

\bibitem[Bal02]{ball2002some}
John~M Ball.
\newblock Some open problems in elasticity.
\newblock In {\em Geometry, mechanics, and dynamics}, pages 3--59. Springer,
  2002.

\bibitem[BEK18]{Brossette2018}
S.~{Brossette}, A.~{Escande}, and A.~{Kheddar}.
\newblock Multicontact postures computation on manifolds.
\newblock {\em IEEE Transactions on Robotics}, 34(5):1252--1265, 2018.

\bibitem[BH19]{HerzogBergmann2019}
Ronny Bergmann and Roland Herzog.
\newblock Intrinsic formulation of {KKT} conditions and constraint
  qualifications on smooth manifolds.
\newblock {\em SIAM J. Optim.}, 29(4):2423--2444, 2019.

\bibitem[BHM11]{bauer2010sobolev}
Martin Bauer, Philipp Harms, and Peter~W Michor.
\newblock Sobolev metrics on shape space of surfaces.
\newblock {\em Journal of Geometric Mechanics}, 3(1941 4889 2011 4 389):389,
  2011.

\bibitem[BP07]{bartels2007constraint}
S{\"o}ren Bartels and Andreas Prohl.
\newblock Constraint preserving implicit finite element discretization of
  harmonic map flow into spheres.
\newblock {\em Mathematics of Computation}, 76(260):1847--1859, 2007.

\bibitem[BS89]{badur1989influence}
J~Badur and Helmut Stumpf.
\newblock {\em On the influence of E. and F. Cosserat on modern continuum
  mechanics and field theory}.
\newblock Ruhr-Universit{\"a}t Bochum, Institut f{\"u}r Mechanik, 1989.

\bibitem[CGT00]{conn2000trust}
Andrew~R. Conn, Nicholas~I.M. Gould, and Philippe~L. Toint.
\newblock {\em Trust region methods}, volume~1.
\newblock Siam, 2000.

\bibitem[CGT11]{Cartis2011}
C.~Cartis, N.~Gould, and P.L. Toint.
\newblock Adaptive cubic regularisation methods for unconstrained optimization.
  {P}art {I}: motivation, convergence and numerical results.
\newblock {\em Math, Prog.}, 127(2):245--495, 2011.

\bibitem[Deu11]{deuflhard2011newton}
Peter Deuflhard.
\newblock {\em {N}ewton methods for nonlinear problems: affine invariance and
  adaptive algorithms}, volume~35.
\newblock Springer Science \& Business Media, 2011.

\bibitem[EL78]{eells1978report}
James Eells and Luc Lemaire.
\newblock A report on harmonic maps.
\newblock {\em Bulletin of the London mathematical society}, 10(1):1--68, 1978.

\bibitem[GLT89]{glowinski1989augmented}
Ronald Glowinski and Patrick Le~Tallec.
\newblock {\em Augmented Lagrangian and operator-splitting methods in nonlinear
  mechanics}.
\newblock SIAM, 1989.

\bibitem[HR14]{heinkenschloss2014matrix}
Matthias Heinkenschloss and Denis Ridzal.
\newblock A matrix-free trust-region {SQP} method for equality constrained
  optimization.
\newblock {\em SIAM Journal on Optimization}, 24(3):1507--1541, 2014.

\bibitem[HT04]{huper2004newton}
Knut Huper and Jochen Trumpf.
\newblock {N}ewton-like methods for numerical optimization on manifolds.
\newblock In {\em Signals, Systems and Computers, 2004. Conference Record of
  the Thirty-Eighth Asilomar Conference on}, volume~1, pages 136--139. IEEE,
  2004.

\bibitem[KVBP{\etalchar{+}}14]{kritsikis2014beyond}
Evaggelos Kritsikis, A~Vaysset, LD~Buda-Prejbeanu, Fran{\c{c}}ois Alouges, and
  J-C Toussaint.
\newblock Beyond first-order finite element schemes in micromagnetics.
\newblock {\em Journal of Computational Physics}, 256:357--366, 2014.

\bibitem[Lan01]{lang2001fundamentals}
S.~Lang.
\newblock {\em Fundamentals of Differential Geometry}.
\newblock Graduate Texts in Mathematics. Springer New York, 2001.

\bibitem[LB19]{BoumalLiu2019}
C.~Liu and N.~Boumal.
\newblock Simple algorithms for optimization on {R}iemannian manifolds with
  constraints.
\newblock {\em Applied Mathematics \& Optimiztion}, 2019.

\bibitem[LL89]{lin1989relaxation}
San-Yih Lin and Mitchell Luskin.
\newblock Relaxation methods for liquid crystal problems.
\newblock {\em SIAM Journal on Numerical Analysis}, 26(6):1310--1324, 1989.

\bibitem[LSW14]{lubkoll2014optimal}
Lars Lubkoll, Anton Schiela, and Martin Weiser.
\newblock An optimal control problem in polyconvex hyperelasticity.
\newblock {\em SIAM Journal on Control and Optimization}, 52(3):1403--1422,
  2014.

\bibitem[LSW17]{lubkoll2017affine}
Lars Lubkoll, Anton Schiela, and Martin Weiser.
\newblock An affine covariant composite step method for optimization with
  {PDE}s as equality constraints.
\newblock {\em Optimization Methods and Software}, 32(5):1132--1161, 2017.

\bibitem[LSZ11]{liu2011mathematical}
Wei Liu, Anuj Srivastava, and Jinfeng Zhang.
\newblock A mathematical framework for protein structure comparison.
\newblock {\em PLoS Computational Biology}, 7(2):e1001075, 2011.

\bibitem[Lue72]{luenberger1972gradient}
David~G Luenberger.
\newblock The gradient projection method along geodesics.
\newblock {\em Management Science}, 18(11):620--631, 1972.

\bibitem[Mar78]{maratos1978}
Nicolas Martos.
\newblock {\em Exact Penalty Function Algorithms for Finite Dimensional and
  Control Optimization Problems}.
\newblock PhD thesis, 1978.

\bibitem[Mie02]{mielke2002finite}
Alexander Mielke.
\newblock Finite elastoplasticity {L}ie groups and geodesics on sl (d).
\newblock In {\em Geometry, mechanics, and dynamics}, pages 61--90. Springer,
  2002.

\bibitem[MS04]{manton2004topological}
Nicholas Manton and Paul Sutcliffe.
\newblock {\em Topological solitons}.
\newblock Cambridge University Press, 2004.

\bibitem[NJ06]{Nocedal1999}
J.~Nocedal and Wright~S. J.
\newblock {\em Numerical Optimization}.
\newblock Springer, 2006.

\bibitem[Omo89]{Omojokun1990}
E.~O. Omojokun.
\newblock {\em Trust Region Algorithms for Optimization with Nonlinear Equality
  and Inequality Constraints}.
\newblock PhD thesis, Boulder, CO, USA, 1989.
\newblock UMI Order No: GAX89-23520.

\bibitem[PFA06]{pennec2006riemannian}
Xavier Pennec, Pierre Fillard, and Nicholas Ayache.
\newblock A {R}iemannian framework for tensor computing.
\newblock {\em International Journal of computer vision}, 66(1):41--66, 2006.

\bibitem[Pro95]{prost1995physics}
Jacques Prost.
\newblock {\em The physics of liquid crystals}, volume~83.
\newblock Oxford university press, 1995.

\bibitem[RW12]{ring2012optimization}
Wolfgang Ring and Benedikt Wirth.
\newblock Optimization methods on {R}iemannian manifolds and their application
  to shape space.
\newblock {\em SIAM Journal on Optimization}, 22(2):596--627, 2012.

\bibitem[Sch14]{schulz2014riemannian}
Volker Schulz.
\newblock A {R}iemannian view on shape optimization.
\newblock {\em Foundations of Computational Mathematics}, 14(3):483--501, 2014.

\bibitem[SS00]{shatah2000geometric}
Jalal M~Ihsan Shatah and Michael Struwe.
\newblock {\em Geometric wave equations}, volume~2.
\newblock American Mathematical Soc., 2000.

\bibitem[SSW15]{schulz2015towards}
Volker Schulz, Martin Siebenborn, and Kathrin Welker.
\newblock Towards a {L}agrange--{N}ewton approach for {PDE} constrained shape
  optimization.
\newblock In {\em New Trends in Shape Optimization}, pages 229--249. Springer,
  2015.

\bibitem[TSC00]{tang2000diffusion}
Bei Tang, Guillermo Sapiro, and Vicent Caselles.
\newblock Diffusion of general data on non-flat manifolds via harmonic maps
  theory: The direction diffusion case.
\newblock {\em International Journal of Computer Vision}, 36(2):149--161, 2000.

\bibitem[Var85]{Vardi1985}
A.~Vardi.
\newblock A trust region algorithm for equality constrained minimization:
  convergence properties and implementation.
\newblock {\em SIAM J. Numer. Anal.}, 22(3):575--591, 1985.

\bibitem[Zei86]{ZeidlerI}
E.~Zeidler.
\newblock {\em Nonlinear Functional Analysis and its Applications}, volume~I.
\newblock Springer, New York, 1986.

\end{thebibliography}

\end{document}